\numberwithin{equation}{section}
\newcommand{\ds}{\displaystyle}
\newcommand{\define}{\textbf}
\newcommand{\lap}{\Delta}
\newcommand{\mc}{\mathcal}
\newcommand{\constone}{{\textbf{1}}}
\newcommand{\ra}{\Rightarrow}
\newcommand{\vp}{\varphi}
\newcommand{\ab}[1]{\langle#1\rangle} 
\newcommand{\lrb}[1]{\left[#1\right]}
\newcommand{\lrp}[1]{\left(#1\right)}
\newcommand{\lrmod}[1]{\left|#1\right|}
\newcommand{\lrset}[1]{\left\{#1\right\}}
\newcommand{\norm}[1]{\|#1\|}
\newcommand{\set}[1]{\{#1\}}
\newcommand{\suppressthis}[1]{}
\newcommand{\reason}[2]{\stackrel{\text{#1}}{#2}}
\newcommand{\twopartdef}[4]
{
	\left\{
	\begin{array}{ll}
		#1 & \mbox{if } #2 \\
		#3 & \mbox{if } #4
	\end{array}
	\right.
}
\newcommand{\R}{\mathbf R}
\newcommand{\Z}{\mathbf Z}
\DeclareMathOperator{\im}{Im}
\DeclareMathOperator{\vol}{vol}
\newtheorem{theorem}{Theorem}[section]
\newtheorem{lemma}[theorem]{Lemma}
\newtheorem{example}[theorem]{Example}
\newtheorem{prop}[theorem]{Proposition}
\newtheorem{defn}[theorem]{Definition}
\newtheorem{definition}[theorem]{Definition}
\newtheorem{rmk}[theorem]{Remark}
\newcommand\FF{{\mathcal F}}
\newcommand\LL{{\mathcal L}}
\newcommand\MM{{\mathcal M}}
\newcommand\PP{{\mathcal P}}
\newcommand\PMF{{\PP\kern-2pt\MM\FF}}
\newcommand\PML{{\PP\kern-2pt\MM\LL}}
\newcommand\E{{\mathbb E}}
\DeclareMathOperator{\var}{Var}
\newcommand{\one}{\mathbf 1}
\begin{document}

\title{Cheeger inequalities for graph limits}

\author{Abhishek Khetan}
\address{School
	of Mathematics, Tata Institute of Fundamental Research. 1, Homi Bhabha Road, Mumbai-400005, India}

\email{khetan@math.tifr.res.in}

\author{Mahan Mj}
\address{School
	of Mathematics, Tata Institute of Fundamental Research. 1, Homi Bhabha Road, Mumbai-400005, India}

\email{mahan@math.tifr.res.in}
\email{mahan.mj@gmail.com}

\thanks{ Research of second author partly supported by a DST JC Bose Fellowship and partly by a MATRICS research project (FILE NO. MTR/2017/000005).}
\subjclass[2010]{05C40, 35P15,  05C75, 05C99,   58J50}
\keywords{graph limits, graphon, graphing, Cheeger constant}
\date{\today}

\begin{abstract}
We introduce  notions of Cheeger constants for graphons and graphings. We prove Cheeger and Buser inequalities for these. On the way we prove co-area formulae for graphons and graphings.
\end{abstract}

\maketitle

\tableofcontents
\section{Introduction} The Cheeger constant, introduced in Riemannian geometry by Cheeger \cite{cheeger-art} in the early 70's measures the `most efficient' way to cut a closed Riemannian manifold into two pieces, where efficiency is measured in terms of an isoperimetric constant.
Cheeger \cite{cheeger-art} and Buser related this geometric quantity to a spectral quantity--the bottom of the spectrum of the Laplacian. These are the well-known Cheeger-Buser inequalities in Riemannian geometry (see \cite[Section 8.3]{buser-book} for details).
A discrete version of the Cheeger constant and the Cheeger-Buser inequalities  was then obtained independently by Dodziuk \cite{dodziuk-cheeger} and Alon-Milman \cite{alon-cheeger,alon-milman} for finite graphs (see \cite{chung-cheeger} for a number of different proofs and \cite{mohar-survey1} for a survey). These ideas and inequalities have also been extended to weighted  graphs \cite{friedland_nabben_weighted_cheeger} (see also \cite[Chapter 2, pg. 24]{fan_chung_spectral_graph_theory}, \cite{trevisan}). In a certain sense, this marked a fertile way of discretizing a notion that arose in the setup of continuous geometry.

More recently,
the theory of graph limits, graphons and graphings was developed by Lovasz \cite{lovasz_large_networks} and others (see especially \cite{bclsv1,bclsv2,bckl,chatterjee-lnm}) giving a method of obtaining measured continua from infinite sequences of finite graphs. From a certain point of view, this gives us a path in the opposite direction: from the discrete to the continuous.

Such continuous limits come in two flavors: dense graphs (graphons) or
sparse graphs (graphings).
A graphon is relatively easy to describe: it is a bounded (Lebesgue) measurable function $W:I^2\to I$ that is symmetric: $W(x, y)=W(y, x)$ for all $x, y\in I$. A graphing on the other hand may be thought of as a measure on $I^2$ that can be  locally described as a product  of a sub-probability measure on $I$ with the counting measure on a set of uniformly bounded cardinality (see Sections \ref{prel-graphon} and \ref{prel-graphing} for details). Each of the co-ordinate intervals $I\times \{0\}$ and $\{0\}\times I$ may be thought of as the vertex set of the graphon or graphing and is equipped with a Borel measure.

The aim of this paper is to define the notion of a Cheeger constant for graphons and graphings and prove the Cheeger-Buser inequalities for them.
For both a graphon $W$ and a graphing $G$,  the Cheeger constants $h_W$ and $h_G$ respectively measure (as in Cheeger's original definition) the best way to partition the ``vertex set" $I$ into $A, A^c$ such that  the isoperimetric constant is minimized. For instance for a graphon $W$,
\begin{equation}
h_W= \inf_{A\subseteq I:\ 0< \mu_L(A)< 1}\frac{e_W(A, A^c)}{\min\set{\vol_W(A), \vol_W(A^c)}},
\end{equation}
where ${e_W(A, A^c)}$ measures the total measure of edges between $A, A^c$  (see Definitions \ref{def:cheeger-graphon} and \ref{def-graphing} below for details). A rather different   Cheeger-type inequality for graphings (but not for graphons) involving von Neumann algebras was explored by Elek in
\cite{elek-cheeger}.

The main theorem of the paper  is the following
(see Theorems \ref{buser-graphon}, \ref{cheeger-graphon}, \ref{buser-graphing} and \ref{cheeger-graphing}):

\begin{theorem}\label{intro}
	Let $W$ be a connected graphon and $\lambda_W$ denote the  bottom of the spectrum of the Laplacian.
	Then
	\begin{equation*}
	\frac{h^2_W}{8} \leq	\lambda_W\leq 2h_W.\\
	\end{equation*}

	Again, let $G$ be a connected graphing and $\lambda_G$ denote the bottom of the spectrum of the Laplacian.
	Then
	\begin{equation*}
	\frac{h^2_G}{8} \leq	\lambda_G\leq 2h_G.
	\end{equation*}
\end{theorem}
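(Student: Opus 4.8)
The plan is to prove each two–sided estimate by the classical route: a test–function argument (Buser) for the upper bound and a co–area/Cauchy--Schwarz argument (Cheeger) for the lower bound, carried out in the measured setting and using the co–area formulae established earlier in the paper. I will describe the graphon case in detail; the graphing case is formally identical once $\mu_L$, $e_W$, $\vol_W$ and the graphon Dirichlet form are replaced by their graphing counterparts, so I only indicate the changes at the end. Throughout I use the variational description $\lambda_W=\inf\{\langle\Delta f,f\rangle/\|f\|^2:\int_I f\,d\vol_W=0,\ f\not\equiv 0\}$, valid since $W$ is connected, so that the $0$–eigenspace of $\Delta$ is spanned by the constants.

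\textbf{Upper bound $\lambda_W\le 2h_W$ (Buser).} Given $A\subseteq I$ with $0<\mu_L(A)<1$, use the mean–zero test function $f_A=\one_A-\frac{\vol_W(A)}{\vol_W(I)}\one_I$. Additive constants do not affect the Dirichlet form, so $\langle\Delta f_A,f_A\rangle$ is an absolute multiple of $e_W(A,A^c)$, while a one–line computation gives $\|f_A\|^2=\vol_W(A)\vol_W(A^c)/\vol_W(I)$, and the elementary bound $\frac{ab}{a+b}\ge\frac12\min\{a,b\}$ turns this into $\ge\frac12\min\{\vol_W(A),\vol_W(A^c)\}$. Hence $\lambda_W\le\langle\Delta f_A,f_A\rangle/\|f_A\|^2\le 2\,e_W(A,A^c)/\min\{\vol_W(A),\vol_W(A^c)\}$, and taking the infimum over $A$ gives the bound. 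This direction has no real obstacle.

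\textbf{Lower bound $h_W^2/8\le\lambda_W$ (Cheeger).} Fix $\varepsilon>0$ and choose $g$ with $\int_I g\,d\vol_W=0$ and $\langle\Delta g,g\rangle<(\lambda_W+\varepsilon)\|g\|^2$. Let $m$ be a $\vol_W$–median of $g$, so that $\{g>m\}$ and $\{g<m\}$ each have $\vol_W$–measure $\le\frac12\vol_W(I)$, and set $\psi_\pm=(g-m)_\pm$, so $\psi_+\psi_-\equiv 0$ and $g-m=\psi_+-\psi_-$. A pointwise case analysis in the two variables (the cross term $[\psi_+(x)-\psi_+(y)][\psi_-(x)-\psi_-(y)]$ is $\le 0$) gives, after integrating against $W$, that $\langle\Delta\psi_+,\psi_+\rangle+\langle\Delta\psi_-,\psi_-\rangle\le\langle\Delta(g-m),g-m\rangle=\langle\Delta g,g\rangle<(\lambda_W+\varepsilon)\|g\|^2\le(\lambda_W+\varepsilon)(\|\psi_+\|^2+\|\psi_-\|^2)$, the last step using $\int_I g\,d\vol_W=0$. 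Hence one of $\psi_+,\psi_-$, call it $\psi$, is nonnegative, supported on a set of $\vol_W$–measure $\le\frac12\vol_W(I)$, and has Rayleigh quotient $<\lambda_W+\varepsilon$. Now apply the co–area formula to $\psi^2$: every super–level set $\{\psi^2>t\}=\{\psi>\sqrt{t}\}$ lies in the support of $\psi$, so by definition of $h_W$ it satisfies $e_W(\{\psi^2>t\},\{\psi^2\le t\})\ge h_W\,\vol_W\{\psi^2>t\}$; integrating in $t$ and using $\int_0^\infty\vol_W\{\psi^2>t\}\,dt=\|\psi\|^2$ yields $\iint_{I^2}W(x,y)\,|\psi(x)^2-\psi(y)^2|\,dx\,dy\ge c\,h_W\|\psi\|^2$ for an absolute constant $c>0$. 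Finally factor $\psi(x)^2-\psi(y)^2=(\psi(x)-\psi(y))(\psi(x)+\psi(y))$, apply Cauchy--Schwarz, and use $\iint W(\psi(x)+\psi(y))^2\le 2\iint W(\psi(x)^2+\psi(y)^2)=4\|\psi\|^2$ together with $\iint W(\psi(x)-\psi(y))^2\simmult\langle\Delta\psi,\psi\rangle$; this gives $h_W^2\le C\,\langle\Delta\psi,\psi\rangle/\|\psi\|^2<C(\lambda_W+\varepsilon)$ for an absolute constant $C$, and letting $\varepsilon\to 0$ finishes. Tracking the constants in the normalizations fixed earlier gives $C\le 8$ (in fact the choices above leave room to spare, yielding $C=2$).

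\textbf{The graphing case and the main obstacle.} For a graphing $G$ on $(I,\mu)$ the same three steps go through verbatim with $\mu$ in place of $\mu_L$, with $\vol_G$ and $e_G$ defined through the edge measure of $G$, and with the Dirichlet form $\langle\Delta f,f\rangle=\frac12\int(f(x)-f(y))^2\,d\eta$ taken over the edge set; the only places the graphing structure is used are the symmetry of the edge measure (which legitimizes the Fubini and $x\leftrightarrow y$ symmetrizations hidden in the co–area identity) and the graphing co–area formula, both of which are available from the earlier sections. The main obstacle, in both cases, is the Cheeger direction: combining the co–area formula with the level–set isoperimetric inequality and the squaring/Cauchy--Schwarz trick rigorously in the measured setting --- coping with null sets, with possibly degenerate level sets, and with the fact that $\lambda_W$ (resp.\ $\lambda_G$) need not be an eigenvalue, which forces one to argue with functions of nearly optimal Rayleigh quotient rather than with a genuine eigenfunction --- and then bookkeeping the numerical constants. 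The Buser direction is, by contrast, a single test–function computation.
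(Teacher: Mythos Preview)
Your proposal is correct and follows the same overall strategy as the paper: a test-function computation for Buser (your $f_A$ is a scalar multiple of the paper's $\frac{1}{\vol_W(A)}\one_A - \frac{1}{\vol_W(A^c)}\one_{A^c}$), and the median-shift / co-area / Cauchy--Schwarz route for Cheeger, with the graphing case handled by the formal substitution you describe.

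There is one organizational difference worth noting in the Cheeger direction. After splitting $g-m=\psi_+-\psi_-$, you use pigeonhole on $\langle\Delta\psi_+,\psi_+\rangle+\langle\Delta\psi_-,\psi_-\rangle\le(\lambda_W+\varepsilon)(\|\psi_+\|^2+\|\psi_-\|^2)$ to select a single $\psi\in\{\psi_+,\psi_-\}$ with small Rayleigh quotient, and then run the co-area and Cauchy--Schwarz argument on that one nonnegative function. The paper instead keeps both $f_+$ and $f_-$ throughout, applies Cauchy--Schwarz to each separately, and only recombines at the end via $a^2+b^2\ge\frac12(a+b)^2$. Your route is the more standard one and, as you correctly note, yields the sharper bound $h_W^2/2\le\lambda_W$; the paper's recombination step is precisely what costs the extra factor of $4$ and produces $h_W^2/8$. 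A second minor difference: the paper first reduces to $L^\infty$ test functions before taking the median, while you work directly with $L^2$ functions of nearly optimal Rayleigh quotient; both work, since a $\vol_W$-median exists for any measurable $g$.
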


Connectedness in the hypothesis of Theorem \ref{intro} above is a mild technical restriction to ensure that the Cheeger constant is well-defined.\\

\noindent{\bf Finite graphs versus graph limits:} The classical Cheeger-Buser inequalities for  finite graphs can be obtained (modulo a factor of 4) as an immediate consequence of Theorem \ref{intro} for graphings using the following canonical graphing that corresponds to a finite  graph.
For any finite connected graph $F$ on $\set{1 , \ldots, n}$ as the vertex set, we can define a graphing $G=(I, E, \mu)$ as follows: Let $v_i=(2i-1)/2n$ for $1\leq i\leq n$.
Define $E$ as
$$
	E=\set{(v_i, v_j):\ \set{i, j} \text{ is an edge in } F}
$$
Define $\mu$ as $\mu(v_i)=1/n$ for each $i$.
Thus $\mu(B)=0$ for all Borel sets which do not contain any of the $v_i$'s.
It is easy to check that $G$ is connected, and the Cheeger constant of $G$ and the Cheeger constant for $F$ are equal.
The same is true for $\lambda_G$ and $\lambda_F$.
So we get
$$
	\frac{h^2_F}{8}\leq \lambda_F\leq 2 h_F
$$
as a special case of Theorem \ref{intro} for graphings.

However, the situation becomes more interesting when we use graphons rather than graphings in the above. 
Indeed, any graph $F$ on $\set{1 , \ldots, n}$ naturally gives rise to a graphon $W$ by writing $W(x, y)=1$ if there is an edge between the vertices $\lceil nx\rceil $ and $\lceil ny\rceil$, and $0$ otherwise.
Clearly, $h_W\leq h_F$.
So a natural question is to ask for lower bounds on $h_W/h_F$.
We investigate this in Section \ref{section:cheeger constant of a graph versus a graphon}.
In particular, we obtain the result that if $F$ is any regular connected graph on $n$ vertices, and $W$ is the graphon arising from $F$, then
\begin{equation}
	h_W/h_F \geq (1-\varepsilon)\lrp{1- \frac{2}{n\varepsilon^2}}
\end{equation}
for all $0<\varepsilon<1$.
So if $n$ is large then the two Cheeger constants are close.
The proof of this assertion is probabilistic in nature.

In Section \ref{section:comparing the bottom of the spectrum} we see that the smallest nonzero eigenvalue of the (normalized) Laplacian of $F$ is same as the bottom of the spectrum of the Laplacian for $W$.
So for regular graphs one can recover Cheeger type inequalities from the Cheeger inequalities for graphons.\\

\noindent{\bf Formalism of differential forms:}
We have stated Theorem \ref{intro}  in the form above to demonstrate the fact that the statements for graphons and graphings are essentially identical. In fact, once the preliminaries about graphons and graphings are dealt with in Section \ref{prel}, the proof of Theorem \ref{intro} in the two cases of graphons and graphings follows essentially the same formal route.
Thus, though structurally graphons and graphings are quite dissimilar, the proofs of the Cheeger-Buser inequalities have striking parallels.
This is quite unlike some of the other spectral theorems exposed in \cite{lovasz_large_networks} (see in particular the differences in approach in \cite{bclsv1,bclsv2,bckl}).

 To emphasize this formal similarity of proof-strategy in the two cases, Sections \ref{graphon} and \ref{graphing} have been structured in an identical manner. In both cases,
we use the formalism and language of differential forms and define the Laplacian $\Delta = d^\ast d$ on functions after proving that the ``exterior derivative" $d$ is continuous. This is adequate for Buser's inequality (Theorems \ref{buser-graphon} and \ref{buser-graphing}).
The proof of the Cheeger's inequality part of Theorem \ref{intro} we then furnish (Theorems \ref{cheeger-graphon} and \ref{cheeger-graphing})  adapts   Cheeger's original idea from \cite{cheeger-art}. Thus, we prove {\bf co-area formulae} in the two settings of graphons and graphings (see Theorems \ref{coarea-graphon} and \ref{coarea-graphing}). This might be of independent interest.\\

\noindent{\bf Connectivity:}
Finally, in Section \ref{section:cheeger constant and connectedness} we investigate  connectivity.
For a finite graph $F$ it is clear that the Cheeger constant of $F$ is positive if and only if $F$ is connected. This is equivalent, via the Cheeger-Buser inequality for finite graphs, to the statement that a graph is connected if and only if the normalized Laplacian has a one dimensional eigenspace corresponding to the zero (lowest) eigenvalue.
The analogous statement is not true for either graphons or graphings. We furnish counterexamples  in Sections \ref{sec:connectedzerocheegergraphon} and  \ref{subsection:connected graphing with vanishing cheeger constant} respectively.
However, for graphons whose degree is bounded away from zero, we prove the following equivalence (see Proposition \ref{connectedpositivecheeger}):

\begin{prop}
	Let $\varepsilon>0$ and $W$ be a graphon such that $d_W(x) \geq \varepsilon$ for all $x \in I$.
	Then $W$ is connected if and only if $h_W > 0$.
\end{prop}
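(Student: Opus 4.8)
The reverse implication is straightforward: if $W$ is disconnected, pick a measurable $A\subseteq I$ with $0<\mu_L(A)<1$ and $W=0$ a.e.\ on $A\times A^c$; then $e_W(A,A^c)=0$, while $d_W\geq\varepsilon$ gives $\vol_W(A)\geq\varepsilon\mu_L(A)>0$ and $\vol_W(A^c)\geq\varepsilon\mu_L(A^c)>0$, so $A$ is an admissible competitor in the infimum defining $h_W$ and forces $h_W=0$. The content of the proposition is therefore the implication $h_W=0\Rightarrow W$ disconnected (equivalently, in view of Theorem~\ref{intro}, $\lambda_W=0\Rightarrow W$ disconnected). The plan is to start from a minimizing sequence for $h_W$, pass to a weak limit to obtain a ``fractional'' partition, and then use the degree bound to upgrade it to an honest disconnecting set.

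So assume $h_W=0$ and fix measurable sets $A_n$ with $0<\mu_L(A_n)<1$ and $e_W(A_n,A_n^c)/\min\{\vol_W(A_n),\vol_W(A_n^c)\}\to 0$; put $a_n=\mu_L(A_n)$ and, swapping $A_n\leftrightarrow A_n^c$ if needed, assume $a_n\leq 1/2$. First I would show the Lebesgue measures cannot degenerate. If $a_n\to 0$ along a subsequence then $\vol_W(A_n)\leq a_n\to 0$ while $\vol_W(A_n^c)\geq\varepsilon/2$, so the denominator equals $\vol_W(A_n)\leq a_n$; but $e_W(A_n,A_n^c)=\vol_W(A_n)-\int_{A_n}\!\int_{A_n}W\geq\varepsilon a_n-a_n^2$, so the ratio is $\geq\varepsilon-a_n\to\varepsilon>0$, a contradiction. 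Applying the same argument to $A_n^c$ gives $\limsup a_n<1$, so after passing to a subsequence $a_n\to a\in(0,1)$, and then $\min\{\vol_W(A_n),\vol_W(A_n^c)\}\geq\varepsilon\min\{a_n,1-a_n\}$ is bounded below, whence $e_W(A_n,A_n^c)\to 0$.

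Next I would pass to the limit. The indicators $\one_{A_n}$ lie in $K=\{g\in L^2(I):0\leq g\leq 1\text{ a.e.}\}$, which is convex, bounded and weakly closed, hence weakly sequentially compact; pass to a further subsequence with $\one_{A_n}\rightharpoonup g$ weakly in $L^2(I)$, so $g\in K$ and $\int_I g=\lim a_n=a\in(0,1)$. Let $(T_Wf)(x)=\int_I W(x,y)f(y)\,dy$; since $W$ is bounded, $T_W$ is a Hilbert--Schmidt, hence compact, self-adjoint operator on $L^2(I)$. Writing $e_W(A_n,A_n^c)=\langle\one_{A_n},T_W\one\rangle-\langle\one_{A_n},T_W\one_{A_n}\rangle$ and using that $T_W\one_{A_n}\to T_Wg$ strongly (compactness) together with $\one_{A_n}\rightharpoonup g$, both terms converge, giving
\begin{equation*}
\int_{I^2}W(x,y)\,g(x)\,(1-g(y))\,dx\,dy=0 .
\end{equation*}
By symmetry of $W$ the same holds with $g(x)$ and $g(y)$ interchanged; adding the two and noting that $W(x,y)\bigl[g(x)(1-g(y))+g(y)(1-g(x))\bigr]$ is a nonnegative integrand, it must vanish a.e., i.e.\ for a.e.\ $(x,y)$ either $W(x,y)=0$ or $g(x)=g(y)\in\{0,1\}$.

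Finally I would invoke the degree hypothesis. Let $M=\{x\in I:0<g(x)<1\}$. For a.e.\ $x\in M$ the dichotomy forces $W(x,\cdot)=0$ a.e.\ and hence $d_W(x)=0$; since $d_W\geq\varepsilon$ everywhere, $\mu_L(M)=0$, so $g=\one_A$ a.e.\ with $A=\{g=1\}$ and $\mu_L(A)=a\in(0,1)$. The displayed identity then reads $\int_A\!\int_{A^c}W(x,y)\,dx\,dy=0$, i.e.\ $W=0$ a.e.\ on $A\times A^c$ with $0<\mu_L(A)<1$, contradicting the connectedness of $W$; hence $h_W>0$. I expect the main obstacle to be precisely the passage to the limit in the \emph{bilinear} functional $e_W(A_n,A_n^c)$, which is not continuous for weak $L^2$ convergence of the indicators by itself and relies essentially on the compactness of $T_W$ (equivalently, $W\in L^2(I^2)$); the subsequent step, where the ``fractional'' limit $g$ is promoted to a genuine partition, is exactly where the degree bound is indispensable, and the examples of Section~\ref{sec:connectedzerocheegergraphon} show it cannot be removed.
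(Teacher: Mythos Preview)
Your proof is correct and provides a genuinely third route, different from both proofs in the paper.

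The paper's first proof is spectral: it shows $(1/d_W)T_W$ is compact on $L^2(I,\nu)$, uses this to upgrade $\lambda_W$ from an approximate eigenvalue of $\Delta_W$ to an honest eigenvalue, and then invokes the Buser inequality (Theorem~\ref{buser-graphon}) to conclude that $h_W=0$ would produce a nonconstant function in $\ker\Delta_W$, contradicting connectedness. The paper's second proof, like yours, starts from a minimizing sequence $A_n$ and disposes of the case $\mu_L(A_n)\to 0$ by exactly your computation; but for the remaining case $\mu_L(A_n)\to a\in(0,1)$ it simply quotes Lemma~7 of \cite{bbcr2010} (a connected graphon admits no $(a,b)$-cut for small enough $b$) as a black box.

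Your argument replaces that black box by a direct weak-compactness argument: you pass from $\one_{A_n}$ to a weak limit $g$, use compactness of $T_W$ (the same ingredient as in the paper's first proof, but deployed differently) to carry the bilinear functional $e_W(A_n,A_n^c)$ through the limit, and then use the degree bound to force the fractional partition $g$ to be a genuine indicator. In effect you give a self-contained proof of the relevant case of the \cite{bbcr2010} lemma via operator compactness rather than combinatorics, and you avoid the Cheeger--Buser machinery entirely. Two cosmetic points: since you already arranged $a_n\le 1/2$, the sentence ``applying the same argument to $A_n^c$ gives $\limsup a_n<1$'' is redundant; and the step $\langle\one_{A_n},T_W\one_{A_n}\rangle\to\langle g,T_Wg\rangle$ (weak $\times$ strong convergence in the inner product) is the one place a reader might want one more line of justification, though it is routine.
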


We provide two proofs of this theorem, one of which uses the Cheeger-Buser inequality for graphons from Theorem \ref{intro} and the other  a structural lemma about connected graphons proved in \cite{bbcr2010}.

\subsection{Proof strategy and relation with existing literature} It is natural to try to prove the Cheeger-Buser inequalities for graphons by approximating a given graphon $W$ by a sequence of finite graphs in the cut norm and then prove that the Cheeger constants of the sequence of graphs converge to the Cheeger constant of the graphon.
However, the convergence of Cheeger constants turns out to be a subtle issue, and in general it is not true that convergence in the cut norm implies convergence of Cheeger constants (See Section \ref{subsection:convergence of cheeger constants} for a counterexample).

Thus to prove the desired inequalities we resort to techniques motivated and informed by geometry and differential topology rather than combinatorial methods. We go back to Cheeger's original proof in the context of Riemannian geometry \cite[Theorem 3, Chapter IV]{chavel}, which we
 outline for completeness:
Let $M$ be a compact Riemannian manifold without boundary.
Let $dV$ denote the Riemannian volume form, $h_M$ denote the Cheeger constant, and $\lambda$ denote the bottom of the spectrum of the Laplacian of $M$.
\begin{enumerate}
	\item Let $g:M\to \R$ be an arbitrary smooth map with $\int_M g^2\ dV=1$ and $\int_M g\ dV=0$. The
	goal is to show that $\int_M |\nabla g|^2\ dV\geq h^2_M/4$
	\item Translate $g$ to get a map $f$ such that the volumes of $M_+:=\set{f\leq 0}$ and $M_-:=\set{f\geq 0}$ are the same.
	Note that $\int_M|\nabla g|^2\ dV\geq \int_M|\nabla f|^2\ dV$, so it suffices to show that the latter dominates $h^2_M/4$.
	\item Use the Cauchy-Schwarz inequality to get
	\begin{equation*}
	\int_M |\nabla f|^2\ dV\geq \frac{1}{4} \lrb{\int_{M^+}|\nabla f^2|\ dV + \int_{M_+} |\nabla f^2|\ dV}^2
	\end{equation*}
	\item Use the Co-area formula to write $\int_{M_\pm}|\nabla f^2|\ dV$ as integrals of the areas of fibers (slices of $M$) of $f^2$, and then use the definition of the Cheeger constant to finish.
\end{enumerate}
We adapt the above proof  to the case of graph limits by developing a suitable co-area formula with `volume' of a subset $A$ of $I$ meaning the `sum of degrees of the vertices in $A$' and the `area' of a slice being the `number of edges crossing the slice.' In the process we outline the more {\bf geometric content} of graph limits by explicitly describing the differential operators $d$ (an analog of the gradient operator) and the adjoint $d^\ast$ (an analog of the divergence operator).  An implicit and partial aim of this paper is to make at least some parts of the beautiful theory of graph limits accessible to a geometrically inclined audience. We have therefore provided some of the arguments in complete (possibly painful!) detail.

The proof we give for the Cheeger inequality also has some philosophical similarity with  the combinatorial proof of \cite[Thereom 2.2]{fan_chung_spectral_graph_theory}. However the techniques do not  quite apply here.
The proof there starts with 
 an eigenfunction of the Laplacian and uses a reordering of vertices, neither of which can be done  in our situation. These technical difficulties are circumvented by using the co-area formula.

\section{Preliminaries}\label{prel} In this section, we summarize general facts about graphons and graphings that we shall need in the paper. Most of the material is from the book by Lovasz \cite{lovasz_large_networks}, but in the subsection on graphings below we deduce a few elementary consequences as well as a slightly different perspective from that in \cite{lovasz_large_networks}.
\subsection{Preliminaries on graphons}\label{prel-graphon}
We summarize the relevant material from  \cite[Chapter 7]{lovasz_large_networks}.
Let $I$ denote the unit interval $[0, 1]$ and $\mu_L$ denote the Lebesgue measure on $I$.
A function $W:I^2\to I$ is said to be a \define{graphon} if $W$ is measurable and symmetric, that is, $W(x, y)=W(y, x)$ for all $x, y\in I$.
Given a graphon $W$, we define for each $x\in I$ the \define{degree} of $x$ as
\begin{equation}
d_W(x) = \int_0^1 W(x, y)\ dy
\end{equation}
A graphon $W$ is said to be \define{regular} if $d_W$ is constant $\mu_L$-a.e.
For two measurable subsets $A$ and $B$ of $I$, we define
\begin{equation}
e_W(A, B) = \int_{A\times B} W
\end{equation}
Thus, $e_W(A, B)$ is the total weight of edges between $A$ and $B$.
For a measurable subset $A$ of $I$, the \define{volume} of $W$ over $A$ is defined as
\begin{equation}
\vol_W(A) = \int_{A\times I} W = e_W(A, I)
\end{equation}
Thus, $\vol_W(A)$ measures the total weight of edges emanating from $A$.

A graphon is said to be \define{connected} if for all measurable subsets $A$ of $I$ with $0< \mu_L(A)< 1$ we have $e_W(A, A^c) \neq 0$.
Note that if $W$ is connected then $d_W>0$ a.e.

\subsection{Preliminaries on graphings}\label{prel-graphing}

Let $I$ denote the unit interval $[0, 1]$.
\begin{defn}\cite[Chapter 18]{lovasz_large_networks}
A \define{bounded degree Borel graph} on $I$ is a pair $(I, E)$, where $E$ is a symmetric measurable subset of $I^2$ such that there is a positive integer $D$ satisfying
\begin{equation}
|\set{y\in I:\ (x, y)\in E}|\leq D
\end{equation}
for all $x\in I$.
\end{defn}
In other words, the number of neighbors of each point in $I$ is at most $D$.
Given a bounded degree Borel graph $(I, E)$, we have a \define{degree function} $\deg:I\to \R$ defined as
\begin{equation}
\deg(x) = |\set{y\in I:\ (x, y)\in E}|
\end{equation}
For any measurable subset $A$ of $I$ we define $\deg_A:I\to \R$ as
\begin{equation}
\deg_A(x) = |\set{y\in A:\ (x, y)\in E}|
\end{equation}
It is proved in \cite[Lemma 18.4]{lovasz_large_networks} that the map $\deg_A$ is a measurable function for any measurable set $A\subseteq I$.
Note that $\deg$ is nothing but $\deg_I$.

\begin{defn}\cite[Chapter 18]{lovasz_large_networks}
A \define{graphing} is a triple $G=(I, \mu, E)$ such that $(I, E)$ is a bounded degree Borel graph, and $\mu$ is a probability measure on $I$ such that
\begin{equation}
\label{fubini condition of a graphing}
\int_A\deg_B(x)\ d\mu(x) = \int_B\deg_A(x)\ d\mu(x)
\end{equation}
for all measurable subsets $A$ and $B$ of $I$.
\label{def-graphing}
\end{defn}
Given a graphing $G=(I, \mu, E)$, the measure $\mu$ allows us to define a measure $\eta$ on $I^2$ as follows.
For each measurable rectangle $A\times B\subseteq I^2$, we define $$\eta(A\times B)=\int_A\deg_B(x) \ d\mu(x).$$
Equation \ref{fubini condition of a graphing} ensures that $\eta(A\times B)=\eta(B\times A)$.
By Caratheodory extension, we get a measure $\eta$ on the Borel $\sigma$-algebra of $I^2$.
As proved in \cite[Lemma 18.14]{lovasz_large_networks}, the measure $\eta$ is concentrated on $E$.

A fundamental result proved in \cite[Theorem 18.21]{lovasz_large_networks} is that  every graphing can be decomposed as a disjoint union of finitely many graphings, each having degree $\deg(x)$ bounded by 1 for all $x$.
More precisely,

\begin{theorem}
	\label{strand theorem}
	Let $G=(I, \mu, E)$ be a graphing.
	Then there exist measurable subsets $A_1 , \ldots, A_k\subseteq I$ and $\mu$-measure preserving involutions $\vp_i:A_i\to A_i$ such that
	\begin{equation}
	E=\bigsqcup_{i=1}^k \set{(x, \vp_i(x)):\ x\in A_i}
	\end{equation}
\end{theorem}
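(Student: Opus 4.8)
The plan is to obtain the decomposition from a measurable proper edge coloring of the underlying bounded degree Borel graph, together with the symmetry of the edge measure $\eta$ associated to $G$.

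First I would reduce to a coloring statement. Suppose $E$ can be written as a disjoint union $E=\bigsqcup_{i=1}^{k}E_i$ of measurable sets, each $E_i$ being symmetric and a \emph{matching}: for every $x\in I$ there is at most one $y$ with $(x,y)\in E_i$. Put $A_i:=\set{x\in I:\text{there exists }y\text{ with }(x,y)\in E_i}$, a measurable subset of $I$, and define $\vp_i:A_i\to I$ by letting $\vp_i(x)$ be the unique such $y$. Symmetry of $E_i$ forces $\vp_i(A_i)\subseteq A_i$ and $\vp_i\circ\vp_i=\mathrm{id}_{A_i}$, so $\vp_i$ is an involution of $A_i$; its graph is exactly $E_i$, hence $\vp_i$ is measurable; and $E=\bigsqcup_{i=1}^{k}\set{(x,\vp_i(x)):x\in A_i}$ by construction. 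Two things then remain: (a) that such a finite measurable partition into matchings exists, and (b) that each $\vp_i$ is $\mu$-measure preserving.

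For (b) I would use the measure $\eta$ on $I^2$ attached to $G$. From $\eta(A\times B)=\eta(B\times A)$ on measurable rectangles, a monotone class argument gives $\eta(S)=\eta(S^{\mathsf T})$ for every Borel $S\subseteq I^2$, where $S^{\mathsf T}$ is the image of $S$ under $(x,y)\mapsto(y,x)$. Fix $i$; since $E_i$ is symmetric, the restriction $\eta_i(\,\cdot\,):=\eta(\,\cdot\,\cap E_i)$ is again symmetric. Now $\eta$ disintegrates over its first coordinate into the counting measures on the neighbourhoods, so for measurable $B$ one computes $\eta_i(A_i\times B)=\int_{A_i}\one[\vp_i(x)\in B]\,d\mu(x)=\mu\lrp{A_i\cap\vp_i^{-1}(B)}$ and, using $\vp_i(A_i)=A_i$, $\eta_i(B\times A_i)=\mu(A_i\cap B)$. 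Since $(A_i\times B)^{\mathsf T}=B\times A_i$, symmetry of $\eta_i$ gives $\mu\lrp{A_i\cap\vp_i^{-1}(B)}=\mu(A_i\cap B)$ for all measurable $B$; as $\vp_i^{-1}(B)\subseteq A_i$, this says precisely that $\vp_i:A_i\to A_i$ preserves $\mu$. (Equivalently one can run this directly from the defining identity \eqref{fubini condition of a graphing} applied to the subgraph $E_i$.)

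The real content is (a): partitioning $E$ into finitely many measurable symmetric matchings, i.e. producing a measurable proper edge coloring of $(I,E)$ with finitely many colors. This is exactly where the degree bound $D$ enters. I would pass to the line graph of $(I,E)$ --- the Borel graph on the standard Borel space of edges in which two edges are adjacent when they share an endpoint --- which has degree at most $2(D-1)$, and then invoke the fact that a bounded degree Borel graph of degree $\le d$ admits a Borel proper vertex coloring with $d+1$ colors. Pulling such a coloring back to edges and symmetrizing over the flip $(x,y)\mapsto(y,x)$ produces $E=\bigsqcup_{i=1}^{k}E_i$ with $k\le 2D-1$ (one could instead aim for $k\le D+1$ via a measurable form of Vizing's theorem, but any finite $k$ suffices). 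If $E$ is only Lebesgue measurable one first replaces it by a Borel set differing from it on an $\eta$-null set, which changes nothing up to null sets. I expect step (a) to be the main obstacle: part (b) and the passage from a coloring to involutions are essentially bookkeeping, whereas the existence of a finite measurable edge coloring rests on a genuinely nontrivial selection argument for bounded degree Borel graphs.
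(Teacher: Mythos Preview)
The paper does not supply its own proof of this result; it is quoted from \cite[Theorem 18.21]{lovasz_large_networks}. Your outline is essentially the standard argument one finds there: obtain a Borel proper edge coloring of $(I,E)$ (via the Kechris--Solecki--Todorcevic $(d{+}1)$-coloring theorem applied to the line graph, which has degree at most $2(D-1)$), and then verify that each resulting Borel involution preserves $\mu$ using the symmetry of $\eta$. So the proposal is correct and matches the approach of the cited source.

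Two small points of care. First, the disintegration you invoke in step (b)---that $\eta(S)=\int_I \sum_y\chi_{E\cap S}(x,y)\,d\mu(x)$ for every Borel $S$---is exactly Lemma~\ref{what eta really is} of the present paper, and the paper proves that lemma \emph{using} Theorem~\ref{strand theorem}. To keep your argument non-circular within this paper's logical order, observe that the formula already follows from your step (a) alone (only the Borel decomposition, not the measure-preserving part): once $E=\bigsqcup_i\{(x,\vp_i(x)):x\in A_i\}$ as Borel sets, the right-hand side becomes $\sum_i\int_{A_i}\chi_S(x,\vp_i(x))\,d\mu(x)$, which is visibly a measure in $S$ and agrees with $\eta$ on rectangles, hence everywhere. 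Second, your parenthetical that ``equivalently one can run this directly from \eqref{fubini condition of a graphing} applied to the subgraph $E_i$'' is not an alternative route: that $(I,\mu,E_i)$ satisfies the graphing identity is precisely the assertion that $\vp_i$ is $\mu$-preserving, which is what you are trying to prove. Drop that remark and the argument is clean.
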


We can pictorially represent a graphing $G=(I, \mu, E)$ by drawing the edge set $E$ of $G$ in the unit square.
Each subset $\set{(x, \vp_i(x)):\ x\in A_i}$ can be thought of as a ``strand" in $I^2$.
Thus the previous theorem allows us to think of a graphing as a disjoint union of strands in the unit square.
When the degree bound of a graphing is $1$, we may say that the graphing consists of a \emph{single strand}.

The measure $\eta$ counts the number of edges in any measurable subset of $S\subseteq I^2$.
When $S=A\times B$ is a rectangle, we count the number of strands in $S$ each vertical line cuts, and integrate this count against $d\mu$.
This is immediate from the definition of $\eta$.
This extends to arbitrary $S$, as the following lemma shows.

\begin{lemma}
	\label{what eta really is}
	Let $S$ be any measurable subset of $I^2$.
	Then
	\begin{equation}
	\label{expression of eta in terms of mu}
	\eta(S) = \int_I\sum_y\chi_{E\cap S}(x, y) \ d\mu(x)
	\end{equation}
\end{lemma}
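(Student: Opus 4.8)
The plan is to verify the identity first on measurable rectangles, then upgrade to arbitrary measurable $S\subseteq I^2$ by a standard monotone-class / Dynkin-system argument. Define the set function
\[
\nu(S) = \int_I \sum_y \chi_{E\cap S}(x, y)\ d\mu(x),
\]
where the inner sum is over the (at most $D$) points $y$ with $(x,y)\in E$; this sum is finite for every $x$ because $(I,E)$ has bounded degree. First I would check that $\nu$ is a well-defined measure on the Borel $\sigma$-algebra of $I^2$: the integrand $x\mapsto \sum_y \chi_{E\cap S}(x,y)$ is measurable (this is essentially the content of \cite[Lemma 18.4]{lovasz_large_networks}, applied to suitable slices, since $\sum_y \chi_{E\cap S}(x,y) = \deg_{S_x}$-type counts where $S_x$ is the $x$-slice), non-negative, and countable additivity of $\nu$ follows from the monotone convergence theorem applied inside the integral.

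Next I would show $\nu$ agrees with $\eta$ on rectangles. For $S = A\times B$ we have $\chi_{E\cap S}(x,y) = \chi_A(x)\chi_B(y)\chi_E(x,y)$, so $\sum_y \chi_{E\cap S}(x,y) = \chi_A(x)\sum_{y\in B}\chi_E(x,y) = \chi_A(x)\deg_B(x)$, and integrating against $\mu$ gives exactly $\int_A \deg_B(x)\ d\mu(x) = \eta(A\times B)$ by the definition of $\eta$. Since measurable rectangles form a $\pi$-system generating the Borel $\sigma$-algebra of $I^2$, and both $\eta$ and $\nu$ are finite measures (both are bounded by $D\cdot \mu(I) = D$, since $\sum_y\chi_{E\cap S}(x,y)\le \deg(x)\le D$) that agree on this $\pi$-system, the $\pi$--$\lambda$ theorem forces $\eta = \nu$ on all Borel sets, which is the claimed equality.

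The main obstacle I anticipate is the measurability of $x\mapsto \sum_y\chi_{E\cap S}(x,y)$ for a general measurable $S$, rather than just for rectangles. For rectangles it reduces to measurability of $\deg_B$, already cited; for general $S$ one wants to say that the $x$-slice $S_x = \{y : (x,y)\in S\}$ varies measurably and that $x\mapsto \deg_{S_x}(x) := |\{y\in S_x : (x,y)\in E\}|$ is measurable. The cleanest route is to observe that this counting function is itself a measure-theoretic object: for fixed $S$, one can use the strand decomposition (Theorem \ref{strand theorem}) to write $\sum_y\chi_{E\cap S}(x,y) = \sum_{i=1}^k \chi_{A_i}(x)\chi_S(x,\vp_i(x))$, and each summand is measurable because $\vp_i$ is a measurable involution of $A_i$ and hence $x\mapsto (x,\vp_i(x))$ is a measurable map $A_i\to I^2$. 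This also makes countable additivity of $\nu$ and the bound $\nu\le D$ transparent, so routing the whole argument through the strand decomposition is likely the most economical presentation; I would adopt that and relegate the $\pi$--$\lambda$ verification to a sentence.
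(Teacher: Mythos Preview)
Your proposal is correct and matches the paper's proof essentially line for line: the paper likewise routes measurability of $x\mapsto\sum_y\chi_{E\cap S}(x,y)$ through the strand decomposition (Theorem \ref{strand theorem}), verifies countable additivity of $\nu$ by interchanging a limit with the integral, checks $\nu=\eta$ on rectangles, and concludes by uniqueness of the Carath\'eodory extension (your $\pi$--$\lambda$ argument is the same mechanism).
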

\begin{proof}
	First let us see why the integral on the RHS makes sense.
	Using Theorem \ref{strand theorem} we know that there exist $\mu$-measure preserving involutions $\vp_i:A_i\to A_i$, $i=1 , \ldots, k$, for some measurable subsets $A_i$ of $I$, such that
	\begin{equation}
	E=\bigsqcup_{i=1}^k \set{(x, \vp_i(x)):\ x\in A_i}
	\end{equation}

	Hence,
	\begin{equation}
	\sum_{y\in I} \chi_{E\cap S}(x, y) = \sum_{i=1}^k \chi_{S}(x, \vp_i(x))
	\end{equation}
	Thus the integrand in the RHS of Equation \ref{expression of eta in terms of mu} is a sum of finitely many non-negative measurable functions $I\to \R$ and therefore the RHS of Equation \ref{expression of eta in terms of mu} is well-defined.

	Let $\nu(S)$ be the RHS of Equation \ref{what eta really is}.
	Let us verify that $\nu$ is a measure on the Borel $\sigma$-algebra of $I^2$.
	So let $S=\bigsqcup_{j=1}^\infty S_j$ be a countable disjoint union of measurable sets.
	Then
	\begin{align}
	\begin{split}
	\chi_S(x, \vp_i(x)) &= \sum_{j=1}^\infty \chi_{S_j}(x, \vp_i(x))\\
	\ra \sum_{i=1}^k \chi_S(x, \vp_i(x)) &= \sum_{i=1}^k \sum_{j=1}^\infty \chi_{S_j}(x, \vp_i(x)) =  \sum_{j=1}^\infty \sum_{i=1}^k\chi_{S_j}(x, \vp_i(x))\\
	\ra \int_I\sum_{i=1}^k \chi_S(x, \vp_i(x)) &= \int_I \sum_{j=1}^\infty \sum_{i=1}^k\chi_{S_j}(x, \vp_i(x))\ d\mu(x)\\
	\end{split}
	\end{align}
	This implies that
	\begin{align}
	\begin{split}
	\nu(S)&= \int_I\lim_{n\to \infty} \sum_{j=1}^n \lrp{\sum_{i=1}^k\chi_{S_j}(x, \vp_i(x))} d\mu(x)\\
	&= \lim_{n\to \infty}\int_I\sum_{j=1}^n\lrp{ \sum_{i=1}^k\chi_{S_j}(x, \vp_i(x))} d\mu(x)\\
	&= \lim_{n\to \infty}\lrb{\sum_{j=1}^n\int_I\lrp{ \sum_{i=1}^k\chi_{S_j}(x, \vp_i(x))} d\mu(x)}\\
	\ra \nu(S) &= \lim_{n\to \infty} \sum_{j=1}^n \nu(S_j) = \sum_{j=1}^\infty \nu(S_j)\\
	\end{split}
	\end{align}
	showing that $\nu$ is countably additive and is therefore a measure.
	Now let $S=A\times B$ be a measurable rectangle.
	Then
	\begin{align}
	\begin{split}
	\nu(S) &= \int_I \sum_{y} \chi_{E\cap(A\times B)}(x, y) \ d\mu(x)\\
	&= \int_A \sum_y\chi_{E\cap (I\times B)}(x, y) \ d\mu(x)\\
	&= \int_A \deg_B(x)\ d\mu(x)\\
	&= \eta(A\times B)
	\end{split}
	\end{align}
	So $\nu$ agrees with $\eta$ on the measurable rectangles.
	But since extension of a finitely additive and countably sub-additive measure on the algebra of measurable rectangles to the Borel $\sigma$-algebra is unique (Caratheodory Extension Theorem), we must have that $\nu=\eta$ and we are done.
\end{proof}

If we have a non-negative map $\psi:I^2\to \R$, then by definition of integration we have that
\begin{align}
\begin{split}
\int_{I^2}\psi\ d\eta &= \lim_{n\to \infty}\int_{I^2} \psi_n\ d\eta\\
\end{split}
\end{align}
where $(\psi_n)$ is a sequence of non-negative simple functions such that $\psi_n\uparrow \psi$.
This definition gives a theory of integration.
We could define another theory of integration by declaring the integral of $\psi$ to be equal to
\begin{equation}
\int_I \sum_{y\in I}\psi(x, y) \chi_E(x, y)\ d\mu(x)
\end{equation}
By Lemma \ref{what eta really is} these two theories agree on simple functions, and therefore are the same theories of integration.
So for any $\psi\in L^1(I^2, \eta)$ we have
\begin{equation}
\label{what is eta in terms of mu}
\int_{I^2} \psi(x, y) \ d\eta(x, y) = \int_I \sum_y\psi(x, y)\chi_E(x, y)\ d\mu(x)
\end{equation}

\section{Cheeger Constant, Laplacian, and the Bottom of the Spectrum for a Graphon}\label{graphon}

\subsection{Cheeger Constant for Graphons}
\begin{defn}\label{def:cheeger-graphon}
Given a graphon $W$, we define the \define{Cheeger constant}  of $W$ as
\begin{equation}
	h_W= \inf_{A\subseteq I:\ 0< \mu_L(A)< 1}\frac{e_W(A, A^c)}{\min\set{\vol_W(A), \vol_W(A^c)}}
\end{equation}
It will be convenient to denote the quantity
\begin{equation}
	\frac{e_W(A,A^c)}{\min\set{\vol_W(A), \vol_W(A^c)}}
\end{equation}
as $h_W(A)$.
A  symmetrized version of the above constant, which we call the \define{symmetric Cheeger constant} is defined as
\begin{equation}
	g_W = \inf_{A\subseteq I:\ 0< \mu_L(A)< 1}\frac{e_W(A, A^c)}{\vol_W(A)\vol_W(A^c)}
\end{equation}
\end{defn}
The analogue of $g_W$ for finite graphs is called the \emph{averaged minimal cut} in \cite{friedland_nabben_weighted_cheeger}.
Note that the above defined constants exist for connected graphons.

\begin{lemma}
	\label{lemma:bound on the cheeger constant of a graphon}
	Let $W$ be a connected graphon.
	Then $h_W\leq 1/2$.
\end{lemma}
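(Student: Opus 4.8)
The plan is to exhibit, for each integer $n\ge 1$, an admissible set $B_n\subseteq I$ (i.e. with $0<\mu_L(B_n)<1$) satisfying $h_W(B_n)\le \frac{n}{2n-1}$; since $\frac{n}{2n-1}\downarrow \frac12$ and $h_W$ is an infimum of the quantities $h_W(A)$, this yields $h_W\le\frac12$. Throughout write $V:=\vol_W(I)=\int_{I^2}W = \int_0^1 d_W$, and note $V>0$ because connectedness of $W$ forces $d_W>0$ a.e. The point of the argument is that the trivial estimate $e_W(A,A^c)\le \min\{\vol_W(A),\vol_W(A^c)\}$ only gives $h_W\le 1$, so we must use that $\mu_L$ is atomless.

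First I would cut the ``vertex interval'' $I$ into $2n$ pieces of equal $W$-volume. The function $D(t):=\vol_W([0,t])=\int_0^t d_W(x)\,dx$ is continuous (absolute continuity of the Lebesgue integral, using $0\le d_W\le 1$), nondecreasing, with $D(0)=0$ and $D(1)=V$; by the intermediate value theorem choose $0=t_0<t_1<\dots<t_{2n}=1$ with $D(t_k)=\frac{k}{2n}V$ and set $P_k:=[t_{k-1},t_k)$, so that $\vol_W(P_k)=\frac{V}{2n}$ for every $k$. For a subset $S\subseteq\{1,\dots,2n\}$ of size $n$ put $B_S:=\bigcup_{i\in S}P_i$; then $\vol_W(B_S)=\vol_W(B_S^c)=\frac V2$. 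Since $\vol_W(B_S)=\int_{B_S}d_W\le\mu_L(B_S)$ we get $\mu_L(B_S)\ge\frac V2>0$, and likewise $\mu_L(B_S^c)\ge\frac V2>0$, so each $B_S$ is admissible in the infimum defining $h_W$.

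Next comes the crux, a first-moment (averaging) estimate over the $\binom{2n}{n}$ choices of $S$. Writing $e_W(B_S,B_S^c)=\sum_{i\in S,\ j\notin S}e_W(P_i,P_j)$ and counting that an ordered pair $(i,j)$ with $i\ne j$ has $i\in S$, $j\notin S$ for exactly $\binom{2n-2}{n-1}$ of these sets, I would compute
\[
\frac{1}{\binom{2n}{n}}\sum_{|S|=n}e_W(B_S,B_S^c)=\frac{\binom{2n-2}{n-1}}{\binom{2n}{n}}\sum_{i\ne j}e_W(P_i,P_j)=\frac{n}{2(2n-1)}\Bigl(V-\sum_i e_W(P_i,P_i)\Bigr)\le\frac{n}{2(2n-1)}\,V,
\]
using $\sum_{i,j}e_W(P_i,P_j)=e_W(I,I)=V$ and $e_W(P_i,P_i)\ge 0$. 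Hence some $B_n:=B_S$ satisfies $e_W(B_n,B_n^c)\le\frac{n}{2(2n-1)}V$, so that
\[
h_W(B_n)=\frac{e_W(B_n,B_n^c)}{\min\{\vol_W(B_n),\vol_W(B_n^c)\}}\le\frac{\tfrac{n}{2(2n-1)}V}{V/2}=\frac{n}{2n-1},
\]
and letting $n\to\infty$ gives $h_W\le\frac12$.

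The main obstacle here is conceptual rather than computational: one genuinely has to exploit atomlessness of $(I,\mu_L)$ — a median-volume interval cut $[0,t_0]$ only yields $h_W\le 1$ — and the gain of the extra factor $\tfrac12$ is extracted precisely by slicing $I$ into arbitrarily many equal-volume pieces and running the averaging argument above. Everything else (continuity of $D$, the binomial identity $\binom{2n-2}{n-1}/\binom{2n}{n}=\tfrac{n}{2(2n-1)}$, and the admissibility check for $B_S$) is routine.
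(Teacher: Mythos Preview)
Your proof is correct. The admissibility check, the binomial identity, and the averaging estimate are all fine, and letting $n\to\infty$ gives exactly $h_W\le\tfrac12$.

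Your route, however, is genuinely different from the paper's. The paper exploits ergodic theory: it takes the doubling map $S(x)=2x\pmod 1$, sets $A_n=S^{-n}([0,\tfrac12])$, and uses strong mixing of $T=S\times S$ on $(I^2,\mu_L\otimes\mu_L)$ to show that $e_W(A_n,A_n^c)\to\vol_W(I)/4$ and $\vol_W(A_n)\to\vol_W(I)/2$, whence $h_W(A_n)\to\tfrac12$. In effect, the paper produces a single explicit sequence of sets whose isoperimetric ratios converge to $\tfrac12$. Your argument is instead a combinatorial first-moment bound: slice $I$ into $2n$ equal-volume intervals and average over the $\binom{2n}{n}$ balanced unions. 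This is more elementary---no dynamical systems are invoked---and arguably more transparent about where the factor $\tfrac12$ comes from (it is the asymptotic probability that a uniformly random size-$n$ subset separates a fixed pair). The paper's approach, on the other hand, yields a concrete description of near-optimal cuts (dyadic ``checkerboards'') and connects the result to measure-theoretic mixing, which may be more natural from the geometric/analytic perspective the paper emphasizes.
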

\begin{proof}
	Let $A$ be the interval $[0, 1/2]$ and write $\eta_L$ to denote the Lebesgue measure on $I^2$.
	Define $S:I\to I$ as $S(x)=2x\pmod{1}$, and write $A_n$ to denote $S^{-n}(A)$.
	Thus $S$ is strong mixing, and hence so is $T:=S\times S:I^2\to I^2$.
	Fix $\varepsilon>0$.
	The strong mixing property of $T$ gives that
	\begin{equation}
		\lim_{n\to \infty} \int_{I^2} (\chi_{A\times A^c}\circ T^n)W\ d\eta_L = \lrp{\int_{I^2} \chi_{A\times A^c}\ d\eta_L} \lrp{\int_{I^2} W\ d\eta_L} = \vol_W(I)/4
	\end{equation}
	and
	\begin{equation}
		\lim_{n\to \infty} \int_{I^2} (\chi_{A\times I}\circ T^n)W\ d\eta_L = \lrp{\int_{I^2} \chi_{A\times A^c}\ d\eta_L} \lrp{\int_{I^2} W\ d\eta_L} = \vol_W(I)/2
	\end{equation}
	Therefore for $n$ large enough we have
	\begin{equation}
		\lrmod{\int_{A_n\times A_n^c} W\ d\eta_L - \vol_W(I)/4}< \varepsilon
	\end{equation}
	and
	\begin{equation}
		\lrmod{\int_{A_n\times I} W\ d\eta_L - \vol_W(I)/2} < \varepsilon 
	\end{equation}
	From the last equation we also get
	\begin{equation}
		\lrmod{\int_{A_n^c\times I} W\ d\eta_L - \vol_W(I)/2} < \varepsilon 
	\end{equation}
	So we see that the ratio
	\begin{equation}
		\frac{\int_{A_n\times A_n^c} W\ d\eta_L
		}
		{
			\min\set{\int_{A_n\times I} W\ d\eta_L, \int_{A_n^c\times I} W\ d\eta_L}
		}
		= h_{W}(A_n)
	\end{equation}
	can be made arbitrarily close to $ \frac{1}{4}\vol_W(I)/ \frac{1}{2} \vol_W(I) = 1/2$ for $n$ suitably large.
	But $h_W\leq h_W({A_n})$ and so we conclude that $h_W\leq 1/2$.
\end{proof}

There certainly exist graphons with Cheeger constant $1/2$, for example the graphon which takes the value $1$ everywhere.

\subsection{Definition of d, d*, and Laplacian of a Graphon.}
Let $W$ be a connected graphon.
Define
\begin{equation}
	E=\set{(x, y)\in I^2:\ y>x}, \qquad E_W=\set{(x, y)\in E:\ W(x, y) >0}
\end{equation}
The set $E$ can be thought of as an orientation of all the ``edges".
The set $E_W$ disregards the oriented edges which have zero weight.

Define a measure $\nu$ on $I$ as
\begin{equation}
	\nu(A) = \int_A d_W(x)\ dx = \vol_W(A)
\end{equation}
for all  measurable subsets $A$ of $I$.
In other words, the Radon-Nikodym derivative of $\nu$ with respect to the Lebesgue measure is $d_W$.
Clearly, $\nu$ is absolutely continuous with respect to the Lebesgue measure on $I$.
The connectedness of $W$ implies that the Lebesgue measure is also absolutely continuous with respect to $\nu$.
Thus we may talk about null sets in $I$ unambiguously.
This also says that $L^\infty(I, \nu)=L^\infty(I, \mu_L)$, and thus we write these simply as $L^\infty(I)$.

Similarly, define a measure $\eta$ on $E_W$ as
\begin{equation}
	\eta(S) = \int_S W(x, y) \ dxdy
\end{equation}
for all measurable subsets $S$ of $E_W$.
So the Radon-Nikodym derivative of $\eta$ with respect to the Lebesgue measure is $W$.
These measures give rise to Hilbert spaces $L^2(I, \nu)$ and $L^2(E_W, \eta)$, the inner products on which will be denoted by $\ab{\cdot, \cdot}_v$ and $\ab{\cdot, \cdot}_e$ respectively.
Explicitly
\begin{equation}
	\ab{f, g}_v = \int_0^1 f(x)g(x) d_W(x)\ dx
\end{equation}
for all $f, g\in L^2(I, \nu)$, and
\begin{equation}
	\ab{\vp, \psi}_e = \int_{E_W}\vp\psi W = \int_0^1\int_x^1 \vp(x, y) \psi(x, y) W(x, y)\ dy dx
\end{equation}
for all $\vp, \psi\in L^2(E_W)$.
The standard inner products on $L^2(I, \mu_L)$ and $L^2(I^2, \mu_L\otimes \mu_L)$ will be denoted by $\ab{\cdot, \cdot}_{L^2(I)}$ and $\ab{\cdot, \cdot}_{L^2(I^2)}$.

%


Define a map $d:L^2(I,\nu)\to L^2(E_W, \eta)$ as
\begin{equation}
	(df)(x, y) = f(y) - f(x)
\end{equation}
for all $f\in L^2(I, \nu)$.
The map $d$ can be thought of as a gradient which measures the change in $f$ as we travel from the tail of an edge to the head.
We need to check that $df$ actually lands in $L^2(E_W, \eta)$ for any given member of $L^2(I, \nu)$.
This and more is proved in the following lemma.

\begin{lemma}\label{d-cont}
	The map $d$ is continuous.
\end{lemma}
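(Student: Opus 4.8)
The plan is to show that $d$ is a bounded linear operator by estimating $\|df\|_e^2$ in terms of $\|f\|_v^2$. Linearity is immediate from the formula $(df)(x,y) = f(y) - f(x)$, so the real content is the norm bound, which will simultaneously establish that $df \in L^2(E_W,\eta)$ whenever $f \in L^2(I,\nu)$.

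First I would write out
\begin{equation*}
\|df\|_e^2 = \int_0^1\int_x^1 (f(y)-f(x))^2 W(x,y)\, dy\, dx,
\end{equation*}
and then symmetrize: since $W$ is symmetric and $(f(y)-f(x))^2$ is symmetric in $x,y$, this equals $\tfrac12 \int_{I^2} (f(y)-f(x))^2 W(x,y)\, dx\, dy$. Next I would expand the square as $f(y)^2 - 2f(x)f(y) + f(x)^2$ and bound it crudely by $(f(y)-f(x))^2 \le 2f(x)^2 + 2f(y)^2$. Integrating term by term and using $\int_0^1 W(x,y)\, dy = d_W(x)$ (and the symmetric statement in the other variable), the integral of $2f(x)^2 W(x,y)$ over $I^2$ becomes $2\int_0^1 f(x)^2 d_W(x)\, dx = 2\|f\|_v^2$, and likewise for the $f(y)^2$ term. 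This yields
\begin{equation*}
\|df\|_e^2 \le \tfrac12\big(2\|f\|_v^2 + 2\|f\|_v^2\big) = 2\|f\|_v^2,
\end{equation*}
so $\|df\|_e \le \sqrt{2}\,\|f\|_v$. In particular $df$ is well-defined as an element of $L^2(E_W,\eta)$, and $d$ is bounded with operator norm at most $\sqrt{2}$, hence continuous.

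One small point I would be careful about: the estimate above uses Fubini/Tonelli to interchange the order of integration, which is justified because the integrand $(f(y)-f(x))^2 W(x,y)$ is non-negative and measurable, so Tonelli applies unconditionally; the finiteness of the resulting bound then retroactively confirms $df \in L^2$. I would also note that measurability of $(x,y)\mapsto f(y)-f(x)$ on $E_W$ follows from measurability of $f$ and of $E_W \subseteq I^2$. I do not anticipate a serious obstacle here — the only mild subtlety is making sure the a.e.\ ambiguity is harmless, which is exactly what the earlier remark ($L^\infty(I,\nu)=L^\infty(I,\mu_L)$, and null sets in $I$ being unambiguous) is there to guarantee, together with the fact that the diagonal and the set $\{W=0\}$ are $\eta$-null so changing $f$ on a null set changes $df$ only on an $\eta$-null subset of $E_W$.
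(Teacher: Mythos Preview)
Your proof is correct and follows the same strategy as the paper: show $d$ is bounded by directly estimating $\|df\|_e^2$ in terms of $\|f\|_v^2$. Your symmetrization together with the elementary bound $(a-b)^2\le 2a^2+2b^2$ is a bit cleaner than the paper's version (which bounds the upper triangle by the full square without the factor $\tfrac12$ and then controls the cross term via Cauchy--Schwarz), and it yields the sharper constant $\|df\|_e\le\sqrt{2}\,\|f\|_v$ in place of the paper's $\|df\|_e\le 2\|f\|_v$.
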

\begin{proof}
	We want to show that $d$ is bounded.
	Let $f\in L^2(I, \nu)$.
	Then
	\begin{align}
		\begin{split}
			 \norm{df}_e^2 &= \ds\int_{E_W} (df)^2 W\\
			 &= \int_0^1\int_x^1 (f(y)-f(x))^2W(x, y)\ dydx\\
			 &\leq \int_0^1\int_0^1 (f(y)-f(x))^2W(x, y)\ dydx\\
			 &\leq \int_0^1\int_0^1 f^2(y)W(x, y)\ dy dx + \int_0^1\int_0^1 f^2(x) W(x, y)\ dy dx \\&\hskip15em\relax+ 2\int_0^1\int_0^1 |f(x)f(y)|W(x, y)\ dydx\\
			 &= 2\int_0^1f^2(x)d_W(x)\ dx + 2\int_0^1\int_0^1 |f(x)f(y)|W(x, y)\ dydx
	 	\end{split}
	\end{align}
	The first term is the same as $2\norm{f}_v^2$.
	So we need to bound the second term.
	Let $\alpha, \beta:I^2\to\R$ be defined as
	\begin{equation}
		\alpha(x, y)= |f(x)|\sqrt{W(x, y)}, \qquad \beta(x, y) = |f(y)| \sqrt{W(x, y)}
	\end{equation}
	The fact that $f\in L^2(I, \nu)$ implies that $\alpha, \beta\in L^2(I^2)$.
	Then we have
	\begin{equation}
		\ds\int_0^1\int_0^1 |f(x)f(y)|W(x, y)\ dydx = \ab{\alpha, \beta}_{L^2(I^2)}
	\end{equation}
	But now by Cauchy-Schwarz inequality we have
	\begin{align}
		\begin{split}
			\ab{\alpha, \beta}_{L^2(I^2)} &\leq \norm{\alpha}_{L^2(I^2)}\norm{\beta}_{L^2(I^2)}\\
			&= \ds\lrp{\int_0^1\int_0^1 f^2(x)W(x, y) \ dydx }^{1/2}\lrp{\int_0^1\int_0^1 f^2(y)W(x, y)\ dx dy}^{1/2}\\
			&= \norm{f}_v^2
		\end{split}
	\end{align}
	We conclude that $\norm{df}_e\leq 2\norm{f}_v$.
	This shows that $d$ is continuous.
\end{proof}

The above lemma shows that $d^*$, the adjoint\footnote{Here, again, the adjoint is taken with respect to the Hilbert space structure coming from  $\ab{\cdot, \cdot}_v$ and $\ab{\cdot, \cdot}_e$.} of $d$, exists.
We now calculate it explicitly.
Let $f\in L^2(I, \nu)$ and $\vp\in L^2(E_W, \eta)$ be arbitrary.
We have

%
\begin{align}
	\begin{split}
		\ab{df, \vp}_e &= \int_0^1\int_x^1 df(x, y) \vp(x, y)W(x, y)\ dydx\\
		&= \int_0^1\int_x^1 (f(y)-f(x)) \vp(x, y) W(x, y)\ dy dx\\
		&= \int_0^1\int_x^1 f(y) \vp(x, y) W(x, y) \ dydx - \int_0^1\int_x^1 f(x) \vp(x, y) W(x, y)\ dy dx\\
		&\reason{Fubini}{=}  \int_0^1\int_0^y f(y) \vp(x, y) W(x, y)\ dx dy - \int_0^1\int_x^1 f(x) \vp(x, y) W(x, y)\ dy dx\\
		&= \int_0^1\int_0^x f(x) \vp(y, x) W(x, y) \ dy dx - \int_0^1\int_x^1 f(x) \vp(x, y) W(x, y) \ dy dx\\
		&= \int_0^1 f(x)\lrb{ \int_0^x \vp(y, x) W(x, y)\ dy - \int_x^1\vp(x, y) W(x, y)\ dy}dx
	\end{split}
\end{align}
On the other hand we have
\begin{equation}
	\ab{f, d^*\vp}_v = \int_0^1 f(x) d^*\vp(x) d_W(x) \ dx
\end{equation}
Thus we have
\begin{equation}
	(d^*\vp)(x) =  \frac{1}{d_W(x)}\lrb{\int_0^x \vp(y, x) W(x, y)\ dy - \int_x^1\vp(x, y) W(x, y)\ dy}
\end{equation}
wherever $d_W(x) \neq 0$.
We set $(d^*\vp)(x)=0$ if $d_W(x)=0$.

\begin{rmk}
We have adapted the language of differential forms above so that we think of the map $$d:C^0(W) \to C^1(W)$$ as an exterior derivative from $0-$forms (i.e. functions on the vertex set) to $1-$forms (i.e. functions on the set of directed edges). Then $d^\ast$ is the adjoint map using the Hodge $\ast$: $$d^\ast:C^1(W) \to C^0(W).$$

 Alternately, in the presence of  inner products on both the vertex and edge-spaces (the situation  here) we may think of $d$ as an analog of  the gradient  operator ($\rm grad$ or $\nabla$) in classical vector calculus and 
$d^\ast$ as an analog of the divergence operator $\rm div$.
\end{rmk}

Define the \define{Laplacian} of $W$ as $\lap_W=d^*d$.
We may drop the subscript when there is no confusion.
For $f\in L^2(I, \nu)$, we calculate $(\lap_Wf)(x)$.
\begin{align}
	\begin{split}
		(\lap_Wf)(x) &= (d^*df)(x)\\
	   	&= \frac{1}{d_W(x)} \lrb{\int_0^x df(y, x) W(x, y) \ dy-\int_x^1 df(x, y)W(x, y)\ dy} \\
		&=  \frac{1}{d_W(x)}\lrb{\int_0^1 (f(x)-f(y)) W(x, y) \ dy}\\
		&=  f(x) - \frac{1}{d_W(x)} T_Wf(x)
	\end{split}
\end{align}
where $T_W:L^2(I, \nu)\to L^2(I, \nu)$ is a linear map defined as
\begin{equation}
	\label{definition of adjacency operator}
	(T_Wf)(x) = \ds\int_0^1 W(x, y)f(y)\ dy
\end{equation}
The map $T_W$ is well-defined.
Indeed, the integral on the RHS of Equation \ref{definition of adjacency operator} exists.
To see this, let $\constone$ denote the constant map $I\to \R$ which takes all points to $1$.
Then $\constone\in L^2(I, \nu)$, and thus
\begin{align}
	\begin{split}
		\ab{|f|, \constone}_v = \int_0^1 |f(y)| d_W(y)\ dy< \infty\\
		\ra \int_0^1 |f(y)|\lrb{\int_0^1 W(x, y)\ dx} dy < \infty\\
		\ra \int_0^1\lrb{\int_0^1 |f(y)|W(x, y)\ dy}dx < \infty
	\end{split}
\end{align}
Therefore $\int_0^1|f(y)| W(x, y)\ dy$ is almost everywhere finite and consequently $(T_Wf)(x)$ exists.
It is also easy to check (using Cauchy-Schwarz) that $T_Wf$ lies in $L^2(I, \nu)$.
Therefore we have $\lap_W = I- \frac{1}{d_W} T_W$.

\subsection{Bottom of the Spectrum of a Graphon}
Let us see what is the multiplicity of the singular value $0$ of the Laplacian of a connected graphon $W$.
For $f\in L^2(I, \nu)$, we have $\lap f=0$ if and only if $df=0$.
We claim that $df=0$ if and only if $f$ is constant (up to a set of measure zero).
Clearly, if $f$ is constant, then $df=0$.
Conversely, assume that $df=0$.
Thus
\begin{equation}
	\int_{E_W} (df)^2 W = \int_E (df)^2 W = \int_0^1\int_x^1(f(y)-f(x))^2W(x, y)\ dy dx =0
\end{equation}
which implies that
\begin{equation}
	\int_0^1\int_0^1 (f(y)-f(x))^2W(x, y)\ dy dx=0
\end{equation}
For each $t\in \R$, let $S_t=f^{-1}(t, \infty)$.
From the last equation we have
\begin{equation}
	\int_{S_t^c\times S_t} (f(y)-f(x))^2 W(x,y) \ dy dx=0
\end{equation}
which implies that $(f(y)-f(x))^2W(x, y)$ is a.e. $0$ on $S_t^c\times S_t$.
But $f(y)-f(x)\neq 0$ for all $(x, y)\in S_t^c\times S_t$, which means that $W=0$ a.e. on $S_t^c\times S_t$.
The connectedness of $W$ then implies that either $S_t$ or $S_t^c$ has measure $0$.
So our claim follows from the following lemma.

\begin{lemma}\label{ess-const-graphon}
	Let $f:I\to \R$ be a measurable function such that for all $t\in \R$ we have either $f^{-1}(-\infty, t]$ or $f^{-1}(t, \infty)$ has measure $0$.
	Then $f$ is essentially constant.
\end{lemma}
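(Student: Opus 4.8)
The plan is to show that $f$ is equal $\mu_L$-a.e.\ to the constant $c = \sup\{t \in \R : \mu_L(f^{-1}(t,\infty)) > 0\}$, which I first need to see is well-defined and finite. For finiteness from above: if $f^{-1}(t,\infty)$ had positive measure for all $t$, then also $f^{-1}[t,\infty)$ would for all $t$, but $\bigcap_{t} f^{-1}[t,\infty) = \emptyset$ and these sets are nested decreasing, so by continuity from above (the total space has finite measure $1$) their measures tend to $0$, a contradiction. Hence the set of $t$ with $\mu_L(f^{-1}(t,\infty))>0$ is bounded above. For boundedness below (nonemptiness of the set): if $f^{-1}(t,\infty)$ had measure $0$ for all $t$, then $f^{-1}(-\infty,t]$ has measure $1$ for all $t$; but $\bigcup_t f^{-1}(-\infty,t] = I$ up to… actually $\bigcup_{t \in \Z} f^{-1}(-\infty,t] = f^{-1}(\R) = I$, and these are nested increasing with measure $1$, which is fine, so instead I argue: $f^{-1}(-\infty,t]$ having measure $1$ for every $t$ forces $\bigcap_{t} f^{-1}(-\infty,t] = f^{-1}(\emptyset)=\emptyset$ to have measure $1$ by continuity from above, a contradiction. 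So $c \in \R$ is well-defined.

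Next I use the hypothesis in the form it is actually given. For every $t$, either $f^{-1}(-\infty,t]$ or $f^{-1}(t,\infty)$ has measure zero; note exactly one of them does, since they partition $I$ which has measure $1$. Define $c$ as the supremum of those $t$ for which $f^{-1}(t,\infty)$ has positive measure (equivalently, for which $f^{-1}(-\infty,t]$ has measure zero). Then for every $t > c$ we have $\mu_L(f^{-1}(t,\infty)) = 0$, so $f \le c$ a.e.\ after taking a countable union over $t = c + 1/n$: $\mu_L(f^{-1}(c,\infty)) = \mu_L\big(\bigcup_n f^{-1}(c+\tfrac1n,\infty)\big) = 0$, hence $f \le c$ a.e. On the other hand, for every $t < c$ there exists (by definition of supremum) some $s \in (t, c]$ with $\mu_L(f^{-1}(s,\infty)) > 0$; since $f^{-1}(s,\infty) \subseteq f^{-1}(t,\infty)$, we get $\mu_L(f^{-1}(t,\infty)) > 0$, which by the hypothesis forces $\mu_L(f^{-1}(-\infty,t]) = 0$. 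Taking a countable union over $t = c - 1/n$ gives $\mu_L(f^{-1}(-\infty,c)) = \mu_L\big(\bigcup_n f^{-1}(-\infty, c-\tfrac1n]\big) = 0$, hence $f \ge c$ a.e.

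Combining the two, $f = c$ a.e., so $f$ is essentially constant. The only genuinely delicate point is the well-definedness of $c$ — i.e.\ that the relevant set of thresholds is nonempty and bounded above — and this is exactly where the finiteness of $\mu_L$ on $I$ (continuity from above for measures) is used; everything else is a routine manipulation of nested preimages and countable unions. I expect no real obstacle beyond being careful that the two measure-zero conclusions genuinely pin $f$ down from both sides.
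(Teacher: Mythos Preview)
Your approach is essentially the same as the paper's: define the threshold constant (you use $c=\sup\{t:\mu_L(f^{-1}(t,\infty))>0\}$, the paper uses the equivalent $t_0=\inf\{t:f^{-1}(-\infty,t]\text{ has full measure}\}$), show it is finite, and then squeeze $f$ to that value a.e.\ via countable unions over $t=c\pm 1/n$. The core argument in your second and third paragraphs is correct and matches the paper's.

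One small gap to fix: in your ``bounded above'' step, the observation that the measures of $f^{-1}[t,\infty)$ tend to $0$ does \emph{not} by itself contradict each of them being positive. The clean way (which you in effect use for the ``nonempty'' direction) is to invoke the hypothesis: if $\mu_L(f^{-1}(t,\infty))>0$ for all $t$, then $\mu_L(f^{-1}(-\infty,t])=0$ for all $t$, whence $\mu_L(I)=\mu_L\big(\bigcup_n f^{-1}(-\infty,n]\big)=0$, a contradiction. With that correction the proof is complete.
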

\begin{proof}
	Let
	\begin{equation}
		t_0=\inf\set{t\in \R:\ f^{-1}(-\infty, t] \text{ is full measure}}
	\end{equation}
	Then $t_0\neq -\infty$.
	This is because $I=\bigsqcup_{n\in \Z} f^{-1}(n, n+1]$.
	Thus $f^{-1}(n, n+1]$ has positive measure for some integer $n$, and this $n$ cannot exceed $t_0$.
	Also, by definition of $t_0$, we have that $f^{-1}(-\infty, t_0-1/n]$ has measure $0$ for each $n$.
	Thus $f^{-1}(-\infty, t_0)$ also has measure $0$.
	Again, by definition of $t_0$ we have that $f^{-1}(t_0+1/n, \infty)$ has measure zero for all $n$, and thus $f^{-1}(t_0, \infty)$ has measure zero.
	So we conclude that $f^{-1}(t_0)$ has full measure.
\end{proof}

So we have shown that the eigenfunctions corresponding to $0$ are precisely the essentially constant functions.
In other words, the eigenspace of $\lap$ corresponding to $0$ is generated by $\constone$, the constant function taking value $1$ everywhere.
The {\bf bottom of the spectrum} denoted $\lambda_W$ is therefore given by the following Rayleigh quotient:
\begin{equation}
	\lambda_W = \inf_{f\in \constone^\perp_v, f\neq 0} \frac{\ab{f, \lap f}_v}{\ab{f, f}_v} = \inf_{f\in \constone^\perp_v, f\neq 0} \frac{\norm{df}^2_e}{\norm{f}^2_v}
\end{equation}
(Here, $\constone^\perp_v$ denotes the orthogonal complement of $\constone$ with respect to the inner product $\ab{\cdot, \cdot}_v$).

\section{Finite graphs and graphons} The purpose of this section is to explore the relationship between the Cheeger constant of a  finite graphs with that of a canonically associated graphon. Similarly we study the relationship between the bottom of the spectrum of a  finite graphs with that of its canonically associated graphon.
\subsection{Cheeger constant of a graph versus that of the corresponding graphon}
\label{section:cheeger constant of a graph versus a graphon}
In what follows, by a \define{weighted graph} we mean a pair $(V, w)$, where $w:V\times V\to [0, 1]$ is a symmetric map. 
Every weighted graph $G$ naturally gives rise to a graphon.
It is  natural  to ask about the relation between their Cheeger constants.
Clearly that $h_W\leq h_G$.
The aim of this section is to put a lower bound on the ratio $h_W/h_G$ when $G$ is \emph{loopless}, where a \define{loopless weighted graph} is a weighted graph $(V, w)$ such that $w(x, x)=0$ for all $x\in V$.
We will also assume that all the weighted graphs considered are connected.
This means that whenever we partition the vertex set into two parts, the total weight of the cut is positive.
The volume of a vertex $v$ of a weighted graph $(V, w)$ is defined as $\vol(v)=\sum_{u\in V}w_{uv}$.
We also define $\vol(G)=\sum_{v\in V}\vol(v)$.

Given any set $V$, a \define{fractional partition} of $V$ is a pair $(\rho, \eta)$, where $\rho, \eta:V\to I$ are functions such that $\rho(u)+\eta(u)=1$ for all $u\in V$.
Note that a true partition of $V$ (into two parts) can be thought of as a fractional partition $(\rho, \eta)$ such that $\rho$ and $\eta$ takes values in $\set{0, 1}$.

Let $G=(V=[n], w)$ be a weighted graph.
We define the \emph{fractional Cheeger constant} of $G$ as follows:
For a fractional partition $(\rho, \eta)$ of $V$, we define
\begin{equation}
	\label{equation: definition of fractional cheeger constant}
	\tilde h(G;\ \rho, \eta) = \frac{\sum_{u, v\in V} \rho(u)\eta(v)w(u, v)}{\min\set{\sum_{u\in V} \rho(u)\vol(u), \sum_{v\in V} \eta(v)\vol(v)}}
\end{equation}
Of course, the above is well-defined only when $\norm{\rho} := \sum_{u\in V} \rho(u)\vol(u)\neq 0$ and $\norm{\eta}:=\sum_{v\in V} \eta(v)\vol(v)\neq 0$, and throughout we will tacitly assume this condition.
The \define{fractional Cheeger constant} of $G$ is defined as
\begin{equation}
	\tilde h_G = \inf_{(\rho, \eta)} \tilde h(G;\ \rho, \eta)
\end{equation}
where the infimum runs over all fractional partitions $(\rho, \eta)$ of $V$.
Note that the Cheeger constant of the graphon corresponding to a graph $G$ is the same as the fractional Cheeger constant of the graph $G$.
The use of the notion of fractional Cheeger constant is just for convenience.
{Therefore, by Lemma \ref{lemma:bound on the cheeger constant of a graphon} the fractional Cheeger constant of any weighted graph is at most $1/2$}.
\footnote{This can be seen directly.
One can achieve the value $1/2$ by choosing a fractional partition which puts half of each vertex on one side and the other half on the other side.}

\subsubsection{Realization of Fractional Cheeger}

\begin{lemma}
	Let $G=(V=[n], w)$ be a weighted graph.
	Then the fractional Cheeger constant of $G$ is realized by a  fractional partition.
\end{lemma}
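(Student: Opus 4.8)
The plan is to realize the fractional Cheeger constant as the minimum of a continuous function on a compact set, after reducing the minimization to a compact domain. The fractional partition $(\rho, \eta)$ is determined by $\rho$ alone (since $\eta = 1 - \rho$), so the space of fractional partitions is naturally identified with the cube $[0,1]^n$, which is compact. The function $\tilde h(G;\ \rho, 1-\rho)$ is, however, not defined on all of $[0,1]^n$: it blows up or becomes $0/0$ on the two ``bad'' faces $\norm{\rho} = 0$ and $\norm{\eta} = 0$. So the first step is to understand the behaviour of $\tilde h$ near these faces and argue that the infimum is not approached there.

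First I would observe that the numerator $N(\rho) := \sum_{u,v} \rho(u)\eta(v) w(u,v)$ and the two quantities $\norm{\rho} = \sum_u \rho(u)\vol(u)$ and $\norm{\eta} = \sum_v \eta(v)\vol(v)$ are all polynomial (hence continuous) in $\rho \in [0,1]^n$. Since $G$ is connected, $\vol(u) > 0$ for every $u$, so $\norm{\rho} = 0$ forces $\rho \equiv 0$ and $\norm{\eta} = 0$ forces $\rho \equiv \one$; thus the bad set is just the two vertices $\set{0, \one}$ of the cube. Next I would show that the infimum of $\tilde h$ is bounded away from what happens near these two points. Concretely: we already know $\tilde h_G \leq 1/2$ (by the remark following the definition, or directly using the half-on-each-side partition), so it suffices to work on the set where $\tilde h(G;\ \rho, \eta) \leq 1/2$, or even just to exhibit a single partition achieving a finite value and then restrict. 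Near $\rho = 0$, write $\rho = t \cdot \sigma$ with $\norm{\sigma}$ of unit order and $t \to 0$: then $\norm{\rho} = O(t)$, $\norm{\eta} \to \vol(G) > 0$, and $N(\rho) = O(t)$ as well, so $\min\set{\norm{\rho}, \norm{\eta}} = \norm{\rho}$ for small $t$ and $\tilde h = N(\rho)/\norm{\rho}$. The point is that this ratio does \emph{not} go to $0$: using connectedness (every vertex has positive volume, and $w$ has positive total cut weight across any genuine partition), one shows $N(\rho)/\norm{\rho}$ stays bounded below by a positive constant as $\rho \to 0$ along any ray — intuitively, removing a tiny fraction of the vertex set always costs a definite fraction of its volume in cut edges, since a single vertex's volume is $\sum_u w_{uv}$ and a small $\rho$ makes $\eta \approx \one$. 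Hence there is a compact subset $K \subseteq [0,1]^n$ bounded away from $\set{0,\one}$ on which the infimum is attained; $\tilde h$ restricted to $K$ is a ratio of continuous functions with nonvanishing denominator, hence continuous, so it attains its minimum on $K$ by compactness.

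The main obstacle I anticipate is making the ``$N(\rho)/\norm{\rho}$ is bounded below near $\rho = 0$'' step fully rigorous and uniform over all directions of approach — that is, showing one cannot make $\tilde h$ tend to $0$ by sending $\rho$ toward a sparse support. One clean way around this: instead of analyzing ray limits, directly lower-bound $\tilde h(G;\ \rho, \eta)$ by the ordinary Cheeger constant $h_G$ (over genuine $\set{0,1}$-partitions) via a convexity/averaging argument — any fractional partition can be written as an integral over thresholds of genuine partitions, and the co-area-style inequality $\inf_{(\rho,\eta)} \tilde h \geq \min$ over level sets shows the fractional infimum is not smaller than something positive. Since $G$ is connected and finite, $h_G > 0$, giving a uniform positive lower bound for $\tilde h$ on the whole space minus $\set{0,\one}$. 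With that lower bound in hand, any minimizing sequence stays in a region where the denominator is bounded below, we extract a convergent subsequence by compactness of $[0,1]^n$, its limit avoids $\set{0,\one}$, and continuity of $\tilde h$ there finishes the argument.
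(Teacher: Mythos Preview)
Your overall strategy --- take a minimizing sequence in the compact cube $[0,1]^n$, extract a convergent subsequence, and rule out the two bad limit points $\rho=0$ and $\rho=\one$ --- is exactly the paper's approach. You also correctly identify the only real difficulty: controlling $N(\rho)/\norm{\rho}$ uniformly as $\rho\to 0$.

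Your proposed ``clean way around'' via co-area, however, has a genuine gap. You claim one can lower-bound $\tilde h(G;\rho,\eta)$ by the ordinary Cheeger constant $h_G$ via a thresholding/averaging argument. That would yield $\tilde h_G\geq h_G$, and combined with the trivial $\tilde h_G\leq h_G$ (genuine partitions are fractional partitions) would force $\tilde h_G=h_G$ for every connected weighted graph. But this is false: already for the path on three vertices one has $h_G=1$ while the half-on-each-side fractional partition gives $\tilde h=1/2$, and indeed the very next lemma in the paper is stated under the hypothesis that $\tilde h_G$ is \emph{strictly} less than $h_G$. The underlying issue is that the numerator $\sum_{u,v}\rho(u)(1-\rho(v))w_{uv}$ is not the co-area integrand $\sum_{u,v}|\rho(u)-\rho(v)|w_{uv}$ (take $\rho\equiv 1/2$). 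Even setting this aside, a uniform lower bound $\tilde h\geq c>0$ on the whole domain does not by itself show the infimum is attained; what you need is that near the boundary $\tilde h$ strictly exceeds $\tilde h_G$.

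Ironically, your first instinct (the ray analysis) was the right one, and the uniformity obstacle you flagged dissolves with a one-line estimate --- which is precisely what the paper does. If every coordinate satisfies $\rho_k(i)<1/3$, then $1-\rho_k(j)>2/3$ for all $j$, so
\[
N(\rho_k)=\sum_i\rho_k(i)\sum_j\bigl(1-\rho_k(j)\bigr)w_{ij}\;\geq\;\tfrac{2}{3}\sum_i\rho_k(i)\vol(i)=\tfrac{2}{3}\,\norm{\rho_k},
\]
whence $\tilde h(G;\rho_k,\eta_k)\geq 2/3$. Since $\tilde h_G\leq 1/2$, a minimizing sequence cannot accumulate at $0$ (nor, by symmetry, at $\one$), and continuity at the limit point finishes the argument.
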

\begin{proof}
	Let $\tilde h$ be the fractional Cheeger constant of $G$ and $(\rho_1, \eta_1), (\rho_2, \eta_2), (\rho_3, \eta_3), \ldots$ be a sequence of fractional partitions of such that
	\begin{equation}
		\label{equation:simple equation}
		\tilde h(G; \rho_k, \eta_k) \leq \tilde h + 1/k\leq 1/2 + 1/k
	\end{equation}
	for all $k$.
	Without loss of generality, assume that $\norm{\rho_k}\leq \vol(G)/2$ for all $k$.
	Since each $\rho_n$ can be thought of as a member of the compact metric space $I^n$, we may assume, by passing to a subsequence if necessary, that $\rho_n\to \rho\in I^n$.
	If $\norm{\rho}>0$ then it is clear that $\tilde h(G; \rho, 1-\rho)= \tilde h$.
	So we may assume that $\rho(i)=0$ for all $i$.
	Then for all large enough $k$ we have $\rho_k(i)< 1/3$.
	Therefore
	\begin{align}
		\begin{split}
			\sum_{i, j=1}^n \rho_k(i)(1-\rho_k(j)) w_{ij} &=\sum_{i=1}^n\rho_k(i)\lrp{\sum_{j=1}^n(1-\rho_k(j))w_{ij}}\\
			&\geq \sum_{i=1}^n \rho_k(i)\lrp{\sum_{j=1}^n 2 w_{ij}/3} = 2/3 \sum_{i=1}^n \rho_k(i)\vol(i)
		\end{split}
	\end{align}
	Therefore
	\begin{equation}
		\tilde h(G; \rho_k, \eta_k)
		=\frac{\sum_{i, j=1}^n \rho_k(i)(1-\rho_k(j)) w_{ij}}{\sum_{i=1}^n\rho_k(i)\vol(i)} \geq \frac{2}{3}
	\end{equation}
	Thus Equation \ref{equation:simple equation} gives $1/2+1/k\geq 2/3$ for all large enough $k$.
	This is a contradiction.
\end{proof}


Next,
define functions $f:I^n\to \R$ and $s:I^n\to \R$ as follows:
\begin{equation}
	f(x_1 , \ldots, x_n) = \sum_{i, j=1}^n x_i(1-x_j)w_{ij}
\end{equation}
and
\begin{equation}
	s(x_1 , \ldots, x_n) = x_1\vol(1) + \cdots + x_n\vol(n)
\end{equation}
Taking the partial derivative of $f$ with respect to $x_p$, we have
\begin{equation}
	\partial f/\partial x_p = \sum_{j=1}^n (1-2x_i)w_{pj}
\end{equation}
and thus
\begin{equation}
	\partial^2f/\partial x_p^2 = 0
\end{equation}
for any $1\leq p\leq n$ since $w_{pp}=0$.

\begin{lemma}
	\label{lemma:the kth derivative}
	For $k\geq 1$, we have
	\begin{equation}
		\ds\frac{\ds\partial^k(f/s)}{\partial x_p^k} = \frac{(-1)^{k+1}k!\vol(p)^{k-1}}{s^{k+1}}\lrp{ s \frac{\partial f}{\partial x_p} - \vol(p)f}
	\end{equation}
\end{lemma}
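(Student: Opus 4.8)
The plan is to prove the formula by induction on $k$. The base case $k=1$ is a direct computation: by the quotient rule,
\begin{equation}
	\frac{\partial(f/s)}{\partial x_p} = \frac{1}{s^2}\lrp{s\frac{\partial f}{\partial x_p} - f\frac{\partial s}{\partial x_p}} = \frac{1}{s^2}\lrp{s\frac{\partial f}{\partial x_p} - \vol(p)f},
\end{equation}
since $\partial s/\partial x_p = \vol(p)$; this matches the claimed expression with $k=1$ (the sign $(-1)^{2}=1$, the factor $1!=1$, and $\vol(p)^{0}=1$).

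For the inductive step, assume the formula holds for some $k\geq 1$ and differentiate once more with respect to $x_p$. Writing the inductive hypothesis as
\begin{equation}
	\frac{\partial^k(f/s)}{\partial x_p^k} = (-1)^{k+1}k!\,\vol(p)^{k-1}\cdot\frac{s\,\partial_pf - \vol(p)f}{s^{k+1}},
\end{equation}
I would apply the quotient rule to the fraction on the right. The key simplifications are: $\partial_p(\partial_p f) = 0$ (established just before the lemma, since $w_{pp}=0$), so $\partial_p(s\,\partial_p f - \vol(p)f) = \vol(p)\,\partial_p f - \vol(p)\,\partial_p f = \vol(p)\cdot$, wait — more carefully, $\partial_p(s\,\partial_p f) = (\partial_p s)(\partial_p f) + s\,\partial_p^2 f = \vol(p)\,\partial_p f$, and $\partial_p(\vol(p)f) = \vol(p)\,\partial_p f$, so the numerator's derivative is $\vol(p)\,\partial_p f - \vol(p)\,\partial_p f = 0$. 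Hence only the denominator contributes:
\begin{equation}
	\partial_p\lrp{\frac{s\,\partial_pf - \vol(p)f}{s^{k+1}}} = (s\,\partial_p f - \vol(p)f)\cdot\frac{-(k+1)s^k\vol(p)}{s^{2k+2}} = \frac{-(k+1)\vol(p)(s\,\partial_pf-\vol(p)f)}{s^{k+2}}.
\end{equation}
Multiplying back by the constant $(-1)^{k+1}k!\,\vol(p)^{k-1}$ gives $(-1)^{k+2}(k+1)!\,\vol(p)^{k}\cdot\frac{s\,\partial_pf-\vol(p)f}{s^{k+2}}$, which is exactly the claimed formula with $k$ replaced by $k+1$. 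This closes the induction.

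The only mild subtlety — and the one point worth stating carefully rather than treating as routine — is the vanishing of $\partial_p(s\,\partial_p f - \vol(p)f)$, which relies crucially on $\partial^2 f/\partial x_p^2 = 0$, i.e. on looplessness ($w_{pp}=0$); without it the numerator would acquire extra terms and the clean closed form would fail. Everything else is the quotient rule and bookkeeping of the constants $(-1)^{k+1}$, $k!$, and $\vol(p)^{k-1}$, so I do not anticipate any real obstacle beyond keeping these factors straight through the inductive step.
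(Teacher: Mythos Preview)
Your proof is correct and follows exactly the approach the paper indicates (the paper's proof is simply the word ``Induction''). The key observation you identify---that $\partial_p(s\,\partial_p f - \vol(p)f)=0$ because $\partial_p^2 f=0$---is precisely what makes the induction go through cleanly.
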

\begin{proof}
	Induction.
\end{proof}

\begin{lemma}
	\label{lemma:realization of cheeger at equipartition}
	Let $G$ be a loopless weighted graph whose fractional Cheeger constant is strictly less than its Cheeger constant.
	Then the fractional Cheeger constant of $G$ can be achieved at a fractional partition $(\rho, \eta)$ such that $\norm{\rho}=\norm{\eta}$.
\end{lemma}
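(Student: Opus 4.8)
The plan is to show that if the fractional Cheeger constant $\tilde h_G$ is strictly less than the (ordinary) Cheeger constant $h_G$, then the realizing fractional partition cannot have $\norm{\rho}\neq\norm{\eta}$, so after possibly adjusting we land on an equipartition. By the previous lemma, $\tilde h_G$ is realized by some fractional partition $(\rho,\eta)=(\rho,1-\rho)$; I will argue that, among all realizing partitions, we may pick one with $\norm{\rho}=\norm{\eta}$, or else derive that the infimum is actually attained by a genuine $\{0,1\}$-partition (contradicting $\tilde h_G<h_G$).

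First I would set up the one-parameter family of perturbations. Suppose $(\rho,\eta=1-\rho)$ realizes $\tilde h_G$ and, without loss of generality, $\norm{\rho}<\norm{\eta}$, i.e. $s(\rho)<\vol(G)/2$, so that the denominator in $\tilde h(G;\rho,\eta)$ equals $s(\rho)$ and $\tilde h_G = f(\rho)/s(\rho)$. Now fix a coordinate $p$ with $0\le\rho(p)<1$ (such $p$ exists since $\norm{\rho}<\vol(G)/2<\vol(G)$) and consider the function $t\mapsto (f/s)(\rho + t e_p)$ for $t$ in a small interval $[0,\delta)$ on which $\rho(p)+t\le 1$ and $s(\rho+te_p)<\vol(G)/2$ still holds. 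Along this segment the denominator of $\tilde h$ is still $s$, so the value of $\tilde h$ at $\rho+te_p$ equals $(f/s)(\rho+te_p)$. Since $(\rho,\eta)$ realizes the infimum, $t=0$ must be a minimum of this function on $[0,\delta)$, so its derivative at $0$ is $\ge 0$: by Lemma \ref{lemma:the kth derivative} with $k=1$,
\[
\left.\frac{\partial(f/s)}{\partial x_p}\right|_{\rho} = \frac{1}{s(\rho)^2}\lrp{s(\rho)\,\partial_p f(\rho) - \vol(p) f(\rho)} \ge 0 .
\]
But Lemma \ref{lemma:the kth derivative} also shows that \emph{all} higher derivatives $\partial^k(f/s)/\partial x_p^k$ at $\rho$ are a fixed nonzero constant (depending on $k$) times the \emph{same} quantity $s(\rho)\partial_p f(\rho)-\vol(p) f(\rho)$; in particular, $f/s$ restricted to this line is a function whose derivatives all have a controlled sign pattern determined by the sign of $Q:=s(\rho)\partial_p f(\rho)-\vol(p)f(\rho)$. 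The key point is that if $Q>0$ then moving $t$ slightly \emph{negative} (i.e. decreasing $\rho(p)$, which is allowed as long as $\rho(p)>0$) strictly decreases $f/s$, contradicting minimality; hence $Q\le 0$, and if $Q<0$ then increasing $\rho(p)$ strictly decreases $f/s$ — again contradicting minimality \emph{unless} we are forced to stop because $\rho(p)$ hits $1$ or $s$ reaches $\vol(G)/2$. Pushing every coordinate to its boundary in this way drives $\rho$ either to a genuine $\{0,1\}$-vector (forbidden, since that would give $\tilde h_G=h_G$) or to the locus $s(\rho)=\vol(G)/2$, i.e. $\norm{\rho}=\norm{\eta}$, which is exactly the desired conclusion. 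The case $Q=0$ means $f/s$ is \emph{constant} along the whole segment (all derivatives vanish), so we may freely slide $\rho(p)$ until $s=\vol(G)/2$ without changing the value of $\tilde h$, again reaching an equipartition.

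I expect the main obstacle to be the bookkeeping in the last step: carefully running the coordinate-by-coordinate adjustment so that each move keeps $(\rho,1-\rho)$ a legitimate fractional partition, keeps the "min" in the denominator equal to $s(\rho)$ (or handle the moment it switches), and terminates either at an equipartition or at a $\{0,1\}$-partition — and then invoking the hypothesis $\tilde h_G<h_G$ to rule out the latter. The role of $w_{pp}=0$ (looplessness), already used to get $\partial^2 f/\partial x_p^2=0$, is what makes $f$ affine in each coordinate and hence makes Lemma \ref{lemma:the kth derivative} give such a clean expression; this affineness is the structural reason the optimum can always be pushed to the boundary of the constraint region. Once the configuration is pushed so that $\norm{\rho}=\norm{\eta}$, we are done.
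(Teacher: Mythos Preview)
Your approach is the same as the paper's, but two steps need tightening before it is a proof.

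First, you choose $p$ with $0\le\rho(p)<1$, yet your deduction ``$Q>0\Rightarrow$ move $t$ negative to decrease $f/s$'' requires $\rho(p)>0$. The paper handles this by first observing that, since $\tilde h_G<h_G$, the minimizer $\rho$ cannot be $\{0,1\}$-valued, so there \emph{is} a coordinate $p$ with $0<\rho(p)<1$; at such an interior $p$ you may perturb in both directions, and minimality forces the first derivative of $f/s$ to vanish, i.e.\ $Q=0$. Your separate discussion of $Q<0$ is then vacuous (you already obtained $Q\ge 0$ from the right-hand perturbation), and the clause ``contradicting minimality \emph{unless} we are forced to stop'' is confused: if $Q<0$ held at the minimizer with $\rho(p)<1$ and $s(\rho)<\vol(G)/2$, that would be an outright contradiction, not a pushing move.

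Second, your parenthetical ``(all derivatives vanish)'' does not by itself yield constancy of $f/s$ along the $x_p$-line: vanishing of all derivatives at a single point requires analyticity to conclude constancy, and the paper invokes this explicitly. An equivalent route, closer to your affineness remark, is to note that $\partial Q/\partial x_p=\vol(p)\,\partial_p f+s\,\partial_p^2 f-\vol(p)\,\partial_p f=0$ by looplessness, so $Q$ is constant along the line; hence $Q=0$ at one point gives $Q\equiv 0$ and $\partial_p(f/s)=Q/s^2\equiv 0$.

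With these two fixes your argument coincides with the paper's: having $Q=0$, slide $x_p$ up to $1$ (keeping $f/s$ constant) or stop when $s$ hits $\vol(G)/2$; repeat with the next interior coordinate; the process cannot terminate at a $\{0,1\}$-vector by the hypothesis $\tilde h_G<h_G$, so it terminates at $\norm{\rho}=\norm{\eta}$.
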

\begin{proof}
Suppose that the fractional Cheeger constant of $G$ is achieved at a fractional partition $(\rho, \eta)$ such that $\norm{\rho}< \norm{\eta}$ and write $a_i=\rho(i)$.
Without loss of generality, assume $a_1 \leq \cdots \leq a_n$.
Some $a_i$ must be strictly between $0$ and $1$, for otherwise the fractional Cheeger constant of $G$ would be equal to the Cheeger constant of $G$.
Say $p\in [n]$ is such that $0<a_p<1$.
Now
\begin{equation}
	\frac{\partial(f/s)}{\partial x_p}
	= \frac{1}{s^2}\lrp{s \frac{\partial f}{\partial x_p} -\vol(p) f}
\end{equation}
If this quantity were not zero, then we could perturb $a_p$ slightly to decrease the value of $f/s$, which would mean that the fractional Cheeger constant of $G$ could be reduced, contrary to the choice of $(\rho , \eta)$.
But this would contradict the fact that the fractional Cheeger constant is realized at $(x_1 , \ldots, x_n)=(a_1 , \ldots, a_n)$.
But then by Lemma \ref{lemma:the kth derivative}, we see that all the partial derivatives of $f/s$ with respect to $x_p$ vanish at the point $(a_1 , \ldots, a_p)$.
Since the function $f/s$ is analytic, this means that the function $f/s$ does not change when we perturb the $p$-th coordinate.
So we may increase it as much as we can, that is, we may push it all the way up to $1$ if $s$ does not cross $\vol(G)/2$ in the process, or stop as soon as $s$ hits the value $\vol(G)/2$.
If we hit $s=\vol(G)/2$ we stop since we have proved our claim.
Otherwise we can set $x_p=1$, and repeat the process for the remaining $q$'s for which $0<a_q<1$.
It cannot be the case that all $x_i$ will be either $0$ or $1$ at the end of this process, since if that were so then the fractional Cheeger constant of $G$ would be equal to the Cheeger constant of $G$, contrary to the hypothesis of the lemma, completing the proof.
\end{proof}

\subsubsection{Comparing the Cheeger constants}
\begin{lemma}
	\label{lemma:comparing cheeger constants}
	Let $G=(V=[n], w)$ be a loopless weighted graph.
	Then for all $1>\varepsilon>0$, we have that
	\begin{equation}
		\frac{\tilde h_G}{h_G} \geq \lrp{1- \frac{2\gamma}{\varepsilon ^2 n}}(1- \varepsilon)
	\end{equation}
	where
	\begin{equation}
		\gamma = \frac{\max\set{\vol(i):\ 1\leq i\leq n}}{\min\set{\vol(i):\ 1\leq i\leq n}}
	\end{equation}
\end{lemma}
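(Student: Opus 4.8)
The plan is to realize $\tilde h_G$ by a fractional partition, round it to a genuine bipartition of $V$ by independent coin flips, and show the rounding only costs the stated factor. I would first dispose of the trivial case $\tilde h_G=h_G$: then $\tilde h_G/h_G=1\ge(1-\varepsilon)\bigl(1-\tfrac{2\gamma}{\varepsilon^2n}\bigr)$, since the right side is a product of a number in $(0,1)$ with a number that is at most $1$ (and $h_G>0$ by connectedness, so the ratio is meaningful). So assume $\tilde h_G<h_G$; this is exactly the hypothesis of Lemma \ref{lemma:realization of cheeger at equipartition}, which then furnishes a fractional partition $(\rho,\eta)$ with $\norm\rho=\norm\eta$ realizing $\tilde h_G$. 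Put $M=\vol(G)$ and $a_i=\rho(i)$ for $i\in[n]$; since $\norm\rho+\norm\eta=\sum_u\bigl(\rho(u)+\eta(u)\bigr)\vol(u)=\sum_u\vol(u)=M$, we get $\sum_i a_i\vol(i)=\norm\rho=M/2$, whence $\tilde h_G=N/(M/2)$ with $N:=\sum_{i,j}a_i(1-a_j)w_{ij}$.

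Next I would take independent Bernoulli variables $X_1,\dots,X_n$ with $\mathbb{P}(X_i=1)=a_i$, set $A=\{i:X_i=1\}$, and write $\operatorname{cut}(A):=\sum_{u\in A,\,v\notin A}w_{uv}$. The edge side is free: since $G$ is loopless, $\mathbb{E}[\operatorname{cut}(A)]=\sum_{i\neq j}a_i(1-a_j)w_{ij}=N$, so the rounded cut matches the fractional numerator in expectation. The loss is confined to the volume: $\vol(A)=\sum_i\vol(i)X_i$ has mean $M/2$ and variance $\sum_i\vol(i)^2a_i(1-a_i)\le\tfrac14\bigl(\max_i\vol(i)\bigr)M$, so applying Chebyshev's inequality to the two one-sided deviations $\vol(A)-M/2\ge\varepsilon M/2$ and $M/2-\vol(A)\ge\varepsilon M/2$ and adding, and using $M=\sum_i\vol(i)\ge n\min_i\vol(i)$, gives
\begin{equation*}
\mathbb{P}\bigl[\,\vol(A)\le(1-\varepsilon)\tfrac M2\ \text{or}\ \vol(A^c)\le(1-\varepsilon)\tfrac M2\,\bigr]\ \le\ \frac{2\max_i\vol(i)}{\varepsilon^2M}\ \le\ \frac{2\gamma}{\varepsilon^2n}.
\end{equation*}
Let $\mathcal{G}$ be the complementary event. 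On $\mathcal{G}$ the set $A$ is a proper non-empty subset of $V$ with $\min\{\vol(A),\vol(A^c)\}>(1-\varepsilon)M/2$, so the definition of $h_G$ gives $\operatorname{cut}(A)\ge h_G\min\{\vol(A),\vol(A^c)\}>h_G(1-\varepsilon)\tfrac M2$ there; hence $\operatorname{cut}(A)\,\mathbf{1}_{\mathcal{G}}\ge h_G(1-\varepsilon)\tfrac M2\,\mathbf{1}_{\mathcal{G}}$ pointwise, and taking expectations,
\begin{equation*}
N=\mathbb{E}[\operatorname{cut}(A)]\ \ge\ \mathbb{E}[\operatorname{cut}(A)\,\mathbf{1}_{\mathcal{G}}]\ \ge\ h_G(1-\varepsilon)\tfrac M2\,\mathbb{P}(\mathcal{G})\ \ge\ h_G(1-\varepsilon)\tfrac M2\Bigl(1-\frac{2\gamma}{\varepsilon^2n}\Bigr).
\end{equation*}
Dividing by $M/2$ yields $\tilde h_G\ge h_G(1-\varepsilon)\bigl(1-\tfrac{2\gamma}{\varepsilon^2n}\bigr)$, i.e.\ the assertion; if the bracket is non-positive the assertion is vacuous, while the displayed chain remains valid because $\mathbb{P}(\mathcal{G})\ge1-\tfrac{2\gamma}{\varepsilon^2n}$ unconditionally.

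The crux is controlling the balance of the rounded set: the real danger is that $(A,A^c)$ comes out so lopsided that the denominator $\min\{\vol(A),\vol(A^c)\}$ collapses relative to $M/2$, and then the comparison with $h_G$ degrades. This is kept in check by two ingredients working together: the equipartition normalization $\norm\rho=\norm\eta=M/2$ from Lemma \ref{lemma:realization of cheeger at equipartition}, which pins $\mathbb{E}[\vol(A)]$ exactly at $M/2$ (without it one must separately handle an optimal partition with $\norm\rho\ne M/2$, where the rounded volume is centred off $M/2$ and the bookkeeping becomes delicate); and a second-moment estimate, which certifies that $\vol(A)$ stays within a $(1-\varepsilon)$-factor of $M/2$ with probability at least $1-\tfrac{2\gamma}{\varepsilon^2n}$. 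A minor but essential point is that $\operatorname{cut}(A)\ge h_G\min\{\vol(A),\vol(A^c)\}$ is only available when $A$ is proper and non-empty, which is why the estimate is run through $\mathbf{1}_{\mathcal{G}}$ and expectations rather than by isolating one good realization and bounding its cut by Markov's inequality, which is too lossy here.
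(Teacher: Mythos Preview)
Your proof is correct and follows essentially the same approach as the paper: realize $\tilde h_G$ at an equipartition via Lemma~\ref{lemma:realization of cheeger at equipartition}, round independently with Bernoulli coins, control $\vol(A)$ by Chebyshev, and push the Cheeger lower bound on $\operatorname{cut}(A)$ through expectation. The only cosmetic differences are that the paper bounds $\var(\vol(A))$ via $a_i(1-a_i)\le a_i$ together with $\sum_i a_i\vol(i)=M/2$ (yielding $\vol_{\max}M/2$) rather than your $a_i(1-a_i)\le\tfrac14$, and it phrases the final step as ``$Y\ge (1-\varepsilon)h_G M/2$ with high probability, hence $\mathbb{E}[Y]\ge\cdots$'' rather than via the indicator $\mathbf 1_{\mathcal G}$; both routes land on the same inequality.
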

%
%
\begin{proof}
	Let $h$ be the Cheeger constant of $G$ and $\delta h$ be the fractional Cheeger constant of $G$, where $0\leq  \delta\leq 1$.
	If $\delta=1$ then there is nothing to prove.
	So we assume that $\delta< 1$.
	Then from Lemma \ref{lemma:realization of cheeger at equipartition} we can find a fractional partition$(\rho, \eta)$ of $V$ which realizes the fractional Cheeger constant of $G$ and has the property that $\norm{\rho}=\norm{\eta}$.
	Write $p_i=\rho(i)$, so that $p_1\vol(1) + \cdots + p_n\vol(n)=\vol(G)/2$.
	Since the fractional Cheeger constant of $G$ is $\delta h$, we have $f(p_1 , \ldots, p_n)= \delta h \vol(G)/2$.

	Let $R_1 , \ldots, R_n$ be independent random variables such that $R_i$ takes the value $1$ with probability $p_i$ and takes the value $0$ with probability $1-p_i$.
	Write $L_j$ to denote $1-R_j$ for each $1\leq j\leq n$.
	\footnote{We can think of the tuple $(R_1 , \ldots, R_n)$ as a random true partition: If $R_i=1$ then the $i$-th vertex goes ``right" and if $L_i=1$ then the $i$-th vertex goes ``left."}
Let $Y=\sum_{i, j=1}^n R_i(1-R_j)w_{ij}$, and $Z=\sum_{i=1}^n R_i\vol(i)$.
It is clear that
\begin{equation}
	\E[Y] = f(p_1 , \ldots, p_n) = \delta h\vol(G)/2\quad  \text{ and }\quad \E[Z] = \vol(G)/2
\end{equation}
The variance of $Z$ is given by
\begin{align}
	\begin{split}
		\var(Z) &= \sum_{i=1}^n \E[R_i^2]\vol(i)^2 - \E[R_i]^2\vol(i)^2\\
		&= \sum_{i=1}^n p_i\vol(i)^2- p_i^2\vol(i)^2\\
		&\leq \vol_{\max}\sum_{i=1}^n p_i\vol(i)\\
		& = \vol_{\max}\vol(G)/2
	\end{split}
\end{align}
where $\vol_{\max}=\max\set{\vol(i):\ 1\leq i\leq n}$

Now let $\varepsilon$ be a positive number between $0$ and $1$.
By Chebychef's inequality we have
\begin{align}
	\begin{split}
		P(|Z-\vol(G)/2|\geq \varepsilon \vol(G)/2) &=P(|Z-\E[Z]|\geq \varepsilon \E[Z])\\
		&\leq \frac{\var(Z)}{\varepsilon^2 \E[Z]^2}\\
		&\leq \frac{\vol_{\max}\vol(G)/2}{\varepsilon^2 \vol(G)^2/4} = \frac{2\vol_{\max}}{\varepsilon^2\vol(G)}\leq \frac{\vol_{\max}}{\vol_{\min}} \frac{2}{\varepsilon^2 n}
	\end{split}
\end{align}
Write $\gamma$ to denote $\vol_{\max}/\vol_{\min}$.
So we have from the above equation that
\begin{equation}
		\label{repeat this equation}
	P(|Z-\vol(G)/2|\geq \varepsilon\vol(G)/2) \leq \frac{2\gamma}{\varepsilon^2 n}
\end{equation}
Thus with probability at least $1-2\gamma/(\varepsilon^2n)$ we have that $|Z- \vol(G)/2| \leq \varepsilon \vol(G)/2$.
But whenever $|Z- \vol(G)/2|\leq \varepsilon \vol(G)/2$, we have that
\begin{equation}
	\frac{Y}{(1-\varepsilon) \vol(G)/2}\geq \frac{Y}{\min\set{Z, \vol(G)-Z}} \geq h_G = h
\end{equation}
So with probability at least $1- 2\gamma/(\varepsilon^2 n)$ we have $Y\geq (1-\varepsilon) h\vol(G)/2$.
Therefore, since $Y$ takes only positive values, we have
\begin{equation}
	\lrp{1- \frac{2\gamma}{\varepsilon^2 n}} \frac{(1-\varepsilon) h\vol(G)}{2}
	\leq
	P\lrb{Y\geq \frac{(1-\varepsilon) h\vol(G)}{2}} \frac{(1-\varepsilon)h\vol(G)}{2}  \leq \E[Y] = \frac{\delta h\vol(G)}{2}
\end{equation}
This yields
\begin{equation}
	\label{equation:control on delta n}
	\lrp{1- \frac{2\gamma}{\varepsilon^2 n}} (1-\varepsilon) \leq \delta
\end{equation}
and we are done.
\end{proof}

\begin{rmk}
	The above result shows the elementary fact that there is no family $G_1, G_2, G_3 , \ldots$ of degree bounded graphs such that $h_{G_n}>1/2$ for all $n$, since $h_{W_{G_n}}\leq 1/2$ for all $n$.
\end{rmk}

\begin{rmk}
	The bound obtained in the above result is poor if $\gamma$ is of the order of $n$.
   However, if $G$ is a regular graph (more generally, a regular weighted loopless graph) with a large vertex set, then the above bound shows that the Cheeger constant of the graphon corresponding to $G$ is a good proxy for the Cheeger constant of $G$.
\end{rmk}

\begin{rmk}
	If $G$ is a regular weighted loopless graph, then using Azuma's inequality instead of Chebychef's, one gets an improved bound for $\delta$, namely
	\begin{equation}
		\lrp{1- \frac{2}{e^{n\varepsilon^2/8}}} (1-\varepsilon) \leq \delta, \quad \forall \varepsilon>0
	\end{equation}
\end{rmk}

\subsection{Bottom of the Spectrum of a graph versus that of the corresponding graphon}
\label{section:comparing the bottom of the spectrum}
Let $G=(V=[n], w)$ be a connected weighted graph and $W$ be the corresponding graphon.
We will show that $\lambda_W$ is same as the second eigenvalue of the normalized Laplacian of $G$.

Define the partition $\mc P$ of $I$ as
$$
	\mc P
	=
	\lrset{\left[0, \frac{1}{n}\right), \left[ \frac{1}{n}, \frac{2}{n}\right), \left[\frac{2}{n}, \frac{3}{n}\right), \ldots, \left[\frac{n-2}{n}, \frac{n-1}{n}\right), \left[\frac{n-1}{n}, 1\right]}
$$
and let $\mc A$ be the $\sigma$-algebra on $I$ generated by $\mc P$.
Also define inner product $\ab{-, -}_V$ on the vector space of all functions $V\to \R$ by declaring
\begin{equation}
	\ab{g, h}_V = \sum_{u\in V} g(u)h(u) \vol(u)
\end{equation}
Recall that the bottom of the spectrum of the Laplacian $\Delta_W$ of $W$ is defined as
\begin{equation}
	\lambda_W
	=
	\inf_{f\in \one^\perp_v: f\neq 0} \frac{\ab{\Delta_W f, f}_v}{\ab{f, f}_v}
	=
	\inf_{f\in \one^\perp_v: f\neq 0} \frac{\norm{df}_e^2}{\norm{f}_v^2}
\end{equation}
On the other hand, the smallest non-zero eigenvalue of the (normalized) Laplacian $\Delta_G$ of the graph $G$ is
\begin{equation}
	\lambda_G = \inf \frac{\ab{\Delta_G g, g}_V}{\ab{g, g}_V}
\end{equation}
where the infimum is taken over all nonzero $g:V\to \R$ such that $\ab{g, \one}_V=0$.
It is easy to see that if $f:I\to \R$ is any map such that $f|_P$ is constant for each $P\in \mc P$ and satisfies $\ab{f, \one}_v =0$, then
\begin{equation}
	\frac{\ab{\Delta_W f, f}_v}{\ab{f, f}_v} \geq \lambda_G
\end{equation}
and taking infimum over all such functions $f$ leads to an equality in the above.
So it is clear that $\lambda_G\geq \lambda_W$.
We show below that $\lambda_W\geq \lambda_G$, and hence $\lambda_G=\lambda_W$.
We will make use of the notion of conditional expectation, though that is not necessary.

Let $f\in \one^\perp_v$ be an arbitrary nonzero map in $L^2(I, \nu)$, where recall that $\nu$ is a measure on $I$ defined by setting $\nu(A)=\int_A d_W(x)\ dx$ for each Borel set $A$ in $I$.
Further assume that $\norm{f}_v^2=1$.
It is enough to show that $\norm{df}^2_e\geq \lambda_G$.
Let $F:I\to \R$ be the function defined as $F=\E[f|\mc A]$.
Now we have
\begin{align}
	\begin{split}
		\label{equation:massage one}
		\norm{df}_e^2 &= \int_I\int_I (f(y)-f(x))^2W(x, y) \ dx dy\\
		&= 2\int_I f^2d_W\ d\mu_L - 2\int_I\int_I f(x)f(y)W(x, y)\ dxdy\\
		&= 2 - 2\int_I\int_I f(x)f(y)W(x, y) \ dxdy\\
	\end{split}
\end{align}
A simple computation shows that
\begin{equation}
	\int_I\int_I f(x)f(y)W(x, y)\ dxdy = \int_I\int_I F(x) F(y) W(x, y) \ dxdy
\end{equation}
So from Equation \ref{equation:massage one} we have
\begin{equation}
	\label{equation:end of massage one}
	\norm{df}_e^2 = 2-2\int_I\int_I F(x) F(y) W(x, y) \ dxdy
\end{equation}
Further, since $d_W$ is $\mc A$-measurable, we have
\begin{equation}
	\E[fd_W|\mc A] = d_W\E[f|\mc A] = Fd_W
\end{equation}
Therefore $\int_I Fd_W\ d\mu_L = 0$, that is, $\ab{F, \one}_v=0$.
Also, since $F$ is constant on each member of $\mc P$, we have
\begin{equation}
	\frac{\norm{dF}_e^2}{\norm{F}_v^2} \geq \lambda_G
\end{equation}
provided $F$ is not identically zero.
Therefore, whether or not $F$ is identically zero, we have
\begin{align}
	\begin{split}
		\label{equation:massage two}
		\norm{dF}_e^2 &\geq \lambda_G \norm{F}_v^2\\
		\ra \int_I\int_I (F(y)-F(x))^2 W(x, y) \ dxdy &\geq \lambda_G \int_I F^2d_W\ d\mu_L\\
		\ra 2\int_I F^2d_W\ d\mu_L - 2\int_I\int_I F(x)F(y) W(x, y)\ dxdy &\geq \lambda_G\int_IF^2d_W\ d\mu_L\\
		2- 2\int_I\int_I F(x)F(y) W(x, y)\ dxdy &\geq 2-(2-\lambda_G)\int_IF^2d_W\ d\mu_L\\
	\end{split}
\end{align}
But
\begin{align}
	\begin{split}
		\E[f^2 d_W|\mc A] = d_W\E[f^2|\mc A] \geq d_W\E[f|\mc A]^2 = F^2d_W
	\end{split}
\end{align}
Therefore
\begin{equation}
	1 = \int_If^2d_W\ d\mu_L \geq \int_I F^2d_W\ d\mu_L
\end{equation}
and hence
\begin{equation}
	\lambda_G = 2-(2-\lambda_G) \leq 2 - (2-\lambda_G)\int_IF^2d_W\ d\mu_L
\end{equation}
where we have used the simple fact that $2-\lambda_G$ is non-negative.\footnote{See also the Buser inequality for graphons (Theorem \ref{buser-graphon}).}
Using this in Equation \ref{equation:massage two} we have
\begin{equation}
	2- 2\int_I\int_I F(x)F(y) W(x, y)\ dxdy \geq \lambda_G
\end{equation}
Finally, using this in Equation \ref{equation:end of massage one} we have
\begin{equation}
	\norm{df}_e^2 \geq \lambda_G
\end{equation}
and we are done.

\begin{rmk}
	In conjunction  with Lemma \ref{lemma:comparing cheeger constants} it follows that one can recover Cheeger-Buser type inequalities for regular graphs once we have proven the same for graphons.
\end{rmk}

\section{The Cheeger-Buser Inequalities for Graphons}

\subsection{Convergence of Cheeger constants}
\label{subsection:convergence of cheeger constants} The aim of this subsection is to provide an example of a sequence  of graphons $W_n$ converging to a graphon $W$ such that the corresponding Cheeger constants {\it do not } converge. This preempts the possibility of deducing the Cheeger inequality for graphons directly from that of finite weighted graphs.

A \define{kernel} is a bounded symmetric measurable function $U:I^2\to \R$.
Thus a graphon is nothing but a kernel taking values in the unit interval.
The set of all all kernels $\mc W$ is naturally a vector space over $\R$.
The \define{cut norm} of a kernel $U\in \mc W$ is defined as
\begin{equation}
	\norm{U}_{\Box} = \sup_{A, B\subseteq I}\lrmod{\int_{A\times B} U}
\end{equation}
This makes $\mc W$ into a normed linear space.
Note that the cut norm of a kernel is dominated by the $L^1$ norm with respect to  the Lebesgue measure.

A natural approach to proving the Cheeger-Buser inequalities is the following.
Let $W$ be a graphon and assume for simplicity that the degree of $W$ is bounded away from $0$, i.e, there is $d>0$ such that $d_W\geq d$ $\mu_L$-a.e.
Let $\mc P_n$ be the partition of $I$ defined as
\begin{equation}
	\mc P_n=\set{[0, 1/2^n), [1/2^n, 2/2^n) , \ldots, [(2^n-1)/2^n, 1]}
\end{equation}
and define the partition $\mc Q_n$ of $I^2$ as $\mc Q_n=\set{P\times P':\ P, P'\in \mc P_n}$.
Let $\mc F_n$ be the $\sigma$-algebra on $I$ generated by the partition $\mc Q_n$.
Let $U_n=\E[W|\mc F_n]$ and $H_n$ be the weighted graph on $\set{1 , \ldots, 2^n}$ which gives rise to the graphon $U_n$.
Finally, define $G_n$ as the weighted graph on $\set{1 , \ldots, 2^n}$ obtained by `making $H_n$ loopless', that is, by assigning zero weights to the loops in $H_n$ and keeping all other weights intact.
Let $W_n$ be the graphon corresponding to $G_n$.
Note that $\norm{W_n-U_n}_1\leq 1/2^n$.
By the martingale convergence theorem (\cite[Theorem 5.5]{einsiedler_ward_ergodic_theory}) we have that the sequence $(U_n)$ converges to $W$ in the $L^1$-norm, and hence so does the sequence $(W_n)$.

So we have a sequence $(G_n)$ of loopless weighted graphs such that
\begin{enumerate}
	\item $G_n$ has $2^n$ vertices.
	\item $\vol_\text{max}(G_n)/\vol_\text{min}(G_n)\leq d/2$ for all large enough $n$.
	\item $\norm{W-W_n}_1$, and hence $\norm{W-W_n}_\Box$, approaches $0$ as $n$ approaches $\infty$, where $W_n$ is the graphon corresponding to $G_n$

\end{enumerate}

By Lemma \ref{lemma:comparing cheeger constants} it follows that the Cheeger constant of $W_n$ is a good proxy for the Cheeger constant of $G_n$.
Also, from Section \ref{section:comparing the bottom of the spectrum}, we know that $\lambda_{G_n}=\lambda_{W_n}$. It is shown in \cite{bclsv1,bclsv2} that if $W_n \to W$ in the cut-norm then the bottom of the spectrum of the {\it unnormalized } Laplacian of $W_n$ converges to that of $W$. This suggests a similar convergence result for the normalized  Laplacian at least with a uniform lower bound on the degree $d_W(x)$. 

If we were to try to deduce the Cheeger-Buser inequalities for the graphon $W$
from the classical Cheeger-Buser inequalities for weighted graphs, we would thus need to establish the following:

\begin{quote}
 Let $W_n$ be a sequence of graphons converging to a graphon $W$ in the cut norm.
	Then $h_{W_n}\to h_W$ as $n\to \infty$.
\end{quote}

But the above statement is not necessarily true.
We will in fact give a {\bf counterexample} to  the following statement.

\begin{quote}
 Let $W_n$ be a sequence of graphons converging to a graphon $W$ in the $L^1$-norm.
	Then $h_{W_n}\to h_W$ as $n\to \infty$.
\end{quote}

For each $n$ define a graphon $W_n$ as (see the following figure)

$$
	W_n(x, y) =
	\left\{
	\begin{array}{ll}
		1 & \mbox{if } 0\leq x\leq 1/2-1/n\\
		&\quad 1/2+1/n\leq y\leq 1, \\
		\\
		1 & \mbox{if } 1/2+1/n\leq x\leq 1\\
		&\quad 0\leq y\leq 1/2-1/n\\
		\\
		1 & \mbox{if } 1/2-e^{-n} - 1/n\leq x\leq 1/2+e^{-n} + 1/n\\
		&\quad 1/2 -e^{-n} -1/n \leq y\leq 1/2 + e^{-n}+1/n\\
		0 & \mbox{otherwise}
	\end{array}
	\right.
$$

\begin{figure}[H]
	\centering
	\includegraphics[scale=1.5]{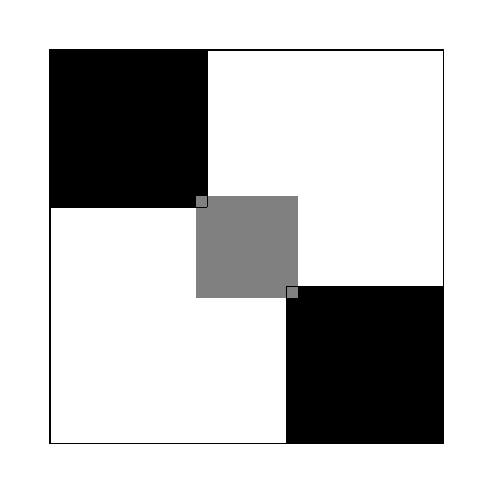}
	\caption{The graphon $W_n$.}
\end{figure}
\noindent
Note that each $W_n$ is connected.
Let $W$ be the graphon corresponding to the complete graph on $2$ vertices.
It is clear that $W_n$ converges to $W$ in the $L^1$-norm.
Let us estimate the Cheeger constant of $W_n$.
Define $A_n$ as the interval $(1/2-e^{-n}-1/n, 1/2+e^{-n}+1/n)$.
Then
\begin{equation}
	h_{W_n}\leq h_{W_n}(A_n) = \frac{2\times e^{-n}\times \frac{2}{n}}
					{
						\frac{2}{n} \times \lrp{ \frac{2}{n} + 2e^{-n}}
					}
\end{equation}
Thus $h_{W_n}\to 0$ as $n\to \infty$.
But $h_W=1/2$ and thus we see that the Cheeger constant of $W_n$ does not converge to that of $W$.
%
%
%

\subsection{Buser Inequality for Graphons}\label{sec:buser-graphon}
\begin{theorem}[Buser Inequality]
	Let $W$ be a connected graphon.
	Then
	\begin{equation}
		\lambda_W\leq 2h_W \text{ and } \lambda_W\leq g_W
	\end{equation}
	\label{buser-graphon}
\end{theorem}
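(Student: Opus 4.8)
The plan is to exhibit, for any measurable $A\subseteq I$ with $0<\mu_L(A)<1$, a test function $f\in\constone^\perp_v$ whose Rayleigh quotient $\norm{df}_e^2/\norm{f}_v^2$ is bounded above by $2\,h_W(A)$, and separately by $e_W(A,A^c)/(\vol_W(A)\vol_W(A^c))$. Taking the infimum over all such $A$ then yields both $\lambda_W\leq 2h_W$ and $\lambda_W\leq g_W$ from the variational characterization of $\lambda_W$ recalled at the end of Section \ref{graphon}. The natural candidate is a suitably normalized and recentered indicator: start from $\chi_A$, and set $f=\alpha\chi_A-\beta\chi_{A^c}$ with $\alpha,\beta>0$ chosen so that $\ab{f,\constone}_v=0$, i.e. $\alpha\vol_W(A)=\beta\vol_W(A^c)$.

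First I would compute the two relevant quantities for this $f$. Since $df(x,y)=f(y)-f(x)$ vanishes unless $(x,y)$ crosses between $A$ and $A^c$, on which it equals $\pm(\alpha+\beta)$, we get $\norm{df}_e^2=(\alpha+\beta)^2 e_W(A,A^c)$ (using that $\eta$ restricted to $E_W$ integrates $W$, and the edge set $E$ is an orientation so each crossing pair is counted once — matching the computations in Lemma \ref{d-cont}). On the denominator side, $\norm{f}_v^2=\alpha^2\vol_W(A)+\beta^2\vol_W(A^c)$. Now impose $\alpha\vol_W(A)=\beta\vol_W(A^c)=:c$; then $\norm{f}_v^2=c\alpha+c\beta=c(\alpha+\beta)$ while $\alpha+\beta=c\left(\frac{1}{\vol_W(A)}+\frac{1}{\vol_W(A^c)}\right)=\frac{c}{\vol_W(A)\vol_W(A^c)}$ (using $\vol_W(A)+\vol_W(A^c)=\vol_W(I)$... actually $\vol_W(A)+\vol_W(A^c)=\vol_W(I)$, and one normalizes; the cleanest choice is $\alpha=1/\vol_W(A)$, $\beta=1/\vol_W(A^c)$). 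With that choice $\norm{f}_v^2=\frac{1}{\vol_W(A)}+\frac{1}{\vol_W(A^c)}=\frac{\vol_W(I)}{\vol_W(A)\vol_W(A^c)}$ and $\norm{df}_e^2=\left(\frac{1}{\vol_W(A)}+\frac{1}{\vol_W(A^c)}\right)^2 e_W(A,A^c)=\frac{\vol_W(I)^2}{\vol_W(A)^2\vol_W(A^c)^2}e_W(A,A^c)$, so the Rayleigh quotient equals $\frac{\vol_W(I)\,e_W(A,A^c)}{\vol_W(A)\vol_W(A^c)}$. Hmm — this carries an extra factor $\vol_W(I)$ relative to $g_W(A)$; to remove it one instead keeps $\mu$ a probability measure and normalizes $\nu$ so that $\nu(I)=1$, i.e. replaces $W$ by $W/\vol_W(I)$, which rescales $\lambda_W$ and $g_W$ compatibly. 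I would sort out this normalization convention first, since it is where an off-by-$\vol_W(I)$ error would creep in; with the paper's conventions the constant $2$ should emerge as follows.

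For the bound $\lambda_W\leq 2h_W$: assume WLOG $\vol_W(A)\leq\vol_W(A^c)$, so $\min\{\vol_W(A),\vol_W(A^c)\}=\vol_W(A)$ and $h_W(A)=e_W(A,A^c)/\vol_W(A)$. The Rayleigh quotient above is $\frac{\vol_W(I)\,e_W(A,A^c)}{\vol_W(A)\vol_W(A^c)}=\frac{\vol_W(I)}{\vol_W(A^c)}\cdot h_W(A)\leq 2\,h_W(A)$ because $\vol_W(A^c)\geq\tfrac12\vol_W(I)$ when $\vol_W(A^c)\geq\vol_W(A)$. (This is exactly where the factor $2$ comes from, and it is clean regardless of the global normalization since $\vol_W(I)/\vol_W(A^c)$ is scale-invariant.) Similarly $\frac{\vol_W(I)\,e_W(A,A^c)}{\vol_W(A)\vol_W(A^c)}$ — after normalizing $\vol_W(I)=1$ if that is the convention, or comparing directly — gives the $g_W$ bound. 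I do not expect a serious obstacle here: the only subtleties are (i) fixing the normalization of $\nu$ so the constants match the stated inequality, (ii) checking $df$ genuinely lies in $L^2(E_W,\eta)$ for this bounded $f$, which is immediate since $\chi_A$ is bounded and $W\in L^1$, and (iii) confirming $f\neq 0$ in $L^2(I,\nu)$, which holds because $\vol_W(A),\vol_W(A^c)>0$ — the latter using connectedness of $W$, which forces $d_W>0$ a.e. and hence $\vol_W(A)>0$ whenever $\mu_L(A)>0$. Taking the infimum over admissible $A$ completes both inequalities.
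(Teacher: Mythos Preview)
Your approach is exactly the paper's: the same test function $f=\frac{1}{\vol_W(A)}\chi_A-\frac{1}{\vol_W(A^c)}\chi_{A^c}$, the same Rayleigh quotient computation yielding $\bigl(\frac{1}{\vol_W(A)}+\frac{1}{\vol_W(A^c)}\bigr)e_W(A,A^c)$, and the same estimate $\frac{1}{\vol_W(A)}+\frac{1}{\vol_W(A^c)}\leq \frac{2}{\min\{\vol_W(A),\vol_W(A^c)\}}$ for the $2h_W$ bound.

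The only point where you go off track is the $g_W$ bound, where you worry about an ``extra factor $\vol_W(I)$'' and start speculating about renormalizing $W$. No renormalization is needed: since a graphon by definition takes values in $[0,1]$, one has $\vol_W(I)=\int_{I^2}W\leq 1$, and hence
\[
\lrp{\frac{1}{\vol_W(A)}+\frac{1}{\vol_W(A^c)}}e_W(A,A^c)
=\frac{\vol_W(I)\,e_W(A,A^c)}{\vol_W(A)\vol_W(A^c)}
\leq \frac{e_W(A,A^c)}{\vol_W(A)\vol_W(A^c)}.
\]
This is precisely what the paper uses (``since $\vol(A)+\vol(B)\leq 1$''). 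Once you insert that one-line observation, your argument is complete and identical to the paper's.
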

\begin{proof}
	We adapt the proof of Lemma 2.1 in \cite{fan_chung_spectral_graph_theory}.
	Let $A\sqcup B$ form a measurable partition of $I$ with $0<\mu_L(A)< 1$.
	Define $f:I\to \R$ as
	\begin{equation}
		f(x) =
		\twopartdef{ \ds\frac{1}{\vol(A)}}{x\in A}{-\ds\frac{1}{\vol(B)}}{x\in B}
	\end{equation}
	Then $f\in \constone^\perp_v$.
	Now
	\begin{align}
		\begin{split}
			\lambda_W &\leq \frac{\norm{df}^2_e}{\norm{f}^2_v} \\
			&=  \frac{\int_E (f(x)-f(y))^2 W(x, y)\ dydx}{\int_0^1 f(x)^2 d_W(x)\ dx}\\
			&=  \frac{\int_{I\times I} (f(x)-f(y))^2 W(x, y)\ dydx}{2\int_0^1 f(x)^2 d_W(x)\ dx}\\
			&=\frac{\int_{A\times B} (f(x)-f(y))^2 W(x, y)\ dydx + \int_{B\times A} (f(x)-f(y))^2 W(x, y)\ dydx}{2\lrb{\int_A f(x)^2 d_W(x)\ dx + \int_B f(x)^2d_W(x)\ dx}}\\
			&=  \frac{\lrp{ \frac{1}{\vol(A)} + \frac{1}{\vol(B)}}^2 \lrp{\int_{A\times B} W(x, y)\ dxdy + \int_{B\times A} W(x, y)\ dxdy}}{2\lrb{\frac{1}{\vol(A)^2}\vol(A) + \frac{1}{\vol(B)^2}\vol(B)}}\\
			&= \lrp{\frac{1}{\vol(A)} + \frac{1}{\vol(B)}} \int_{A\times B} W(x, y)\ dxdy\\
			&\leq  2 \frac{\int_{A\times B} W}{ \min\set{\vol(A), \vol(B)}}
		\end{split}
	\end{align}
	Since $B=A^c$, and since the above holds for all choices of $A$ with $0< \mu_L(A)< 1$, we have $\lambda_W\leq 2h_W$.
	From the penultimate inequality above we also get
	\begin{equation}
		\lambda_W\leq \ds \frac{ \int_{A\times B} W(x, y)\ dxdy}{\vol(A)\vol(B)}
	\end{equation}
	since $\vol(A)+\vol(B)\leq 1$.
	This leads to $\lambda_W\leq g_W$.
\end{proof}

\subsection{The Co-area Formula for Graphons}

Consider a finite graph $G=(V ,E)$ and let $f:V\to \R$ be any map.
Orient the edges of $G$ in such a way that for each oriented edge $e$ we have $f(e^+)\geq f(e^-)$.
Let $\gamma_0< \gamma_1 < \cdots < \gamma_k$ be all the reals in the image of $f$.
Define $S_i=\set{v\in V:\ f(v)\geq \gamma_i}$.
Then we have
\begin{equation}
	\label{baby coarea formula for graphs}
	\sum_{e\in E} df(e) = \sum_{i=1}^m (\gamma_i-\gamma_{i-1}) |E(S_i^c, S_i)|
\end{equation}
where $E(S_i^c, S_i)$ denotes the set of all the edges in $G$ which have their tails in $S_i^c$ and heads in $S_i$.
To see why Equation \ref{baby coarea formula for graphs} is true, we fix an edge $e$ and see how much it contributes to the sum on the RHS.
We add $\gamma_i-\gamma_{i-1}$ for each $i$ such that $e^-\in S_i^c$ and $e^+\in S_i$.
This adds up to a total of $df(e)$, which is the same as the contribution of $e$ to the LHS.

If $G$ were a weighted graph with weight function $w:E\to \R^+$, Equation \ref{baby coarea formula for graphs} takes the form
\begin{equation}
	\label{weighted coarea formula for graphs}
	\sum_{e\in E} df(e)w(e) = \sum_{i=1}^m (\gamma_i-\gamma_{i-1}) e_w(S_i^c, S_i)
\end{equation}
where $e_w(S_i^c, S_i)$ denotes the sum of weights of all the edges which have their tails in $S_i^c$ and heads in $S_i$.

Let us see how Equation \ref{weighted coarea formula for graphs} generalizes for graphons.
Let $W$ be a graphon and $f:I\to \R$ be in $L^2(I, \nu)$.
Define $E_f$ to be the set $\set{(x, y)\in I^2:\ f(y)> f(x)}$.
Let $S_t$ denote the set $f^{-1}(t, \infty)$.
Then
\begin{equation}
	\label{useless coarea formula for graphons}
	\int_{E_f} df(x, y) W(x, y)\ dxdy = \int_{-\infty}^\infty e_W(S_t^c, S_t)\ dt
\end{equation}
This can be easily proved using Fubini's theorem.
We shall however need a slight variant of this formula in order to establish Cheeger's inequality.

\begin{theorem}[Co-area formula for graphons]\label{coarea-graphon}
	Let $W$ be a graphon and $f:I\to \R$ be an arbitrary map in $L^2(I, \nu)$.
Define $f_+:I\to \R$ and $f_-:I\to \R$ as $f_+=\max\set{f, 0}$ and $f_-=-\min\set{f, 0}$.
Let $S_t=f^{-1}(t, \infty)$.
Then
\begin{align}
	\label{coarea formula for graphons}
	\begin{split}
		\int_{E_f} |df^2_+|W &= \int_0^\infty e_W(S_{\sqrt t}^c, S_{\sqrt t})\ dt = \int_0^\infty 2te_W(S_t^c, S_t)\ dt,\quad \text{and}\\
		\int_{E_f} |df_-^2|W &= \int_{0}^\infty e_W(S_{-\sqrt t}^c, S_{-\sqrt t}) = \int_0^\infty 2te_W(S_{-t}^c, S_{-t})\ dt
	\end{split}
\end{align}
\end{theorem}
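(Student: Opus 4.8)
The plan is to reduce the theorem to the already-announced (easy) co-area formula \eqref{useless coarea formula for graphons} by a change of variables, applied separately to the functions $f_+^2$ and $f_-^2$. First I would observe that it suffices to prove the two displayed chains of equalities one at a time, and that within each chain the second equality is an elementary substitution $t \mapsto t^2$ (so $dt \mapsto 2t\,dt$) in a one-variable integral, valid because the integrand is non-negative and measurable; hence the real content is the first equality in each line. I would also record at the outset the elementary pointwise identities relating super-level sets of $f$, $f_+$, $f_-$, $f_+^2$, and $f_-^2$: for $s \geq 0$ we have $(f_+^2)^{-1}(s,\infty) = f^{-1}(\sqrt s,\infty) = S_{\sqrt s}$, and for $s \geq 0$ we have $(f_-^2)^{-1}(s,\infty) = f^{-1}(-\infty, -\sqrt s) = S_{-\sqrt s}^c$ up to the null ambiguity at the boundary value; while for $s<0$ these super-level sets are all of $I$ (so they contribute nothing to a cut). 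Also note $E_{f_+^2} \supseteq$ the relevant part of $E_f$ and that on the complementary part $df_+^2 = 0$, so integrating $|df_+^2|W$ over $E_f$ is the same as integrating it over $E_{f_+^2}$ — this lets me apply \eqref{useless coarea formula for graphons} with $f$ replaced by $f_+^2$.

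Concretely, for the first line I would apply the already-established formula \eqref{useless coarea formula for graphons} to the function $g = f_+^2 \in L^2(I,\nu)$ (it lies in $L^2(I,\nu)$ since $f$ does and $|f_+^2| \le |f|\cdot\|f\|_\infty$-type bounds are not even needed — one only needs $g$ measurable for Fubini, and the finiteness of the left side follows from Lemma~\ref{d-cont} applied appropriately, or directly), giving
\begin{equation*}
\int_{E_g} dg(x,y)\, W(x,y)\, dx\, dy = \int_{-\infty}^\infty e_W\big(g^{-1}(t,\infty)^c,\ g^{-1}(t,\infty)\big)\, dt.
\end{equation*}
Since $g = f_+^2 \ge 0$, the super-level set $g^{-1}(t,\infty)$ equals $I$ for $t<0$, so $e_W$ of the corresponding pair is $e_W(\varnothing, I) = 0$ and the integral over $(-\infty,0)$ vanishes; for $t \ge 0$, $g^{-1}(t,\infty) = S_{\sqrt t}$, so the right side collapses to $\int_0^\infty e_W(S_{\sqrt t}^c, S_{\sqrt t})\, dt$, which is the middle term of the first line. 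Matching $\int_{E_g} dg\, W$ with $\int_{E_f} |df_+^2|\, W$ uses the sign/orientation bookkeeping: $E_g = E_{f_+^2} = \{(x,y): f_+(y)^2 > f_+(x)^2\}$, on which $dg = f_+(y)^2 - f_+(x)^2 > 0$, so $dg = |df_+^2|$ there, and on $E_f \setminus E_g$ one has $f_+(y)^2 = f_+(x)^2$ so $|df_+^2| = 0$; hence the two integrals agree. Then the substitution $t = u^2$ turns $\int_0^\infty e_W(S_{\sqrt t}^c, S_{\sqrt t})\, dt$ into $\int_0^\infty 2u\, e_W(S_u^c, S_u)\, du$, giving the last term. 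The second line is identical with $f_-$ in place of $f_+$, noting that $(f_-^2)^{-1}(t,\infty) = S_{-\sqrt t}^c$ for $t \ge 0$ (since $f_-(x)^2 > t \iff f(x) < -\sqrt t$), which is why $S_{-\sqrt t}^c$ and $S_{-\sqrt t}$ appear swapped relative to the first line but the cut $e_W(\cdot,\cdot)$ is symmetric in its two arguments so this is harmless; again substitute $t = u^2$.

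The main obstacle I anticipate is purely bookkeeping rather than conceptual: getting the orientation conventions and the identification of which super-level set plays the role of ``$S^c$'' versus ``$S$'' exactly right, and justifying the reduction of an integral over $E_f$ to one over $E_{f_+^2}$ (or $E_{f_-^2}$) — i.e. checking that the locus where $f$ changes but $f_+^2$ does not contributes zero, and similarly that the boundary-value null sets (where $f = t$ or $f = -\sqrt t$) do not affect $e_W$ or the Lebesgue integral in $t$. I would also need to confirm integrability so that Fubini (hidden inside \eqref{useless coarea formula for graphons}) and the change of variables are legitimate; this follows because $W \le 1$ and $\int_{I^2} |f_+^2(y) - f_+^2(x)|\,W\,dx\,dy < \infty$, which one bounds crudely by $2\int_I f_+^2\, d_W = 2\|f_+\|_v^2 \le 2\|f\|_v^2 < \infty$ using $W(x,y) \le 1$ and $0 \le f_+^2(y) - f_+^2(x)$ on $E_{f_+^2}$ together with Fubini on the positive integrand. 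Once these routine points are dispatched, the theorem is immediate from \eqref{useless coarea formula for graphons}.
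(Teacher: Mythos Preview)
Your proposal is correct and is essentially the same argument as the paper's, just organized differently: the paper writes out the Fubini swap directly (expanding $\int_0^\infty 2t\, e_W(S_t^c,S_t)\,dt$, interchanging the $t$-integral with the $(x,y)$-integral, and recognising $\int_{f_+(x)}^{f_+(y)} 2t\,dt = f_+^2(y)-f_+^2(x)$), whereas you factor the same computation through the preliminary formula~\eqref{useless coarea formula for graphons} applied to $g=f_\pm^2$. One small remark: as stated in the paper,~\eqref{useless coarea formula for graphons} is formulated for $f\in L^2(I,\nu)$, but $f_+^2$ need not be in $L^2(I,\nu)$ when $f$ is merely $L^2$; you correctly note that only measurability is needed for the Tonelli step, so this is harmless, but it means you are in effect reproving~\eqref{useless coarea formula for graphons} rather than citing it---which is exactly what the paper does.
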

\begin{proof}
	We prove the first one.
The second one is similar.
We have by change of variables that
\begin{equation}
	\int_0^\infty e_W(S_{\sqrt t}^c, S_{\sqrt t})\ dt = \int_0^\infty 2te_W(S_t^c, S_t)\ dt
\end{equation}
Now
\begin{align}
	\begin{split}
		\int_0^\infty 2te_W(S_t^c, S_t)\ dt &= \int_0^\infty 2t\lrb{\int_{S_t^c\times S_t} W(x, y)\ dx dy}dt\\
		&= \int_0^\infty\lrb{\int_{I^2} 2t \chi_{S_t^c\times S_t}(x, y) W(x, y)\ dxdy} dt\\
		&= \int_{I^2}\lrb{\int_0^\infty 2t\chi_{S_t^c\times S_t}(x, y) W(x, y) \ dt} dxdy\\
		&= \int_{I^2}\lrb{\int_0^\infty 2t\chi_{S_t^c\times S_t}(x, y)\ dt} W(x, y) dxdy\\
		&= \int_{E_f}\lrb{\int_0^\infty 2t\chi_{S_t^c\times S_t}(x, y)\ dt} W(x, y) dxdy\\
		&= \int_{E_f}\lrb{\int_{f_+(x)}^{f_+(y)} 2t\ dt} W(x, y) dxdy\\
		&= \int_{E_f} (f_+^2(y)-f_+^2(x))W(x, y)\ dxdy\\
		&= \int_{E_f} |df^2_+|W
	\end{split}
\end{align}
as desired.
\end{proof}

\subsection{Cheeger's Inequality for Graphons}

In this subsection we will prove the following.

\begin{theorem}
	\label{cheeger-graphon}
	Let $W$ be a connected graphon.
	Then
	\begin{equation}
		\lambda_W\geq \frac{h^2_W}{8}
	\end{equation}
\end{theorem}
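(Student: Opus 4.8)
The plan is to adapt Cheeger's original Riemannian argument (outlined in the introduction) to the graphon setting, using the co-area formula of Theorem \ref{coarea-graphon} as the central tool. Start with an arbitrary nonzero $g \in \constone^\perp_v$; by homogeneity of the Rayleigh quotient we may normalize so that $\norm{g}_v^2 = 1$. The goal is to show $\norm{dg}_e^2 \geq h_W^2/8$, which together with the variational characterization of $\lambda_W$ finishes the proof. First I would translate $g$ by a constant $c$ to obtain $f = g - c$, where $c$ is chosen as a ``weighted median'' of $g$, i.e. so that the superlevel set $S_0 = \{f > 0\}$ and the sublevel set $\{f < 0\}$ each have $\vol_W$ at most $\vol_W(I)/2$; such a $c$ exists by an intermediate-value argument on the monotone function $t \mapsto \vol_W(f^{-1}(t,\infty))$. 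Since translation does not change $dg = df$, we have $\norm{df}_e^2 = \norm{dg}_e^2$, so it suffices to bound $\norm{df}_e^2$ from below. Note also that $\norm{f}_v^2 = \norm{g}_v^2 + c^2 \geq 1$ since $g \perp_v \constone$, so $\norm{f_+}_v^2 + \norm{f_-}_v^2 = \norm{f}_v^2 \geq 1$; actually for the argument one only needs a lower bound in terms of $\norm{f}_v$, so one works with $f$ directly and divides by $\norm{f}_v^2$ at the end. (Some care: the clean statement is $\norm{df}_e^2 / \norm{f}_v^2 \geq h_W^2/8$, whence $\norm{dg}_e^2 / \norm{g}_v^2 \geq \norm{df}_e^2/\norm{f}_v^2 \geq h_W^2/8$ using $\norm{g}_v \le \norm{f}_v$.)

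Next I would split according to $f_+$ and $f_-$. Write
\begin{equation}
	\norm{df}_e^2 = \int_{E_f} (f(y)-f(x))^2 W(x,y)\ dxdy \geq \int_{E_f}(f_+(y)-f_+(x))^2 W + \int_{E_f}(f_-(y)-f_-(x))^2 W,
\end{equation}
using that $(f(y)-f(x))^2 \geq (f_+(y)-f_+(x))^2 + (f_-(y)-f_-(x))^2$ pointwise (a consequence of $f = f_+ - f_-$ with $f_+, f_- \geq 0$ supported on complementary sets up to where $f=0$; one checks the three sign cases). Then apply Cauchy--Schwarz in the form used in Cheeger's step (3): for the $f_+$ term,
\begin{equation}
	\int_{E_f}(f_+(y)-f_+(x))^2 W \cdot \int_{E_f}(f_+(y)+f_+(x))^2 W \geq \lrp{\int_{E_f}|f_+^2(y)-f_+^2(x)|\,W}^2 = \lrp{\int_{E_f}|df_+^2|\,W}^2,
\end{equation}
and bound the ``sum'' factor crudely: $\int_{E_f}(f_+(y)+f_+(x))^2 W \leq 2\int_{E_f}(f_+^2(y)+f_+^2(x))W \leq 2\int_{I^2}(f_+^2(y)+f_+^2(x))W = 4\norm{f_+}_v^2$. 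So $\int_{E_f}(f_+(y)-f_+(x))^2 W \geq \lrp{\int_{E_f}|df_+^2|\,W}^2 / (4\norm{f_+}_v^2)$, and similarly for $f_-$.

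Now invoke the co-area formula (Theorem \ref{coarea-graphon}): $\int_{E_f}|df_+^2|\,W = \int_0^\infty 2t\, e_W(S_t^c, S_t)\,dt$. Since $t > 0$ implies $S_t \subseteq S_0$ and hence $0 \le \mu_L(S_t) < 1$ (nonemptiness fails only on a measure-zero set of $t$ which doesn't affect the integral; if $S_t$ or its complement is null then $e_W$ is $0$ there too by connectedness considerations, so the inequality is vacuous), the definition of $h_W$ gives $e_W(S_t^c, S_t) = e_W(S_t, S_t^c) \geq h_W \min\{\vol_W(S_t), \vol_W(S_t^c)\} = h_W \vol_W(S_t)$, the last equality because $\vol_W(S_t) \leq \vol_W(S_0) \leq \vol_W(I)/2$. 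Therefore $\int_{E_f}|df_+^2|\,W \geq h_W \int_0^\infty 2t\,\vol_W(S_t)\,dt = h_W \int_{I}\int_0^{f_+(x)} 2t\,dt\, d_W(x)\,dx = h_W \int_I f_+^2(x) d_W(x)\,dx = h_W \norm{f_+}_v^2$, where the Fubini step mirrors the computation inside the proof of Theorem \ref{coarea-graphon}. Combining, $\int_{E_f}(f_+(y)-f_+(x))^2 W \geq h_W^2 \norm{f_+}_v^4 / (4\norm{f_+}_v^2) = h_W^2 \norm{f_+}_v^2 / 4$, and likewise for $f_-$. Adding the two and dividing by $\norm{f}_v^2 = \norm{f_+}_v^2 + \norm{f_-}_v^2$:
\begin{equation}
	\frac{\norm{df}_e^2}{\norm{f}_v^2} \geq \frac{h_W^2}{4}\cdot\frac{\norm{f_+}_v^2 + \norm{f_-}_v^2}{\norm{f_+}_v^2 + \norm{f_-}_v^2}\cdot\frac{1}{?}
\end{equation}
— here I must be slightly careful: the bound $\int_{E_f}(f_+(y)-f_+(x))^2 W \geq h_W^2\norm{f_+}_v^2/4$ added to its $f_-$ analogue gives $\norm{df}_e^2 \geq (h_W^2/4)(\norm{f_+}_v^2 + \norm{f_-}_v^2) = (h_W^2/4)\norm{f}_v^2$, which would give the stronger $h_W^2/4$. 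The factor $8$ instead of $4$ in the statement presumably comes from the extra slack needed when handling the ``sum'' factor or the translation step more carefully; I would reconcile this by being honest about where a factor of $2$ is genuinely lost — most likely in replacing $\norm{g}_v^2$ by $\norm{f}_v^2$ is harmless, so the loss is elsewhere, perhaps in a less sharp bound $\int_{E_f}(f_++f_+)^2 W \le$ something, or in the pointwise split inequality being applied with a constant. \emph{The main obstacle} is exactly this bookkeeping of constants together with the measure-theoretic edge cases (null superlevel sets, the a.e. issues in applying the definition of $h_W$, integrability of $t \mapsto \vol_W(S_t)$), rather than any conceptual difficulty: the architecture — translate, split into $f_\pm$, Cauchy--Schwarz, co-area, definition of $h_W$ — is forced by Cheeger's template and by the co-area formula already proved.
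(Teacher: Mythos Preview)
Your architecture is exactly the paper's: restrict attention to test functions $g\in\constone^\perp_v$ (the paper first reduces to $g\in L^\infty$ via Lemma~\ref{denseapp}, which makes the existence of the median $t_0$ immediate, but your intermediate-value argument works for $L^2$ as well), translate to $f=g-t_0$, split into $f_\pm$, apply Cauchy--Schwarz to pass from $|df_\pm|^2$ to $|df_\pm^2|$, invoke the co-area formula (Theorem~\ref{coarea-graphon}), and finally use the definition of $h_W$ on the superlevel sets $S_t$.

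The one substantive difference is in the Cauchy--Schwarz step, and it is in your favour. The paper bounds the ``sum'' factor by
\[
\int_{E_f}(f_+(x)+f_+(y))^2 W \;\leq\; \int_{I^2}(|f(x)|+|f(y)|)^2 W \;\leq\; 4\norm{f}_v^2,
\]
using the full $|f|$, which forces them afterwards to combine the $f_+$ and $f_-$ estimates via $a^2+b^2\geq (a+b)^2/2$; this is where their extra factor of $2$ enters (Lemma~\ref{passing to squares in graphons}). You instead bound by $4\norm{f_+}_v^2$ and $4\norm{f_-}_v^2$ separately, so that after the co-area step $\int_{E_f}|df_+^2|W\geq h_W\norm{f_+}_v^2$ you obtain directly
\[
\int_{E_f}|df_+|^2 W \;\geq\; \frac{h_W^2\norm{f_+}_v^4}{4\norm{f_+}_v^2} \;=\; \frac{h_W^2}{4}\norm{f_+}_v^2,
\]
and adding the $f_-$ contribution gives $\norm{df}_e^2\geq \tfrac{h_W^2}{4}\norm{f}_v^2$. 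There is no hidden loss to reconcile: your argument genuinely yields $\lambda_W\geq h_W^2/4$, which is sharper than the paper's $h_W^2/8$. (The edge cases you worry about---$\mu_L(S_t)\in\{0,1\}$, or $\norm{f_\pm}_v=0$---are harmless: for $t>0$ connectedness and the median choice rule out $\mu_L(S_t)=1$, while $\mu_L(S_t)=0$ makes both sides of the Cheeger inequality zero; and if $\norm{f_+}_v=0$ then $f_+=0$ a.e.\ and that term contributes nothing on either side.) So stop second-guessing the constant: your proof is complete as written, and the $8$ in the paper is just slack from a coarser organisation of the same estimates.
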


Before we prove  Cheeger's inequality above, we first obtain a more convenient formula (Lemma \ref{denseapp} below) for $\lambda_W$.
%
%
Consider the map $\mc I:L^2(I, \nu)\to \R$ defined as
\begin{equation}
	\mc I(f) = \int_0^1 f(x)d_W(x)\ dx = \ab{f, \constone}_v
\end{equation}
We show that $L^\infty(I)\cap \constone^\perp_v$ is dense in $\constone^\perp_v$.
Let $P:L^2(I, \nu)\to L^2(I, \nu)$ be the map defined as $P(f)= f-\mc I(f)$.
Then $P$ is a bounded linear operator.
Also,  we have
\begin{equation}
	P^2(f)=P(f-\mc I(f))=f-\mc I(f)=P(f)
\end{equation}
So $P^2=P$.
Further,
\begin{align}
	\begin{split}
		\ab{Pf, g}_v -\ab{f, Pg}_v &= \ab{f-\mc I(f), g}_v-\ab{f, g-\mc I(g)}_v\\
		&= -\ab{\mc I(f), g}_v + \ab{f, \mc I(g)}_v\\
		&= -\mc I(f)\mc I(g) + \mc I(f)\mc I(g)\\
		&= 0
	\end{split}
\end{align}
Therefore $P$ is self-adjoint.
This means that $P$ is the orthogonal projection onto its image.
It is clear that $\im(P)\subseteq \constone^\perp_v$, and also that $P$ behaves as the identity when restricted to $\constone^\perp_v$.
Therefore $P$ is the orthogonal projection onto $\constone^\perp_v$.
It is also clear that $P(L^\infty(I))\subseteq L^\infty(I)$. 
 We conclude that $L^\infty(I)\cap \constone^\perp_v$ is dense in $\constone^\perp_v$.

Now let $g\in \constone^\perp_v$ be an arbitrary nonzero vector.
Then both $\norm{dg}_e$ and $\norm{g}_v$ are nonzero.\footnote{If $\norm{dg}_e$ were equal to $0$ then $g$ would be an essentially constant function, which would force  $g=0$ since $g\in \constone^\perp_v$.}
Let $M>0$ be such that $\norm{dg}_e, \norm{g}_v\geq M$.
Let $\varepsilon>0$ be arbitrary and choose $g'\in L^\infty(I)\cap \constone^\perp_v$ such that $\norm{g-g'}_v< \varepsilon M$.
We had shown in the proof of Lemma \ref{d-cont}, $\norm{df}_e\leq 2\norm{f}_v$ for all 
 $f\in L^2(I, \nu)$.  So  $$\norm{d(g-g')}_e=\norm{dg-dg'}_e< 2\varepsilon M.$$
Thus we have
\begin{equation*}
	|\norm{g}_v-\norm{g'}_v|< \varepsilon M \text{ and } |\norm{dg}_e - \norm{dg'}_e| < 2\varepsilon M
\end{equation*}
Hence we can approximate $\norm{dg}_e/\norm{g}_v$ arbitrarily well by the expressions of the form $\norm{dg'}_e/\norm{g'}_v$ by choosing a suitable $g'\in L^\infty(I)\cap \constone^\perp_v$.
We have proved
\begin{lemma}\label{denseapp}
	Let $W$ be a connected graphon.
	Then
\begin{equation}
	\label{rayleigh quotient approximation}
	\lambda_W = \inf_{g\in \constone^\perp_v:\ g\neq 0} \frac{\norm{dg}_e^2}{\norm{g}_v^2} =  \inf_{\substack{g\in \constone^\perp_v:\ g\neq 0,\\ g\in L^\infty(I)}} \frac{\norm{dg}_e^2}{\norm{g}_v^2}
\end{equation}
\end{lemma}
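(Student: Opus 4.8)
The plan is to prove the two displayed equalities in turn. The first, $\lambda_W = \inf_{g\in \constone^\perp_v,\ g\neq 0} \norm{dg}_e^2/\norm{g}_v^2$, is nothing new: it is exactly the Rayleigh-quotient description of the bottom of the spectrum of $\lap_W = d^*d$ that we already obtained, using that the $0$-eigenspace of $\lap_W$ is spanned by $\constone$ and that $\ab{f,\lap_W f}_v = \norm{df}_e^2$. So no extra work is needed there. For the second equality, the inequality $\inf$-over-$(\constone^\perp_v\cap L^\infty(I)) \geq \inf$-over-$\constone^\perp_v$ is immediate, since one is restricting the infimum to a smaller set; the content is the reverse inequality, i.e.\ that every nonzero $g\in \constone^\perp_v$ has its Rayleigh quotient approximated by elements of $L^\infty(I)\cap \constone^\perp_v$.

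First I would establish that $L^\infty(I)\cap \constone^\perp_v$ is dense in $\constone^\perp_v$. To do this, introduce the map $P\colon L^2(I,\nu)\to L^2(I,\nu)$, $P(f)=f-\mc I(f)$ with $\mc I(f)=\ab{f,\constone}_v$, and check by direct computation that $P$ is bounded, idempotent ($P^2=P$), and self-adjoint ($\ab{Pf,g}_v=\ab{f,Pg}_v$); hence $P$ is the orthogonal projection onto its image, which one identifies as $\constone^\perp_v$ (it kills $\constone$ and fixes $\constone^\perp_v$). Since $P$ maps $L^\infty(I)$ into itself and $L^\infty(I)$ is dense in $L^2(I,\nu)$, applying $P$ shows $P(L^\infty(I))\subseteq L^\infty(I)\cap\constone^\perp_v$ is dense in $P(L^2(I,\nu))=\constone^\perp_v$.

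Next, fix a nonzero $g\in\constone^\perp_v$. Then $\norm{g}_v>0$, and moreover $\norm{dg}_e>0$: if $\norm{dg}_e=0$ then $g$ is essentially constant by the connectedness argument together with Lemma \ref{ess-const-graphon}, forcing $g=0$ since $g\in\constone^\perp_v$. Choose $M>0$ below both $\norm{g}_v$ and $\norm{dg}_e$, pick $g'\in L^\infty(I)\cap\constone^\perp_v$ with $\norm{g-g'}_v$ small, and use $\norm{d(g-g')}_e\leq 2\norm{g-g'}_v$ from Lemma \ref{d-cont} to control $|\norm{g}_v-\norm{g'}_v|$ and $|\norm{dg}_e-\norm{dg'}_e|$; since all four quantities stay bounded away from $0$, the quotient $\norm{dg'}_e^2/\norm{g'}_v^2$ can be made arbitrarily close to $\norm{dg}_e^2/\norm{g}_v^2$. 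Taking infima over both sides yields the second equality.

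The main obstacle is the final step: passing from $L^2$-approximation of $g$ to approximation of the Rayleigh \emph{quotient} requires keeping the denominator (and the numerator) bounded away from zero, and the fact that $\norm{dg}_e>0$ for every nonzero $g\in\constone^\perp_v$ is precisely where connectedness of $W$ (via Lemma \ref{ess-const-graphon}) enters. Once that is in hand, continuity of $d$ (Lemma \ref{d-cont}) makes the remaining estimates routine.
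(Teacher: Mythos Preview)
Your proposal is correct and follows essentially the same route as the paper: define the projection $P(f)=f-\ab{f,\constone}_v$, verify it is the orthogonal projection onto $\constone^\perp_v$ and preserves $L^\infty(I)$ to get density, then use connectedness (via Lemma \ref{ess-const-graphon}) to ensure $\norm{dg}_e>0$ and the bound $\norm{df}_e\leq 2\norm{f}_v$ from Lemma \ref{d-cont} to approximate the Rayleigh quotient. The only cosmetic difference is that you make explicit the step that $L^\infty(I)$ is dense in $L^2(I,\nu)$ before applying $P$, which the paper leaves implicit.
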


Now we are ready to prove Theorem \ref{cheeger-graphon}.
Let $g:I\to \R$ be an arbitrary map in  $L^\infty(I)$ with $\norm{g}_v=1$ and $\ab{g, \constone}_v=0$.
To prove Theorem \ref{cheeger-graphon} it suffices to show that $\norm{dg}_e^2\geq \frac{1}{8} h^2_W$.
Let
\begin{equation}
	t_0=\sup\set{t\in \R:\ \vol_W(g^{-1}(-\infty, t))\leq \frac{1}{2}\vol_W(I)}
\end{equation}
The number $t_0$ exists since $g$ is $L^\infty$.
Define $f=g-t_0$.
Then both the sets $\set{f < 0}$ and $\set{f>0}$ have volumes at most half of $\vol_W(I)$.
Also
\begin{equation}
	\norm{f}_v^2=\norm{g-t_0}_v^2 = \norm{g}_v^2+ \norm{t_0}_v^2-2t_0\ab{g, \constone}_v = 1+\norm{t_0}_v^2\geq 1
\end{equation}
Clearly, $df=dg$.
Therefore
\begin{equation}
	\label{passing from g to f in graphons}
	\norm{dg}_e^2\geq \frac{\norm{df}_e^2}{\norm{f}_v^2}
\end{equation}
\begin{lemma}
	\label{passing to squares in graphons}
	\begin{equation}
		\norm{df}_e^2
		\geq\frac{1}{8\norm{f}_v^2} \lrb{\int_{E_f} |df^2_+|W + \int_{E_f} |df^2_-|W}^2
	\end{equation}
\end{lemma}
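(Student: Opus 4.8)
The plan is to follow the outline from Cheeger's original argument as described in the introduction: split $\norm{df}_e^2$ into a ``positive'' and ``negative'' part, apply Cauchy–Schwarz on each, and then recognize the resulting quantities via the co-area formula. First I would write $E_f = \set{(x,y) : f(y) > f(x)}$ and observe that
\begin{equation*}
	\norm{df}_e^2 = \int_{E_f} (f(y)-f(x))^2 W(x,y)\ dx\, dy,
\end{equation*}
since $(df)^2$ is symmetric in $x,y$ and the complement of $E_f$ in $I^2$ (up to a null set where $f(x)=f(y)$, which contributes nothing) is just $E_f$ with coordinates swapped, while integrating over $E_W$ versus all of $E$ makes no difference because the integrand carries the factor $W$. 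The point of restricting to $E_f$ is that on $E_f$ one has $f(y) > f(x)$, so $f_+(y) \ge f_+(x)$ and $f_-(y) \le f_-(x)$, and the two ``one-sided gradients'' $|df_+^2| = f_+^2(y) - f_+^2(x)$ and $|df_-^2| = f_-^2(x) - f_-^2(y)$ are both nonnegative there — exactly the setting of Theorem \ref{coarea-graphon}.

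Next I would prove the pointwise inequality, valid on $E_f$,
\begin{equation*}
	|df_+^2| + |df_-^2| \le 2\,|f(y) - f(x)|\,\big(|f_+(x)| + |f_+(y)| + |f_-(x)| + |f_-(y)|\big),
\end{equation*}
or more simply $|df_+^2| + |df_-^2| \le 2|f(y)-f(x)|\,(|f(x)| + |f(y)|)$: writing $a^2 - b^2 = (a-b)(a+b)$ and noting that $|f_\pm(y) - f_\pm(x)| \le |f(y) - f(x)|$ while $f_+(x) + f_+(y), f_-(x) + f_-(y) \le |f(x)| + |f(y)|$. Then integrating against $W$ over $E_f$ and applying the Cauchy–Schwarz inequality in $L^2(E_f, W\,dx\,dy)$ to the product $|f(y)-f(x)| \cdot (|f(x)|+|f(y)|)$ gives
\begin{equation*}
	\int_{E_f} \big(|df_+^2| + |df_-^2|\big) W \le 2 \lrp{\int_{E_f} (f(y)-f(x))^2 W}^{1/2} \lrp{\int_{E_f} (|f(x)|+|f(y)|)^2 W}^{1/2}.
\end{equation*}
The first factor is $\norm{df}_e$. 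For the second factor I would expand $(|f(x)|+|f(y)|)^2 \le 2 f(x)^2 + 2 f(y)^2$, and then $\int_{E_f}(f(x)^2 + f(y)^2)W \le \int_{I^2} f(x)^2 W = \norm{f}_v^2$ (using $d_W(x) = \int W(x,y)\,dy$ and symmetry), so the second factor is at most $\sqrt{4\norm{f}_v^2} = 2\norm{f}_v$. Combining: $\int_{E_f}(|df_+^2| + |df_-^2|)W \le 4\norm{df}_e\norm{f}_v$, i.e. $\norm{df}_e^2 \ge \frac{1}{16\norm{f}_v^2}\big(\int_{E_f}|df_+^2|W + \int_{E_f}|df_-^2|W\big)^2$.

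This gives a constant of $16$ rather than the claimed $8$, so the main obstacle is sharpening the argument to recover the factor $8$. The fix is to apply Cauchy–Schwarz separately to the $+$ and $-$ pieces rather than to their sum. Concretely, using $|df_+^2| \le |f_+(y)-f_+(x)|(f_+(x)+f_+(y)) \le |f(y)-f(x)|(|f(x)|+|f(y)|)$ on $E_f$, Cauchy–Schwarz gives $\int_{E_f}|df_+^2|W \le 2\norm{df}_e\,\big(\int_{E_f}(f_+(x)^2+f_+(y)^2)W\big)^{1/2}$, and similarly for $f_-$; then one adds the two inequalities and applies Cauchy–Schwarz (or the elementary $\sqrt{a}+\sqrt{b} \le \sqrt{2(a+b)}$) to the two square-root terms, whose squares sum to at most $\int_{I^2} f^2 W = \norm{f}_v^2$ since $f_+^2 + f_-^2 = f^2$. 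This yields $\int_{E_f}|df_+^2|W + \int_{E_f}|df_-^2|W \le 2\norm{df}_e \cdot \sqrt{2}\,\norm{f}_v = 2\sqrt{2}\,\norm{df}_e\norm{f}_v$, whence $\norm{df}_e^2 \ge \frac{1}{8\norm{f}_v^2}\big(\int_{E_f}|df_+^2|W + \int_{E_f}|df_-^2|W\big)^2$, exactly as claimed. The only subtle points are the measurability/finiteness bookkeeping (all integrands are nonnegative and dominated by $L^1$ quantities since $f \in L^2(I,\nu) \subseteq L^2$, and $g$ is bounded in the application) and making sure the restriction to $E_f$ is handled consistently in every step.
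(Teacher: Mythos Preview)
Your argument is correct and lands on the stated constant $8$. The route is a close variant of the paper's, with the recombination organized on the opposite side. Both proofs apply Cauchy--Schwarz to the factorization $|df_\pm^2| = |df_\pm|\,(f_\pm(x)+f_\pm(y))$ separately for the two signs. The paper then bounds the ``sum'' factor uniformly by $\int_{I^2}(|f(x)|+|f(y)|)^2W \le 4\norm{f}_v^2$ and recombines on the ``gradient'' side via the pointwise inequality $|df|^2 \ge |df_+|^2 + |df_-|^2$ followed by $a^2+b^2\ge \tfrac12(a+b)^2$. You instead bound the ``gradient'' factor uniformly by $\norm{df}_e$ (using $|df_\pm|\le |df|$) and recombine on the ``sum'' side via $f_+^2+f_-^2=f^2$ together with $\sqrt{a}+\sqrt{b}\le\sqrt{2(a+b)}$ and $\int_{E_f}(f(x)^2+f(y)^2)W\le \norm{f}_v^2$. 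Two small slips in your constants work in your favor: the pointwise bound $|df_+^2|+|df_-^2| \le 2|df|\,(|f(x)|+|f(y)|)$ in your first pass actually holds with constant $1$ (check the three sign cases), and the Cauchy--Schwarz step in your fix should produce a $\sqrt{2}$ rather than a $2$; tightening either gives the inequality with constant $4$ instead of $8$, so your accounting was merely conservative.
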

\begin{proof}
	Note that
	\begin{equation}
		\label{passing to edges oriented by f-graphon} 
		\norm{df}_e^2= \int_{E} |df|^2W = \int_{E_f} |df|^2W 
	\end{equation}
	where $E_f=\set{(x, y)\in I^2:\ f(y)> f(x)}$.
	Also
	\begin{equation}
		\label{split in two parts in graphons}
		\int_{E_f} |df|^2W \geq \int_{E_f} |df_+|^2W + \int_{E_f} |df_-|^2W
	\end{equation}
	This is because $|df|^2\geq |df_+|^2+ |df_-|^2$ is true pointwise in $E_f$.
	By Cauchy-Schwarz we have
	\begin{align}
		\begin{split}
			\lrb{\int_{E_f} |df_+|^2W}^{1/2} \lrb{\int_{I^2} (|f(x)| + |f(y)|)^2W(x, y)\ dxdy}^{1/2}
			&\geq \int_{E_f} |df^2_+|W\\
		\end{split}
	\end{align}
	Using Cauchy-Schwarz again, we can show that
	\begin{equation}
		4\norm{f}_v^2\geq \int_{I^2} (|f(x)| + |f(y)|)^2W(x,y)\ dxdy
	\end{equation}
	which gives
	\begin{align}
		\begin{split}
			2\norm{f}_v\lrb{\int_{E_f} |df_+|^2W}^{1/2} &\geq \int_{E_f} |df_+^2|W\\
			\ra \int_{E_f}|df_+|^2W &\geq \frac{1}{4\norm{f}_v^2} \lrp{\int_{E_f} |df_+^2|W}^2
		\end{split}
	\end{align}
	Similarly,
	\begin{equation}
		\int_{E_f} |df_-|^2W \geq \frac{1}{4\norm{f}_v^2} \lrp{\int_{E_f} |df_-^2|W}^2
	\end{equation}
	Using these in Equation \ref{split in two parts in graphons} gives
	\begin{align}
		\begin{split}
			\int_{E_f} |df|^2W &\geq \frac{1}{4\norm{f}_v^2}\lrb{\lrp{\int_{E_f} |df_+^2|W}^2 + \lrp{\int_{E_f} |df_-^2|W}^2}\\
		&\geq \frac{1}{8\norm{f}_v^2} \lrb{\int_{E_f} |df^2_+|W + \int_{E_f} |df^2_-|W}^2
		\end{split}
	\end{align}
	and we have proved the lemma.
\end{proof}

We now proceed to complete the proof of Theorem \ref{cheeger-graphon}.\\

\noindent {\bf Proof of Theorem \ref{cheeger-graphon}:}\\
The \hyperref[coarea formula for graphons]{Co-area Formula} Theorem \ref{coarea-graphon} gives
\begin{equation}
		\int_{E_f} |df^2_+|W = \int_0^\infty 2te_W(S_t^c, S_t)\ dt\quad \text{ and }\quad
		\int_{E_f} |df_-^2|W = \int_{0}^\infty 2te_W(S_{-t}^c, S_{-t})\ dt
\end{equation}
But
\begin{align}
	\begin{split}
		\int_0^\infty 2te_W(S_t^c, S_t) \ dt &\geq h_W\int_0^\infty 2t\vol(S_t)\ dt\\
		&=h_W\int_0^\infty 2t\lrb{\int_{I^2} \chi_{I\times S_t}(x, y) W(x, y)\ dx dy}dt\\
		&=h_W\int_{I^2}\lrb{\int_0^\infty 2t\chi_{I\times S_t}(x, y) \ dt} W(x, y)dxdy\\
		&=h_W\int_{I^2}\lrb{\int_0^\infty 2t\chi_{I\times S_t}(x, y)\ dt} W(x, y)dxdy\\
		&=h_W\int_{I^2}\lrb{\int_0^{f_+(y)} 2t \ dt} W(x, y)\ dxdy\\
		&=h_W\int_{I^2} f_+^2(y) W(x, y)\ dx dy
	\end{split}
\end{align}
Similarly
\begin{align}
	\begin{split}
		\int_0^\infty 2te_W(S_{-t}^c, S_{-t}) \ dt &\geq h_W\int_0^\infty 2t\vol(S_{-t}^c)\ dt\\
		&=h_W\int_0^\infty 2t\lrb{\int_{I^2} \chi_{I\times S_{-t}^c}(x, y) W(x, y)\ dx dy}dt\\
		&=h_W\int_{I^2}\lrb{\int_0^\infty 2t\chi_{I\times S_{-t}^c}(x, y)\ dt} W(x, y) dxdy\\
		&=h_W\int_{I^2}\lrb{\int_0^\infty 2t\chi_{I\times S_{-t}^c}(x, y)\ dt}W(x, y)dxdy\\
		&=h_W\int_{I^2}\lrb{\int_0^{f_-(y)} 2t \ dt} W(x, y)dxdy\\
		&=h_W\int_{I^2} f_-^2(y) W(x, y)\ dx dy
	\end{split}
\end{align}
Therefore
\begin{align}
	\begin{split}
		\int_{E_f} |df^2_+|W + \int_{E_f}|df^2_-|W &\geq h_W \lrb{\int_{I^2} f_+^2(y) W(x, y)\ dxdy+ \int_{I^2} f_-^2(y)W(x, y)\ dxdy}\\
		&= h_W \lrb{\int_{I^2} (f_+^2(y)+ f_-^2(y)) W(x, y)\ dxdy}\\
		&= h_W \lrb{\int_{I^2} f^2(y)W(x, y)\ dxdy}\\
		&=h_W \lrb{\int_If^2(y) d_W(y)\ dy} = h_W \norm{f}^2_v
	\end{split}
\end{align}
Combining this with Lemma \ref{passing to squares in graphons}, we have
\begin{equation}
	\norm{df}_e^2 \geq \frac{1}{8\norm{f}_v^2}  h^2_W\norm{f}_v^4
\end{equation}
and thus
\begin{equation}
	\frac{\norm{df}_e^2}{\norm{f}_v^2} \geq \frac{1}{8} h^2_W
\end{equation}
Lastly, using Equation \ref{passing from g to f in graphons} we have
\begin{equation}
	\norm{dg}_e^2\geq \frac{1}{8}h^2_W
\end{equation}
and we are done.\hfill $\blacksquare$


\section{Cheeger Constant for Graphings and the Cheeger-Buser Inequalities}\label{graphing} We now turn to graphings.
For the purposes of this section $G=(I, \mu, E)$ will denote a  graphing.
As discussed  in Section \ref{prel-graphing}, graphings are substantially different from graphons in terms of their structure.
In spite of this difference,
Lemma \ref{what eta really is} will allow us to furnish proofs that are, at least at a  formal level, extremely similar to the proofs in Section \ref{graphon} above. However, the actual intuition and idea behind the proofs will really go back to Theorem \ref{strand theorem}. In this section, we shall therefore try to convey to the reader both the formal similarity with the proofs in Section \ref{graphon} as well as  the actual structural idea  going back to Theorem \ref{strand theorem}.
\subsection{Cheeger Constant for Graphings}\label{sec:cheeger-graphing}
For two measurable subsets $A$ and $B$ of $I$, we define
\begin{equation}
	e_G(A, B) = \eta(A, B)= \int_A \deg_B(x)\ d \mu(x)
\end{equation}
For a measurable subset $A$ of $I$, the \define{volume} of $G$ over $A$ is defined as
\begin{equation}
	\vol(A) = \int_{A} \deg(x) \ d\mu(x) = e_G(A, I)
\end{equation}
A graphing is said to be \define{connected} if for all measurable subsets $A$ of $I$ with $0< \mu(A)< 1$ we have $e_G(A, A^c) \neq 0$.
Note that if $G$ is connected then $\deg>0$ a.e.

Given a graphing $G$, we define the \define{Cheeger constant} of $G$ as
\begin{equation}
	h_G= \inf_{A\subseteq I:\ 0< \mu(A)< 1}\frac{e_G(A, A^c)}{\min\set{\vol(A), \vol(A^c)}}
\end{equation}
A  symmetrized version of the above constant which we will be referred to as the \define{symmetric Cheeger constant} is defined as
\begin{equation}
	g_G = \inf_{A\subseteq I:\ 0< \mu(A)< 1} \frac{e_G(A, B)}{\vol(A)\vol(A^c)}
\end{equation}
Note that the above defined constants exist for connected graphings.

%
%
%

%
\subsection{Buser Inequality for Graphings}

We first observe that the multiplicity of the singular value $0$ of the Laplacian of a connected graphing $(I, \mu, E)$ is 1.
For $f\in L^2(I, \mu)$, we have $\lap f=0$ if and only if $df=0$.
As in the case of graphons it now suffices to show the following: $df=0$ if and only if $f$ is constant (up to a set of measure zero).
Of course,  $df=0$ for constant $f$.
Conversely, assume that $df=0$.
Then
\begin{equation}
	\int_{E^+} (df)^2\ d\eta =0 
\end{equation}
which implies that

\begin{equation}
	\int_{E^+} (f(y)-f(x))^2\ d\eta(x, y)=0.
\end{equation}
Since
\begin{equation}
	\int_{E^+} (f(y)-f(x))^2\ d\eta(x, y)=\int_{E^-} (f(y)-f(x))^2\ d\eta(x, y).
\end{equation}
it follows that
\begin{equation}
	\int_{I^2} (f(y)-f(x))^2\ d\eta(x, y)=0
\end{equation}
For each $t\in \R$, let $S_t=f^{-1}(t, \infty)$.
Therefore,
\begin{equation}
	\int_{S_t^c\times S_t} (f(y)-f(x))^2\ d\eta(x, y)=0.
\end{equation}
It follows  that $(f(y)-f(x))^2$ is $\eta$-a.e. $0$ on $S_t^c\times S_t$.
But $f(y)-f(x)\neq 0$ for all $(x, y)\in S_t\times S_t^c$.
So $\eta(S_t^c, S_t)=0$  for all $t$.
The connectedness of $G$ then implies that either $S_t$ or $S_t^c$ has $\mu$-measure $0$.
So our claim follows from the following lemma whose proof is an exact replica of Lemma \ref{ess-const-graphon} and we omit it.

\begin{lemma}
	Let $f:I\to \R$ be a measurable function such that for all $t\in \R$ we have either $f^{-1}(-\infty, t]$ or $f^{-1}(t, \infty)$ has $\mu$-measure $0$.
	Then $f$ is constant $\mu$-a.e.
\end{lemma}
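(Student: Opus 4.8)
The plan is to mirror the proof of Lemma \ref{ess-const-graphon} almost verbatim, replacing Lebesgue measure by $\mu$ throughout. The statement is precisely the $\mu$-measure analogue of that lemma, and the only structural facts used in the graphon case are (a) the unit interval can be written as a countable disjoint union $I=\bigsqcup_{n\in\Z} f^{-1}(n,n+1]$, and (b) measures are countably additive and monotone; both remain true with $\mu$ in place of $\mu_L$ since $\mu$ is a probability measure on $I$. So no genuinely new idea is needed.

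Concretely, I would set
\begin{equation*}
	t_0=\inf\set{t\in \R:\ f^{-1}(-\infty, t] \text{ has full }\mu\text{-measure}}.
\end{equation*}
First I would argue $t_0\neq -\infty$: since $I=\bigsqcup_{n\in\Z} f^{-1}(n,n+1]$ and $\mu(I)=1$, some $f^{-1}(n,n+1]$ has positive $\mu$-measure, hence $f^{-1}(-\infty,n]$ is not full measure, so $n\le t_0$. (One should also note $t_0<+\infty$ by a symmetric argument, or observe that for large $t$ the complement $f^{-1}(t,\infty)$ must eventually have $\mu$-measure $0$; this just ensures the infimum is over a nonempty set, which is what the hypothesis guarantees in one direction for each $t$.) Next, by definition of $t_0$ as an infimum, $f^{-1}(-\infty, t_0-1/n]$ fails to have full measure for every $n$; by the dichotomy hypothesis applied at $t=t_0-1/n$, it must then have $\mu$-measure $0$, and since $f^{-1}(-\infty,t_0)=\bigcup_n f^{-1}(-\infty,t_0-1/n]$ is a countable increasing union, $\mu(f^{-1}(-\infty,t_0))=0$. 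Symmetrically, for each $n$ the set $f^{-1}(t_0+1/n,\infty)$ must have $\mu$-measure $0$ (otherwise $f^{-1}(-\infty,t_0+1/n]$ is not full measure, contradicting that $t_0+1/n>t_0$ is not a valid infimum lower bound — i.e.\ $t_0+1/n$ lies in the set whose infimum is $t_0$), and taking the countable union gives $\mu(f^{-1}(t_0,\infty))=0$. Therefore $\mu(f^{-1}(\{t_0\}))=1$, i.e.\ $f=t_0$ $\mu$-a.e.

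There is essentially no obstacle here; the author has already flagged this by saying "the proof is an exact replica of Lemma \ref{ess-const-graphon} and we omit it." The only point requiring a line of care is the interchange between "the infimum is attained-ish" bookkeeping and the countable additivity of $\mu$ — specifically justifying $\mu(f^{-1}(-\infty,t_0))=0$ and $\mu(f^{-1}(t_0,\infty))=0$ from the one-sided measure-zero hypotheses via monotone limits of measures. Since $\mu$ is a genuine (countably additive, finite) Borel measure on $I$, continuity from below applies and these steps go through unchanged. Hence $f$ is constant $\mu$-a.e., completing the proof.
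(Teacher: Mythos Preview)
Your proposal is correct and follows exactly the route the paper intends: it is a line-by-line transcription of the proof of Lemma~\ref{ess-const-graphon} with $\mu$ in place of Lebesgue measure, which is precisely why the paper omits the argument. The only cosmetic point is that your justification of $\mu(f^{-1}(t_0+1/n,\infty))=0$ is phrased a bit awkwardly; it is cleanest to note that the set $\{t:\mu(f^{-1}(-\infty,t])=1\}$ is upward-closed, so $t_0+1/n$ belongs to it for every $n$.
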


The eigenfunctions corresponding to $0$ are thus  the essentially constant functions:
 the $0-$eigenspace of $\lap$  is generated by $\constone$.
Define
\begin{equation}
	\lambda_G = \inf_{g\in \constone^\perp_v:\ g\neq 0} \frac{\ab{g, \lap g}_v}{\ab{g, g}_v} = \inf_{g\in \constone^\perp_v:\ g\neq 0} \frac{\norm{dg}^2_e}{\norm{g}^2_v}
\end{equation}
(Here, $\constone^\perp_v$ denotes the orthogonal complement of $\constone$ with respect to the inner product $\ab{\cdot, \cdot}_v$).

\begin{theorem}[Buser Inequality]
	Let $G=(I, \mu, E)$ be a connected graphing.
	Then
	\begin{equation}
		\lambda_G\leq 2h_G \text{ and } \lambda_G\leq g_G
	\end{equation}
	\label{buser-graphing}
\end{theorem}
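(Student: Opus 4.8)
The plan is to mirror the proof of the graphon case (Theorem \ref{buser-graphon}) as closely as possible, which is feasible precisely because the formalism of $d$, $d^\ast$, the inner products $\ab{\cdot,\cdot}_v$ on $L^2(I,\mu)$ (weighted by $\deg$) and $\ab{\cdot,\cdot}_e$ on $L^2(E^+,\eta)$, and the Laplacian $\lap_G=d^\ast d$ will have been set up for graphings in complete analogy with Section \ref{graphon}. Given a measurable partition $I=A\sqcup B$ with $0<\mu(A)<1$ (so $A=B^c$), one defines the test function
\begin{equation*}
	f(x)=\twopartdef{\ds\frac{1}{\vol(A)}}{x\in A}{-\ds\frac{1}{\vol(B)}}{x\in B}
\end{equation*}
and checks that $\ab{f,\constone}_v=\frac{1}{\vol(A)}\vol(A)-\frac{1}{\vol(B)}\vol(B)=0$, so $f\in\constone^\perp_v$ and the Rayleigh quotient characterization of $\lambda_G$ applies.

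First I would compute $\norm{f}_v^2$: since $f$ is constant on $A$ and on $B$, $\norm{f}_v^2=\frac{1}{\vol(A)^2}\vol(A)+\frac{1}{\vol(B)^2}\vol(B)=\frac{1}{\vol(A)}+\frac{1}{\vol(B)}$. Next I would compute $\norm{df}_e^2$. Here the key identity is Lemma \ref{what eta really is} (and its consequence, Equation \ref{what is eta in terms of mu}), which lets one pass from integration over $E^+$ against $\eta$ to a symmetric expression over all of $E$: namely $\norm{df}_e^2=\int_{E^+}|df|^2\,d\eta=\frac12\int_{E}(f(y)-f(x))^2\,d\eta(x,y)$, using the symmetry of $E$ and of $\eta$. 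Since $df(x,y)=f(y)-f(x)$ vanishes whenever $x,y$ lie in the same part, the only contributions come from $A\times B$ and $B\times A$, on each of which $(f(y)-f(x))^2=\lrp{\frac{1}{\vol(A)}+\frac{1}{\vol(B)}}^2$. Thus
\begin{equation*}
	\norm{df}_e^2=\frac12\lrp{\frac{1}{\vol(A)}+\frac{1}{\vol(B)}}^2\lrp{\eta(A\times B)+\eta(B\times A)}=\lrp{\frac{1}{\vol(A)}+\frac{1}{\vol(B)}}^2 e_G(A,B),
\end{equation*}
where I used $\eta(A\times B)=\eta(B\times A)=e_G(A,B)$.

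Dividing, $\dfrac{\norm{df}_e^2}{\norm{f}_v^2}=\lrp{\dfrac{1}{\vol(A)}+\dfrac{1}{\vol(B)}}e_G(A,B)$. From here two estimates finish the job. Bounding $\frac{1}{\vol(A)}+\frac{1}{\vol(B)}\leq\frac{2}{\min\set{\vol(A),\vol(B)}}$ gives $\lambda_G\leq\dfrac{\norm{df}_e^2}{\norm{f}_v^2}\leq 2\,\dfrac{e_G(A,A^c)}{\min\set{\vol(A),\vol(A^c)}}$, and taking the infimum over all admissible $A$ yields $\lambda_G\leq 2h_G$. Alternatively, $\frac{1}{\vol(A)}+\frac{1}{\vol(B)}=\frac{\vol(A)+\vol(B)}{\vol(A)\vol(B)}\leq\frac{1}{\vol(A)\vol(B)}$ because $\vol(A)+\vol(B)=\vol(I)=\int_I\deg\,d\mu\leq 1$ (the degree bound is $D$, but more to the point $\mu$ is a probability measure and $\vol(A)+\vol(B)=\int_I\deg_I\,d\mu$; one should double-check here whether $\vol(I)\le 1$ holds or whether one needs $e_G(A,A^c)\le \vol(A)\vol(A^c)\cdot(\text{const})$ — this normalization subtlety is the one place where graphings differ from graphons and is the main thing to get right). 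Granting $\vol(I)\le 1$, one gets $\lambda_G\leq\dfrac{e_G(A,A^c)}{\vol(A)\vol(A^c)}$, and taking the infimum gives $\lambda_G\leq g_G$. I expect the only genuine obstacle to be this last normalization point; everything else is a formal transcription of the graphon argument with Lemma \ref{what eta really is} doing the work that Fubini did there.
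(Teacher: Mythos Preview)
Your proposal is correct and is precisely the paper's ``Alternate Proof'' (formally replace $W(x,y)\,dx\,dy$ by $d\eta(x,y)$ in the graphon argument); the paper's primary proof differs only cosmetically, expanding the $\eta$-integrals via the strand decomposition of Theorem~\ref{strand theorem} as $\sum_i\int_I(f(x)-f(\vp_i(x)))^2\,d\mu(x)$ and then arriving at the identical line $\lambda_G\le\bigl(\tfrac{1}{\vol(A)}+\tfrac{1}{\vol(B)}\bigr)e_G(A,B)$. The normalization concern you flag for the $g_G$ bound is genuine---for a graphing one only has $\vol(I)=\int_I\deg\,d\mu\le D$ in general, not $\le 1$---and the paper makes the same step without comment, so you are in no worse position than the original on that point.
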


The proof below exploits the fact that one can decompose a graphing into finitely many matchings (Theorem \ref{strand theorem}).  An alternate proof can also be given following that of 
Theorem \ref{buser-graphon} replacing $W(x,y)dxdy$ formally with $d\eta(x, y)$.
\begin{proof}
	Let $\vp_i:C_i\to C_i$, $1\leq i\leq k$, be $\mu$-measure preserving involutions such that
	\begin{equation}
		E= \bigsqcup_{i=1}^k \set{(x, \vp_i(x)):\ x\in C_i}
	\end{equation}
	Extend each $\vp_i$ to a map $\vp_i:I\to I$ by declaring $\vp_i(x)=x$ for all $x\notin C_i$.
	Let $A\sqcup B$ form a measurable partition of $I$ with $0<\mu(A)< 1$.
	Define $f:I\to \R$ as
	\begin{equation}
		f(x) =
		\twopartdef{ \ds\frac{1}{\vol(A)}}{x\in A}{-\ds\frac{1}{\vol(B)}}{x\in B}
	\end{equation}
	Then $f\in \constone^\perp_v$.
	Now
	\begin{align*}
		\begin{split}
			\lambda_G &\leq \frac{
									\norm{df}^2_e
								}{
									\norm{f}^2_v
								} =  \frac{
						\int_{E^+} (f(x)-f(y))^2 \ d\eta(x, y)
					}{
						\int_0^1 f(x)^2 \deg(x)\ d\mu(x)
					}
			=  \frac{
						\int_{I^2} (f(x)-f(y))^2 \ d\eta(x, y)
					}{
						2\int_0^1 f(x)^2 \deg(x)\ d\mu(x)
					}\\ 
			&= \frac{
						\sum_{i=1}^k \int_{I} (f(x)-f(\vp_i(x)))^2\ d\mu(x)
					}{
						2\int_0^1 f(x)^2\deg(x)\ d\mu(x)
					}\\ 
			&= \frac{
						\sum_{i=1}^k\lrb{\int_A (f(x)-f(\vp_i(x)))^2\ d\mu(x) + \int_B (f(x)-f(\vp_i(x)))^2\ d\mu(x)}
					}
					{
						2\int_0^1 f(x)^2\deg(x)\ d\mu(x)
					}\\ 
			&= \frac{
						\sum_{i=1}^k \lrb{\int_A(f(x) - f(\vp_i(x)))^2\chi_B(\vp_i(x))\ d\mu(x) - \int_B (f(x)-f(\vp_i(x)))^2\chi_A(\vp_i(x))\ d\mu(x)}
					}
					{
							2\int_0^1 f(x)^2 \deg(x)\ d\mu(x)
					}\\ 
			&= \frac{
						\sum_{i=1}^k \lrb{\int_{A} \lrp{ \frac{1}{\vol(A)} + \frac{1}{\vol(B)}}^2\chi_B(\vp_i(x))\ d\mu(x) + \int_{B} \lrp{ \frac{1}{\vol(B)} + \frac{1}{\vol(A)}}^2\chi_A(\vp_i(B)) \ d\mu(x)}
					}
					{
						2\int_0^1 f(x)^2 \deg(x)\ d\mu(x)
					}\\ 
			&= \frac{
					\lrp{ \frac{1}{\vol(A)} + \frac{1}{\vol(B)}}^2 \sum_{i=1}^k\lrb{ \int_A \chi_B(\vp_i(x))\ d\mu(x) + \int_B\chi_A(\vp_i(x))\ d\mu(x)}
					}
					{
						2\lrb{\int_A f(x)^2 \deg(x)\ d\mu(x) + \int_B f(x)^2\deg(x)\ d\mu(x)}
					}\\ 
			&= \frac{
						\lrp{ \frac{1}{\vol(A)} + \frac{1}{\vol(B)}}^2 \lrb{ \int_A \deg_B(x)\ d\mu(x) + \int_B\deg_A(x)\ d\mu(x)}
					}
					{
						2\lrb{\frac{1}{\vol(A)^2}\vol(A) + \frac{1}{\vol(B)^2}\vol(B)}
					}\\ 
			&= \lrp{ \frac{1}{\vol(A)} + \frac{1}{\vol(B)}}e_G(A, B)\\ 
			&\leq  2 \frac{e_G(A, B)}{ \min\set{\vol(A), \vol(B)}}, \frac{e_G(A, B)}{\vol(A)\vol(B)}
		\end{split}
	\end{align*}
	Since $B=A^c$, and since the above holds for all choices of $A$ with $0< \mu(A)< 1$, we have $\lambda_G\leq 2h_G$ and $\lambda_G\leq g_G$.
\end{proof}
\noindent {\bf Alternate Proof:}
	In the proof of 
	Theorem \ref{buser-graphon} replace  $W(x,y)dxdy$ formally with $d\eta(x, y)$.
\hfill $\blacksquare$

%
%
%

\subsection{Co-area Formula for Graphings}

Let $G=(I, E, \mu)$ be a graphing and $f:I\to \R$ be any $L^\infty$-map.
Let $E_f$ be defined as
\begin{equation}
	E_f=\set{(x, y)\in E:\ f(y)> f(x)}
\end{equation}
The set $E_f$ will be referred to as the $f$-\define{oriented edges}  of $G$.
Let $S_t$ denote the set $f^{-1}(t, \infty)$.
Then
\begin{equation}
	\label{useless coarea formula for graphings}
	\int_{E_f} df d\eta = \int_{-\infty}^\infty e_G(S_t^c, S_t)\ dt
\end{equation}
Let us see the proof in the special case when $E$ is given by a single measure preserving involution $\vp:A\to A$ where $A$ is a measurable subset of $I$.
Define $R=\set{x\in A:\ (x, \vp(x))\in E_f}$.
Then the RHS of the above equation is
\begin{align}
	\begin{split}
		\int_{-\infty}^\infty e_G(S_t^c, S_t)\ dt &= \int_{-\infty}^\infty \int_{S_t^c} \deg_{S_t}(x)\ d\mu(x)\ dt
		= \int_{-\infty}^\infty \int_0^1 \chi_{S_t^c}(x) \deg_{S_t}(x)\ d\mu(x)\ dt\\
		&=\int_0^1 \int_{-\infty}^\infty \chi_{S_t^c}(x) \deg_{S_t}(x) \ dt\ d\mu(x)
		= \int_0^1 \int_{f(x)}^\infty \deg_{S_t}(x) \ dt\ d\mu(x)\\
		&= \int_R \int_{f(x)}^\infty \deg_{S_t}(x)\ dt\ d\mu(x)
		= \int_R (f\circ \vp(x)-f(x)) \ d\mu(x)
	\end{split}
\end{align}
On the other hand the LHS of Equation \ref{useless coarea formula for graphings} is
\begin{align}
	\begin{split}
		\int_{E_f} df\ d\eta = \int_{I^2} df\chi_{E_f}\ d\eta &= \int_I \sum_y df(x, y) \chi_{E_f}(x, y) \ d\mu(x)\\
		&= \int_R df(x, \vp(x))\ d\mu(x)\\
		&= \int_R (f\circ \vp(x)-f(x))\ d\mu(x)
	\end{split}
\end{align}
and therefore Equation \ref{useless coarea formula for graphings} holds.
Just as in the case of graphons, we need a slightly different lemma

\begin{theorem}\label{coarea-graphing}
	Let $G=(I, E, \mu)$ be a graphing and $f:I\to \R$ be an arbitrary $L^2$-map.
Define $f_+:I\to \R$ and $f_-:I\to \R$ as the map $f_+=\max\set{f, 0}$ and $f_-=-\min\set{f, 0}$.
Let $S_t=f^{-1}(t, \infty)$.
Then
\begin{align}
	\label{coarea formula for graphings}
	\begin{split}
		\int_{E_f} |df^2_+|\ d\eta &= \int_0^\infty e_G(S_{\sqrt t}^c, S_{\sqrt t})\ dt = \int_0^\infty 2te_G(S_t^c, S_t)\ dt,\quad \text{and}\\
		\int_{E_f} |df_-^2|\ d\eta &= \int_0^\infty e_G(S_{-\sqrt t}^c, S_{-\sqrt t}) = \int_0^\infty 2te_G(S_{-t}^c, S_{-t})\ dt
	\end{split}
\end{align}
\end{theorem}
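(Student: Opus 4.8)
The plan is to mimic the proof of the co-area formula for graphons (Theorem~\ref{coarea-graphon}) essentially verbatim, with Equation~\ref{what is eta in terms of mu} (equivalently Lemma~\ref{what eta really is}) playing the role that ``integrating $W(x,y)\,dx\,dy$'' played there. I would prove the first identity in Equation~\ref{coarea formula for graphings}; the second is obtained by the symmetric argument with $f_-$, $S_{-t}$ and $S_{-\sqrt t}$ replacing $f_+$, $S_t$ and $S_{\sqrt t}$ (here one uses that on $E_f$ one has $f_-(y)\le f_-(x)$, so $|df_-^2|=f_-^2(x)-f_-^2(y)$ there). The equality $\int_0^\infty e_G(S_{\sqrt t}^c,S_{\sqrt t})\,dt=\int_0^\infty 2t\,e_G(S_t^c,S_t)\,dt$ is just the substitution $t\mapsto t^2$, exactly as in the graphon case, so the real content is the identity $\int_0^\infty 2t\,e_G(S_t^c,S_t)\,dt=\int_{E_f}|df_+^2|\,d\eta$.

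First I would rewrite $e_G(S_t^c,S_t)=\eta(S_t^c\times S_t)$ using Equation~\ref{what is eta in terms of mu} with $\psi=\chi_{S_t^c\times S_t}$, getting
\[
e_G(S_t^c,S_t)=\int_I\sum_y \chi_{S_t^c\times S_t}(x,y)\,\chi_E(x,y)\,d\mu(x).
\]
Multiplying by $2t$ and integrating over $t\in(0,\infty)$, I would then interchange the order of $\int_0^\infty dt$, $\int_I d\mu$ and $\sum_y$. Since every integrand is non-negative, Tonelli's theorem justifies moving $\int_I d\mu$ outward, and the sum over $y$ is finite by the bounded-degree hypothesis (indeed, by Theorem~\ref{strand theorem} it has at most $k$ nonzero terms), so the exchange is legitimate and yields
\[
\int_0^\infty 2t\,e_G(S_t^c,S_t)\,dt=\int_I\sum_y \chi_E(x,y)\lrp{\int_0^\infty 2t\,\chi_{S_t^c\times S_t}(x,y)\,dt}\,d\mu(x).
\]

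Next I would evaluate the inner integral pointwise. One has $\chi_{S_t^c\times S_t}(x,y)=1$ exactly when $f(x)\le t< f(y)$, so the inner integral vanishes unless $(x,y)\in E_f$, and on $E_f$ the set of $t\ge 0$ with $f(x)\le t< f(y)$ is precisely $[f_+(x),f_+(y))$; hence $\int_0^\infty 2t\,\chi_{S_t^c\times S_t}(x,y)\,dt=\int_{f_+(x)}^{f_+(y)}2t\,dt=f_+^2(y)-f_+^2(x)=|df_+^2|(x,y)$ on $E_f$. Substituting this back and applying Equation~\ref{what is eta in terms of mu} once more, now with $\psi=|df_+^2|\,\chi_{E_f}$, gives $\int_0^\infty 2t\,e_G(S_t^c,S_t)\,dt=\int_{E_f}|df_+^2|\,d\eta$, which is the assertion.

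I expect the one genuinely delicate point to be the triple interchange of $\int_0^\infty dt$, $\int_I d\mu$ and $\sum_y$: this is the exact analogue of the Fubini step in the graphon proof, and it is where both Tonelli's theorem and the bounded-degree assumption (finiteness of the $y$-sum, via Theorem~\ref{strand theorem}) get used — everything else is bookkeeping with indicator functions identical to the graphon argument. A second, more structural route, closer in spirit to the single-strand computation carried out above for Equation~\ref{useless coarea formula for graphings}, would be to establish the formula first when $E=\set{(x,\vp(x)):x\in A}$ is a single strand — where $\int_{E_f}|df_+^2|\,d\eta$ is literally $\int_R(f_+^2(\vp(x))-f_+^2(x))\,d\mu(x)$ with $R=\set{x\in A:(x,\vp(x))\in E_f}$ — and then sum over the finitely many strands supplied by Theorem~\ref{strand theorem}, using additivity of $\eta$.
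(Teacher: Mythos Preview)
Your proof is correct. Interestingly, the paper takes as its \emph{primary} argument precisely what you describe as your ``second, more structural route'': it first proves the formula for a single strand $E=\set{(x,\vp(x)):x\in A}$ by computing both sides to be $\int_R(f_+^2(\vp(x))-f_+^2(x))\,d\mu(x)$, and then sums over the finitely many strands furnished by Theorem~\ref{strand theorem}. Your main approach --- applying Equation~\ref{what is eta in terms of mu} twice and doing a single global Tonelli interchange --- is a genuinely different organization: it mirrors the graphon proof (Theorem~\ref{coarea-graphon}) more faithfully and hides the strand decomposition inside the justification of the $\sum_y$ interchange rather than using it as the backbone of the argument. Your route is arguably cleaner and better highlights the formal parallel the paper is trying to emphasize between graphons and graphings; the paper's route makes the dependence on Theorem~\ref{strand theorem} more visible and connects more directly to the single-strand computation that precedes the theorem.
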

\begin{proof}
We prove only the first one.
Further, as in the proof of Lemma \ref{what eta really is} (see also Equation \ref{what is eta in terms of mu}) we first assume  that the edge set $E$ is determined by a single $\mu$-measure preserving involutions $\vp:A\to A$ defined on a measurable subset $A$ of $I$.
Define $E_f=\set{(x, y)\in E:\ f(y)> f(x)}$.
Let $R=\set{x\in I:\ (x, \vp(x))\in E_f}$.
We have by change of variables that
\begin{equation}
	\int_0^\infty e_G(S_{\sqrt t}^c, S_{\sqrt t})\ dt = \int_0^\infty 2te_G(S_t^c, S_t)\ dt
\end{equation}
Now
\begin{align}
	\begin{split}
		\int_0^\infty 2te_G(S_t^c, S_t)\ dt &= \int_0^\infty 2t\eta(S_t^c\times S_t)\ dt
		= \int_0^\infty 2t \lrb{\int_0^1\chi_{S_t^c}(x)\deg_{S_t}(x)\ d\mu(x)} dt\\
		&=\int_0^\infty \lrb{\int_0^1 2t\chi_{S_t^c}(x)\deg_{S_t}(x)\ d\mu(x)} dt\\
		&=\int_0^1 \lrb{\int_0^\infty 2t\chi_{S_t^c}(x)\deg_{S_t}(x)\ dt} d\mu(x)\\
		&=\int_0^1 \lrb{\int_{f_+(x)}^\infty 2t\deg_{S_t}(x)\ dt} d\mu(x)\\
		&=\int_R \lrb{\int_{f_+(x)}^\infty 2t\deg_{S_t}(x)\ dt} d\mu(x)\\
		&=\int_R \lrb{\int_{f_+(x)}^{f_+(\vp(x))} 2t\ dt} d\mu(x)\\
		&=\int_R (f_+^2(\vp(x)) - f_+^2(x)) d\mu(x)
	\end{split}
\end{align}
On the other hand
\begin{align}
	\begin{split}
		\int_{E_f}|df_+^2|\ d\eta &= \int_{I^2}|df_+^2(x, y)|\chi_{E_f}(x, y)\ d\eta(x, y)\\
		&= \int_I \sum_y |df_+^2(x, y)|\chi_{E_f}(x, y)\ d\mu(x)\\
		&= \int_R |df_+^2(x, \vp(x))|\ d\mu(x)\\
		&= \int_R (f_+^2(\vp(x)) - f_+^2(x))\ d\mu(x),
	\end{split}
\end{align}
completing the proof for the special case of a single strand.

We now deal with the general case where there may be multiple strands.
Let $\vp_i:A_i\to A_i$, $1\leq i\leq k$, be $\mu$-measure preserving involutions such that $E=\bigsqcup_{i=1}^k \set{(x, \vp_i(x)):\ x\in A_i}$.
Let $G_i$ be the graphing corresponding to $\vp_i$.
So $G=\bigsqcup_{i=1}^k G_i$.
Then
\begin{align}
	\begin{split}
		\int_0^\infty 2te_G(S_t^c, S_t)\ dt &= \sum_{i=1}^k \int_0^\infty 2te_{G_i}(S_t^c, S_t)\ dt
	\end{split}
\end{align}
and
\begin{align}
	\begin{split}
		\int_{E_f} |df_+^2|\ d\eta &= \sum_{i=1}^k \int_{E_f^i} |df_+^2|\ d\eta_i
	\end{split}
\end{align}
where $E_f^i$ are the $f$-oriented edges of $G_i$ and $\eta_i$ is the edge measure of $G_i$.
Thus the general case follows from the special case.
\end{proof}

\subsection{Cheeger Inequality for Graphings}

In this subsection we will prove the following Cheeger inequality for graphings.

\begin{theorem}
	\label{cheeger-graphing}
	Let $G$ be a connected graphing.
	Then
	\begin{equation}
		\lambda_G\geq \frac{h^2_G}{8}
	\end{equation}
\end{theorem}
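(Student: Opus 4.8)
The plan is to transcribe the proof of Theorem~\ref{cheeger-graphon} almost verbatim, replacing the measure $W(x,y)\,dx\,dy$ on $I^2$ by the edge measure $d\eta(x,y)$ and invoking Lemma~\ref{what eta really is} (equivalently Equation~\ref{what is eta in terms of mu}) to legitimise the Fubini-type interchanges, that is, to pass freely between integrals against $\eta$ and sums over the strands $\vp_i\colon A_i\to A_i$ provided by Theorem~\ref{strand theorem}. All the analytic ingredients already have graphing versions: boundedness of $d$ (the analog of Lemma~\ref{d-cont}, which follows from $\norm{df}_e^2=\tfrac12\int_{I^2}(f(y)-f(x))^2\,d\eta\le 2\norm{f}_v^2$ by Cauchy--Schwarz against $\eta$ together with $\int_{I^2}f(y)^2\,d\eta=\norm{f}_v^2$), and the co-area formula Theorem~\ref{coarea-graphing}.

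First I would record the graphing analog of Lemma~\ref{denseapp}: since $d$ is bounded, the orthogonal projection onto $\constone^\perp_v$ preserves $L^\infty(I)$, so $\lambda_G=\inf\{\norm{dg}_e^2/\norm{g}_v^2:\ g\in\constone^\perp_v\cap L^\infty(I),\ g\ne 0\}$. Hence it suffices to fix $g\in L^\infty(I)\cap\constone^\perp_v$ with $\norm{g}_v=1$ and show $\norm{dg}_e^2\ge h_G^2/8$. Exactly as in the graphon case, set $t_0=\sup\{t\in\R:\ \vol(g^{-1}(-\infty,t))\le\tfrac12\vol(I)\}$, which exists because $g\in L^\infty$, and put $f=g-t_0$; then $df=dg$, $\norm{f}_v^2=1+t_0^2\norm{\constone}_v^2\ge 1$, and both $\{f<0\}$ and $\{f>0\}$ have volume at most $\tfrac12\vol(I)$.

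Next I would prove the graphing version of Lemma~\ref{passing to squares in graphons}: from the pointwise bound $|df|^2\ge|df_+|^2+|df_-|^2$ on $E_f$ and two applications of Cauchy--Schwarz against $\eta$ (the first comparing $|df_\pm|^2$ with $|df_\pm^2|$, the second giving $\int_{I^2}(|f(x)|+|f(y)|)^2\,d\eta\le 4\norm{f}_v^2$) one gets
\[
\norm{df}_e^2\ \ge\ \frac{1}{8\norm{f}_v^2}\lrb{\int_{E_f}|df_+^2|\,d\eta+\int_{E_f}|df_-^2|\,d\eta}^2.
\]
Then, applying Theorem~\ref{coarea-graphing}, $\int_{E_f}|df_+^2|\,d\eta=\int_0^\infty 2t\,e_G(S_t^c,S_t)\,dt$ and $\int_{E_f}|df_-^2|\,d\eta=\int_0^\infty 2t\,e_G(S_{-t}^c,S_{-t})\,dt$. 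For $t\ge 0$ we have $S_t\subseteq\{f>0\}$, so $\vol(S_t)\le\tfrac12\vol(I)$ and $e_G(S_t^c,S_t)\ge h_G\vol(S_t)$; dually $S_{-t}^c\subseteq\{f<0\}$ gives $e_G(S_{-t}^c,S_{-t})\ge h_G\vol(S_{-t}^c)$. Substituting and interchanging integrals (again via Lemma~\ref{what eta really is}, and using symmetry of $\eta$) turns these into $\int_{E_f}|df_+^2|\,d\eta\ge h_G\int_{I^2}f_+^2(y)\,d\eta$ and $\int_{E_f}|df_-^2|\,d\eta\ge h_G\int_{I^2}f_-^2(y)\,d\eta$; adding and using $\int_{I^2}(f_+^2(y)+f_-^2(y))\,d\eta=\int_{I^2}f^2(y)\,d\eta=\int_I f^2(y)\deg(y)\,d\mu(y)=\norm{f}_v^2$ yields $\int_{E_f}|df_+^2|\,d\eta+\int_{E_f}|df_-^2|\,d\eta\ge h_G\norm{f}_v^2$. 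Combining with the displayed inequality, $\norm{df}_e^2\ge\tfrac{1}{8}h_G^2\norm{f}_v^2\ge\tfrac{1}{8}h_G^2$, and since $\norm{dg}_e=\norm{df}_e$ and $\norm{g}_v=1$ the reduction of the previous paragraph gives $\lambda_G\ge h_G^2/8$.

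I do not expect a serious obstacle here: the real content is the co-area formula, which is Theorem~\ref{coarea-graphing}. The only points demanding care are bookkeeping ones --- verifying that every exchange of $\int_{I^2}(\cdot)\,d\eta$ with $\int_0^\infty(\cdot)\,dt$ and with the finite sum over strands is justified by nonnegativity together with Lemma~\ref{what eta really is} and Theorem~\ref{strand theorem}, and checking that the two Cauchy--Schwarz steps behind the graphing analog of Lemma~\ref{passing to squares in graphons} go through with $\eta$ in place of the measure $W\,dx\,dy$. One should also confirm the identity $\int_{I^2}f^2(y)\,d\eta(x,y)=\norm{f}_v^2$, which is immediate from the strand decomposition since each $\vp_i$ is $\mu$-measure preserving and $\deg=\sum_i\chi_{A_i}$.
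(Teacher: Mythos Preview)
Your proposal is correct and follows essentially the same route as the paper's proof: the paper likewise reduces to $g\in L^\infty(I)\cap\constone^\perp_v$ via the graphing analog of Lemma~\ref{denseapp}, shifts by the same $t_0$, proves the graphing version of Lemma~\ref{passing to squares in graphons} (with the identity $\int_{I^2}f(x)^2\,d\eta=\norm{f}_v^2$ being the one extra point noted), and then combines the co-area formula Theorem~\ref{coarea-graphing} with the bound $e_G(S_t^c,S_t)\ge h_G\vol(S_t)$ exactly as you describe. Your emphasis on legitimising the Fubini-type interchanges via Lemma~\ref{what eta really is} and the strand decomposition is precisely the structural point the paper highlights as distinguishing the graphing case from the graphon case.
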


The proof of Lemma \ref{denseapp} goes through mutatis mutandis to give:
\begin{lemma}\label{denseapp-graphing}
	Let $G$ be a connected graphing.
	Then
\begin{equation}
	\label{rayleigh quotient approximation for graphing}
	\lambda_G = \inf_{g\in \constone^\perp_v:\ g\neq 0} \frac{\norm{dg}_e^2}{\norm{g}_v^2} =  \inf_{\substack{g\in \constone^\perp_v:\ g\neq 0,\\ g\in L^\infty(I, \mu)}} \frac{\norm{dg}_e^2}{\norm{g}_v^2}
\end{equation}
\end{lemma}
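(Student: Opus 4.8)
The plan is to transcribe the proof of Lemma \ref{denseapp} into the graphing setting, replacing the Lebesgue measure by $\mu$, the density $d_W$ by $\deg$, and $W(x,y)\,dx\,dy$ by $d\eta$. The single analytic ingredient that is not pure Hilbert-space formalism is the boundedness of $d$: one has $\norm{df}_e \le 2\norm{f}_v$ for every $f \in L^2(I,\mu)$, proved by repeating the computation of Lemma \ref{d-cont} with the double integrals against $W$ replaced by $\eta$-integrals, which are rewritten using \eqref{what is eta in terms of mu} (equivalently, one decomposes $E$ into finitely many matchings via Theorem \ref{strand theorem} and runs the one-variable Cauchy--Schwarz estimate strand by strand). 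I therefore take this bound as known.

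Granting it, the proof splits into two halves. \emph{Density.} Put $\mc I(f) = \ab{f,\constone}_v$ and let $P\colon L^2(I,\mu)\to L^2(I,\mu)$ be $P(f) = f - \mc I(f)\constone/\mc I(\constone)$, noting $\mc I(\constone) = \norm{\constone}_v^2 > 0$ because $\deg > 0$ $\mu$-a.e.\ for a connected graphing. Exactly as in the proof of Lemma \ref{denseapp} one checks that $P$ is a bounded linear operator with $P^2 = P$ and $\ab{Pf,g}_v = \ab{f,Pg}_v$, so $P$ is the orthogonal projection onto its image; since $\im P \subseteq \constone^\perp_v$ and $P$ restricts to the identity on $\constone^\perp_v$, its image is exactly $\constone^\perp_v$. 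Moreover $P$ visibly carries $L^\infty(I,\mu)$ into itself. Applying $P$ to a sequence of bounded functions converging in $L^2$ to a prescribed $g \in \constone^\perp_v$ — for instance the truncations $\max\set{-n,\min\set{g,n}}$ — then produces functions in $L^\infty(I,\mu)\cap\constone^\perp_v$ converging to $g$, so this set is dense in $\constone^\perp_v$.

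\emph{Comparison of infima.} Let $g \in \constone^\perp_v$ be nonzero; then $\norm{g}_v > 0$, and $\norm{dg}_e > 0$ as well, since $\norm{dg}_e = 0$ would force $g$ to be essentially constant and hence zero (it is $v$-orthogonal to $\constone$). Fix $M>0$ with $\norm{g}_v,\norm{dg}_e \ge M$. Given $\varepsilon>0$, choose $g' \in L^\infty(I,\mu)\cap\constone^\perp_v$ with $\norm{g-g'}_v < \varepsilon M$; then $\norm{dg-dg'}_e \le 2\norm{g-g'}_v < 2\varepsilon M$, whence $\lrmod{\norm{g}_v - \norm{g'}_v} < \varepsilon M$ and $\lrmod{\norm{dg}_e - \norm{dg'}_e} < 2\varepsilon M$. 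Letting $\varepsilon \to 0$ shows $\norm{dg'}_e^2/\norm{g'}_v^2$ can be made as close as desired to $\norm{dg}_e^2/\norm{g}_v^2$, so restricting the infimum defining $\lambda_G$ to bounded $g$ does not lower its value; the reverse inequality is trivial, giving \eqref{rayleigh quotient approximation for graphing}. I expect no real obstacle here: the argument is formally identical to that of Lemma \ref{denseapp}, and the only place where graphing structure intervenes — boundedness of $d$ — is handled once and for all by Lemma \ref{what eta really is}.
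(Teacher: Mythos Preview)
Your proposal is correct and follows exactly the route the paper intends: the paper's own proof is the single sentence ``The proof of Lemma \ref{denseapp} goes through mutatis mutandis,'' and you have simply spelled out that transcription, correctly identifying the boundedness of $d$ as the only analytic input and using the projection $P$ to produce bounded approximants. One small slip of phrasing: the approximation argument establishes that restricting to bounded $g$ does not \emph{raise} the infimum (i.e.\ $\inf_{L^\infty} \le \inf_{\text{all}}$), while the trivial direction is the reverse; your ``does not lower its value'' has these swapped, though the mathematics is right.
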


We proceed with the proof of Theorem \ref{cheeger-graphing} for graphings.
Let $g:I\to \R$ be an arbitrary $L^\infty$-map with $\norm{g}_v=1$ and $\ab{g, \constone}_v=0$.
To prove Cheeger's inequality it is enough to show that $\norm{dg}_e^2 \geq \frac{1}{8} h^2_G$.
Let
\begin{equation}
	t_0 = \sup\set{t\in \R:\ \vol_G(g^{-1}(-\infty, t))\leq \frac{1}{2}\vol_G(I)}
\end{equation}
and define $f=g-t_0$.
Then both the sets $\set{f < 0}$ and $\set{f>0}$ have volumes at most half of $\vol_G(I)$.
Also
\begin{equation}
	\norm{f}_v^2=\norm{g-t_0}_v^2 = \norm{g}_v^2+ \norm{t_0}_v^2-2t_0\ab{g, \constone}_v = 1+\norm{t_0}_v^2\geq 1
\end{equation}
Clearly, $df=dg$.
Therefore
\begin{equation}
	\label{passing from g to f}
	\norm{dg}_e^2\geq \frac{\norm{df}_e^2}{\norm{f}_v^2}
\end{equation}

\begin{lemma}\label{passing to squares in graphings}
	\begin{equation}
		\label{passing to squares}
		\int_{E_f} |df|^2\ d\eta 
		\geq \frac{1}{8\norm{f}_v^2} \lrb{\int_{E_f} |df^2_+|\ d\eta + \int_{E_f} |df^2_-|\ d\eta}^2
	\end{equation}
\end{lemma}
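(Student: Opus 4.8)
The plan is to transcribe the proof of Lemma~\ref{passing to squares in graphons} almost verbatim, replacing the measure $W(x,y)\,dx\,dy$ on $I^2$ throughout by the edge measure $d\eta$ and invoking Lemma~\ref{what eta really is} (equivalently Equation~\ref{what is eta in terms of mu}) whenever one must pass between an integral against $d\eta$ over a subset of $E$ and an integral against $d\mu$ over $I$. In the application $f$ is the bounded function $g-t_0$ of the proof of Theorem~\ref{cheeger-graphing}, so all integrands below lie in $L^1(I^2,\eta)$ and every interchange of integrals is legitimate.

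First I would record the analogues of Equations~\ref{passing to edges oriented by f-graphon} and~\ref{split in two parts in graphons}: since $(df)^2$ and $\eta$ are symmetric and $df$ vanishes on $\{(x,y)\in E:\ f(x)=f(y)\}$, one has $\norm{df}_e^2=\int_{E_f}|df|^2\,d\eta$, and since $f=f_+-f_-$ gives $df=df_+-df_-$ with $df_+(x,y)\,df_-(x,y)\le 0$ at every pair $(x,y)$ (the only nontrivial case being $f(x)$ and $f(y)$ of opposite sign), we get $|df|^2\ge|df_+|^2+|df_-|^2$ pointwise on $E_f$ and hence
\[
\int_{E_f}|df|^2\,d\eta\ \ge\ \int_{E_f}|df_+|^2\,d\eta+\int_{E_f}|df_-|^2\,d\eta .
\]

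Next I would treat the two halves symmetrically. Factoring $df_+^2=(f_+(y)-f_+(x))(f_+(y)+f_+(x))$ and applying Cauchy--Schwarz in $L^2(E_f,\eta)$ gives
\[
\int_{E_f}|df_+^2|\,d\eta\le\Big(\int_{E_f}|df_+|^2\,d\eta\Big)^{1/2}\Big(\int_{I^2}(|f(x)|+|f(y)|)^2\,d\eta\Big)^{1/2}.
\]
Bounding $(|f(x)|+|f(y)|)^2\le 2f(x)^2+2f(y)^2$ and using that, by the symmetry of $\eta$ together with Equation~\ref{what is eta in terms of mu}, $\int_{I^2}f(x)^2\,d\eta=\int_{I^2}f(y)^2\,d\eta=\int_I f^2\deg\,d\mu=\norm{f}_v^2$, the second factor is at most $2\norm{f}_v$; rearranging yields $\int_{E_f}|df_+|^2\,d\eta\ge\frac{1}{4\norm{f}_v^2}\big(\int_{E_f}|df_+^2|\,d\eta\big)^2$, and the identical computation gives $\int_{E_f}|df_-|^2\,d\eta\ge\frac{1}{4\norm{f}_v^2}\big(\int_{E_f}|df_-^2|\,d\eta\big)^2$. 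Plugging these into the previous display and using $a^2+b^2\ge\tfrac12(a+b)^2$ produces the asserted inequality with constant $8$.

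The only point requiring genuine attention beyond copying the graphon argument is the measure-theoretic bookkeeping: the reduction $\norm{df}_e^2=\int_{E_f}|df|^2\,d\eta$ and the identity $\int_{I^2}f(\cdot)^2\,d\eta=\norm{f}_v^2$ both rely on the symmetry of $\eta$ and on Lemma~\ref{what eta really is}, which is precisely what lets integrals against $\eta$ over subsets of $E$ be rewritten as integrals against $\mu$ of the strand-wise sums $\sum_y(\cdot)\chi_E(x,y)$. I expect this to be the main (and essentially the only) obstacle; once it is in place, every inequality above is formally identical to its counterpart in the proof of Lemma~\ref{passing to squares in graphons}.
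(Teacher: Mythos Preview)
Your proposal is correct and matches the paper's own proof essentially line by line: the paper also declares the argument to be ``an exact replica'' of Lemma~\ref{passing to squares in graphons}, singling out precisely the identity $\int_{I^2}f(x)^2\,d\eta=\int_I f^2\deg\,d\mu=\norm{f}_v^2$ (via Equation~\ref{what is eta in terms of mu}) as the one extra point needed in the graphing setting. Your identification of the measure-theoretic bookkeeping as the only genuine obstacle is exactly what the paper emphasizes.
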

\begin{proof} As in Lemma \ref{passing to squares in graphons}, we start by observing that
	\begin{equation}
	\label{passing to edges oriented by f} 
	\int_{E^+} |df|^2\ d\eta = \int_{E_f} |df|^2\ d\eta 
	\end{equation}
	The proof of Lemma \ref{passing to squares in graphings} is now  an exact replica of that of Lemma \ref{passing to squares in graphons}: the only extra point to note being that
	\begin{align}
	\begin{split}
	\int_{I^2}f(x)^2\ d\eta(x, y) &= \int_I \sum_y f(x)^2 \chi_E(x, y)\ d\mu(x) = \int_If(x)^2\deg(x)\ d\mu(x) = \norm{f}_v^2
	\end{split}
	\end{align}

We omit the details.
\end{proof}

The rest of the proof of Theorem \ref{cheeger-graphing} is quite similar to that of Theorem \ref{cheeger-graphon}. However, since this is one of the main theorems of this paper, we include the details for completeness.
The \hyperref[coarea formula for graphings]{Co-area Formula} Theorem \ref{coarea-graphing} gives:
\begin{equation}
	\int_{E_f} |df^2_+|\ d\eta = \int_0^\infty 2te_G(S_t^c, S_t)\ dt,
		\quad \text{and}\quad
	\int_{E_f} |df_-^2|\ d\eta = \int_{0}^\infty 2te_G(S_{-t}^c, S_{-t})\ dt
\end{equation}
But
\begin{align}
	\small
	\begin{split}
		\int_0^\infty 2te_G(S_t, S_t) \ dt &\geq h_G\int_0^\infty 2t\vol(S_t)\ dt = h_G\int_0^\infty 2t\lrb{\int_{S_{t}} \deg(x)\ d \mu(x)}\ dt\\
		&= h_G\int_0^\infty 2t\lrb{\int_I\chi_{S_{t}}(x) \deg(x)\ d \mu(x)}\ dt = h_G\int_0^\infty \int_I 2t\chi_{S_{t}}(x)\deg(x)\ d\mu(x)dt\\
		&= h_G\int_I\int_0^\infty 2t\chi_{S_{t}}(x)\deg(x)\ dtd\mu(x) = h_G\int_I\lrb{\int_0^\infty 2t\chi_{S_{t}}(x)\ dt}\deg(x)\ d\mu(x)\\ 
		&=h_G \int_{I}\lrb{ \int_0^{f_+(x)}2t\ dt}\deg(x) d\mu(x)\\
		&=h_G\int_{I} f_+^2(y)\deg(x) \ d\mu(x)
	\end{split}
\end{align}
Similarly
\begin{align}
	\small
	\begin{split}
		\int_0^\infty 2te_G(S_{-t}^c, S_{-t}) \ dt &\geq h_G\int_0^\infty 2t\vol(S_{-t}^c)\ dt = h_G\int_0^\infty 2t \lrb{\int_{S^c_{-t}}\deg(x)\ d\mu(x)} dt\\
		&= h_G\int_0^\infty 2t\lrb{\int_I\chi_{S_{-t}^c}(x)\deg(x)\ d\mu(x)} dt = h_G\int_0^\infty\int_I 2t\chi_{S^c_{-t}}(x)\deg(x)\ d\mu(x) dt\\
		&= h_G\int_I\int_0^\infty 2t\chi_{S^c_{-t}}(x)\deg(x)\ dtd\mu(x) = h_G\int_I\lrb{\int_0^\infty 2t\chi_{S^c_{-t}}(x)\ dt}\deg(x)d\mu(x)\\
		&= h_G\int_I\lrb{\int_0^{f_-(x)} 2t\ dt}\deg(x) d \mu(x)\\
		&= h_G\int_{I}f_-^2(x)\deg(x)\ d\mu(x, y)
	\end{split}
\end{align}
Therefore
\begin{align}
	\begin{split}
		\int_{E_f} |df^2_+|\ d\eta + \int_{E_f}|df^2_-|\ d\eta &\geq h_G \lrb{\int_{I} f_+^2(x)\deg(x) \ d\mu(x)+ \int_{I} f_-^2(x)\deg(x)\ d\mu(x)}\\
		&= h_G {\int_{I} (f_+^2(x)+ f_-^2(x))\deg(x) \ d\mu(x)}\\
		&= h_G \int_{I} f(x)^2\deg(x)\ d\mu(x)\\
		&= h_G \norm{f}^2_v
	\end{split}
\end{align}
Combining this with Equation \ref{passing to squares}, we have
\begin{equation}
	\int_{E_f} |df|^2\ d\eta \geq \frac{1}{8\norm{f}_v^2}  h^2_G\norm{f}_v^4
\end{equation}
This along with Equation \ref{passing to edges oriented by f}  gives
\begin{equation}
	\frac{\norm{df}_e^2}{\norm{f}_v^2} = \frac{\int_E |df|^2\ d\eta}{\norm{f}_v^2} \geq \frac{1}{8} h^2_G
\end{equation}
Lastly, using Equation \ref{passing from g to f} we have
\begin{equation}
	\norm{dg}_e^2\geq \frac{1}{8}h^2_G
\end{equation}
and Theorem \ref{cheeger-graphing} follows. \hfill $\Box$


\section{Cheeger constant and connectedness}
\label{section:cheeger constant and connectedness}
\subsection{A connected graphon with zero Cheeger constant}\label{sec:connectedzerocheegergraphon}

Recall that a graphon is connected if for all measurable subsets $A$ of $I$ with $0< \mu_L(A)< 1$ we have $e_W(A, A^c) \neq 0$.
More generally, given a graphon $W$ and a measurable subset $S$ of $I$, we say that the restriction $W\vert_S$ is \define{connected} if for all measurable subsets $A$ of $S$ with $0< \mu_L(A)< \mu_L(S)$, we have
\begin{equation}
	\int_{A\times (S\setminus A)} W>0
\end{equation}

\begin{lemma}
	\label{van kampen theorem for connectedness}
	Let $W$ be a graphon and $S$ and $T$ be measurable subsets of $I$ such that $S\cup T=I$ and $S\cap T$ has positive measure.
 	Further assume that $W|_S$ and $W|_T$ are connected.
	Then $W$ is connected.
\end{lemma}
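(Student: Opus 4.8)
The plan is to argue by contradiction, in the style of a measure-theoretic Mayer--Vietoris. Suppose $W$ is not connected. Then there is a measurable $A\subseteq I$ with $0<\mu_L(A)<1$ and $e_W(A,A^c)=\int_{A\times A^c}W=0$. Since $W\geq 0$, this forces $W=0$ a.e.\ on $A\times A^c$, and hence $\int_{R}W=0$ for every measurable $R\subseteq A\times A^c$, in particular for every rectangle contained in $A\times A^c$.

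First I would restrict attention to $S$. The sets $A\cap S$ and $S\setminus A$ partition $S$, and $(A\cap S)\times(S\setminus A)\subseteq A\times A^c$, so $\int_{(A\cap S)\times(S\setminus A)}W=0$. If we had $0<\mu_L(A\cap S)<\mu_L(S)$, this would contradict the connectedness of $W|_S$. Hence either $\mu_L(A\cap S)=0$ or $\mu_L(S\setminus A)=0$; that is, modulo a null set, $S\subseteq A^c$ or $S\subseteq A$. Running the identical argument with $T$ in place of $S$ gives that, modulo a null set, $T\subseteq A^c$ or $T\subseteq A$.

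Now I would go through the four combinations. If $S\subseteq A$ and $T\subseteq A$ (mod null), then $I=S\cup T\subseteq A$ mod null, forcing $\mu_L(A)=1$; if $S\subseteq A^c$ and $T\subseteq A^c$ (mod null), then $\mu_L(A)=0$; both contradict $0<\mu_L(A)<1$. In the two remaining cases $S$ and $T$ lie (mod null) on opposite sides of the partition $\{A,A^c\}$, so $S\cap T$ is contained in a null set, contradicting the hypothesis that $S\cap T$ has positive measure. Every case is impossible, so $e_W(A,A^c)\neq 0$ for all measurable $A$ with $0<\mu_L(A)<1$, i.e.\ $W$ is connected.

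There is no substantial obstacle here; the only points that need a little care are the bookkeeping with null sets (all the inclusions above hold only modulo null sets) and the remark that the hypothesis $\mu_L(S\cap T)>0$ forces $\mu_L(S),\mu_L(T)>0$, so the connectedness hypotheses on $W|_S$ and $W|_T$ are genuinely being used rather than being vacuous.
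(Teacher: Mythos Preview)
Your proof is correct and follows essentially the same route as the paper's: both argue by contradiction, restrict the disconnecting set $A$ to $S$ and $T$, invoke connectedness of $W|_S$ and $W|_T$ to force $A$ to be null or co-null in each piece, and then derive a contradiction from $S\cup T=I$ and $\mu_L(S\cap T)>0$. The paper trims the case analysis with a ``without loss of generality'' while you spell out all four combinations, but there is no substantive difference.
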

\begin{proof}
	Assume that $W$ is disconnected and let $A$ be a measurable subset of $I$ such that $0< \mu_L(A)< 1$ and $e_W(A, A^c)=0$.
	Then in particular we have
	\begin{equation}
		\int_{(A\cap S)\times (A^c\cap S)}W=0 \text{ and } \int_{(A\cap T)\times (A^c\cap T)}W=0
	\end{equation}
	The connectedness of $W|_S$ implies that either $A\cap S$ or $A^c\cap S$ has full measure in $S$.
	Without loss of generality assume that $A\cap S$ has full measure in $S$.
	Again, by the connectedness of $W|_T$ we have that $A\cap T$ or $A^c\cap T$ has full measure in $T$.
	In the former case we would have that $A$ has full measure in $I$ since $I=S\cup T$.
	In the latter case the measure of $S\cap T$ would be $0$.
	In any case we get a contradiction.
\end{proof}

\begin{example}
	\label{example:trivial leaf graphon}
	Consider the graphon $W$ given by the following figure.
	\begin{figure}[H]
		\centering
		\includegraphics{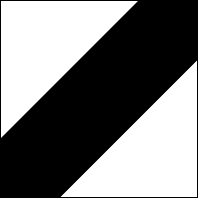}
				\caption{Example of a connected graphon.} 
		\label{trivial leaf graphon}
	\end{figure}
	\noindent

	The graphon takes the value $1$ at all the shaded points and $0$ at all other points.
	It follows by repeated use of Lemma \ref{van kampen theorem for connectedness}  that $W$ is connected.
\end{example}

Let $W$ be a graphon taking values in $\set{0, 1}$.
We call such a graphon a \define{neighborhood graphon} if $W^{-1}(1)$ contains an open neighborhood of the diagonal of the {\it open} square $(0,1) \times (0,1)$. Note that the graphon in Example \ref{example:trivial leaf graphon} contains instead an open neighborhood of the diagonal of the {\it closed} square $I^2$.

\begin{lemma}
	 \label{lemma:every neighborhood graphon is connected}
	 Every neighborhood graphon is connected.
\end{lemma}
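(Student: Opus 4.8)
The plan is to reduce the statement about a neighborhood graphon $W$ to the previous Lemma~\ref{van kampen theorem for connectedness}, via a covering argument on the open interval $(0,1)$. By hypothesis there is an open set $U \supseteq \{(x,x) : x \in (0,1)\}$ with $W = 1$ on $U$. First I would observe that for each $x \in (0,1)$ there is an open interval $J_x \ni x$ with $J_x \times J_x \subseteq U$; indeed, openness of $U$ gives a basic box neighborhood of $(x,x)$, and intersecting its two sides produces a single interval $J_x$ with $J_x \times J_x \subseteq U$. Then $W|_{J_x} \equiv 1$, so $W|_{J_x}$ is trivially connected (any $A \subseteq J_x$ with $0 < \mu_L(A) < \mu_L(J_x)$ has $\int_{A \times (J_x \setminus A)} W = \mu_L(A)\,\mu_L(J_x \setminus A) > 0$).

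**Key steps, in order.** (1) Cover $(0,1)$ by the intervals $\{J_x\}_{x \in (0,1)}$ and extract a countable subcover; more conveniently, for each $n$ cover the compact set $[1/n, 1 - 1/n]$ by finitely many of the $J_x$, so that $(0,1) = \bigcup_{i=1}^\infty S_i$ where each $S_i$ is an open interval with $W|_{S_i}$ connected, and the $S_i$ can be arranged so that consecutive ones overlap in a set of positive measure (reorder/merge so that $S_1 \cup \cdots \cup S_k$ is always an interval and $S_{k+1}$ overlaps it). (2) By induction on $k$, using Lemma~\ref{van kampen theorem for connectedness} with $S = S_1 \cup \cdots \cup S_k$ and $T = S_{k+1}$ (noting $S \cap T$ has positive measure and $W|_S$ is connected by the inductive hypothesis while $W|_T = W|_{S_{k+1}}$ is connected), conclude that $W|_{S_1 \cup \cdots \cup S_k}$ is connected for every $k$. (3) Pass to the limit: if $A \subseteq I$ witnesses disconnectedness of $W$, i.e. $0 < \mu_L(A) < 1$ and $e_W(A, A^c) = 0$, then $\int_{(A \cap S_k') \times (A^c \cap S_k')} W = 0$ where $S_k' = S_1 \cup \cdots \cup S_k$, so by connectedness of $W|_{S_k'}$ one of $A \cap S_k'$, $A^c \cap S_k'$ is null in $S_k'$; since the $S_k'$ increase to $(0,1)$ which has full measure in $I$, a monotonicity/continuity-of-measure argument forces $A$ or $A^c$ to be null in $I$, contradicting $0 < \mu_L(A) < 1$. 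Hence $W$ is connected.

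**Main obstacle.** The only real subtlety is the bookkeeping in step (2)–(3): making sure the countable union of the $S_i$ can be organized so that each partial union is an interval overlapping the next piece in positive measure (so Lemma~\ref{van kampen theorem for connectedness} applies at each stage), and then correctly taking the limit in step (3) — one must argue that "$A \cap S_k'$ is null in $S_k'$ for all $k$" together with $\mu_L\big((0,1) \setminus \bigcup_k S_k'\big) = 0$ implies $A$ is null, handling the possibility that which of $A, A^c$ is the null one could a priori switch with $k$ (it cannot, once $\mu_L(A \cap S_k') > 0$ for some $k$ it stays positive, pinning down the alternative). Everything else is routine: the local triviality of $W$ near the diagonal makes the "connected pieces" essentially free, so the argument is really just a connectedness-by-overlapping-charts induction built on the already-proven Lemma~\ref{van kampen theorem for connectedness}.
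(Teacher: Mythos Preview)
Your proof is correct and follows the same overall architecture as the paper's: first show that $W$ restricted to each compact subinterval of $(0,1)$ is connected, then run the contradiction/limit argument of your step~(3), which matches the paper's argument verbatim (the paper uses the nested family $S_n=(1/n,\,1-1/n)$, so the ``which alternative'' issue you flag is handled by monotonicity exactly as you describe). The only difference is in how connectedness on the compact subintervals is obtained: you do it by an explicit finite-cover induction on Lemma~\ref{van kampen theorem for connectedness} (tacitly using its obvious relativized form, where $S\cup T$ is a proper subset of $I$ rather than all of $I$), whereas the paper shortcuts this by observing that the restriction of $W$ to $S_n\times S_n$ dominates a copy of the graphon of Example~\ref{example:trivial leaf graphon}, whose connectedness was itself established by repeated use of that same lemma---so your argument is essentially an inlining of theirs.
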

\begin{proof}
	Let $W$ be a neighborhood graphon.
	For each $n>2$ let $S_n$ denote the interval $(1/n,  1-1/n)$.
	Each $W|_{S_n}$ is connected.
	This is because a copy of the graphon shown in Figure \ref{trivial leaf graphon} is embedded in the restriction of $W$ over $S_n\times S_n$.

	We argue by contradiction. Suppose that $W$ is disconnected.
	The there is $A\subseteq I$ with $0< \mu_L(A)< 1$ such that $e_W(A, A^c)=0$.
	Therefore for each $n$ we have $\int_{(A\cap S_n)\times (S_n\setminus A)}W=0$.
	By connectedness of $W|_{S_n}$, we must have that for any $n>2$, either $\mu_L(A\subseteq S_n)=\mu_L(S_n)$ or $\mu_L(A\cap S_n)=0$.
	If the former happens for some $n$, then it must happen for all $n$, and consequently $A$ is of full measure in $I$.
	The other possibility is that $\mu_L(A\cap S_n)=0$ for all $n$, but then $A$ has measure $0$.
	So in any case, we have a contradiction.
\end{proof}

\begin{example}
	\label{example:leaf graphon}
	A particular way of constructing a neighborhood graphon is the following.
	Let $f:I\to I$ be a continuous map such that $f(x)>x$ for all $0< x< 1$.
	Define a graphon $W_f$ as
	$$
		W_f(x, y)=
		\left\{
		\begin{array}{ll}
			1 & \text{if } x\leq y\leq f(x)\\
			0 & \text{if } f(x) < y\\
			W(y, x) &\text{if } x> y
		\end{array}
		\right.
	$$
	In other words, $W_f$ takes the value $1$ in the region trapped between the graph of $f$ and the reflection of the graph of $f$ about the $y=x$ line, and is $0$ everywhere else.
	For example, let $f(x)=\sqrt{x}$.
	Then the following diagram illustrates what $W_f$ looks like.
	\begin{figure}[H]
	\centering
		\includegraphics{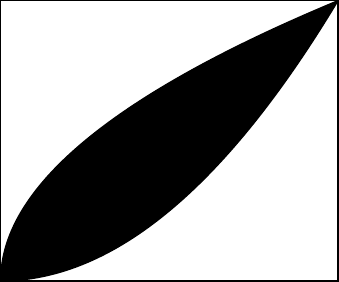}
		\caption{An example of a neighborhood graphon: $W_f$ corresponding to $f(x)=\sqrt{x}$.}
		\label{leaf of the square root}
	\end{figure}
	\noindent
	The graphon shown in Figure \ref{trivial leaf graphon} is also an example of a neighborhood graphon arising as $W_f$ for a suitably chosen continuous map $f:I\to I$.
\end{example}

\begin{example}
	\label{connected graphon with vanishing cheeger constant}
	Unlike in the case of finite graphs, the Cheeger constant of a connected graphon may be zero, as illustrated by Figure \ref{graphon with vanihing cheeger constant}.
	The graphon takes the value $1$ at all the shaded points, either gray or black, and the value $0$ at all the unshaded points.
	Call this graphon $W$.
	\begin{figure}[H]
		\centering
		\includegraphics{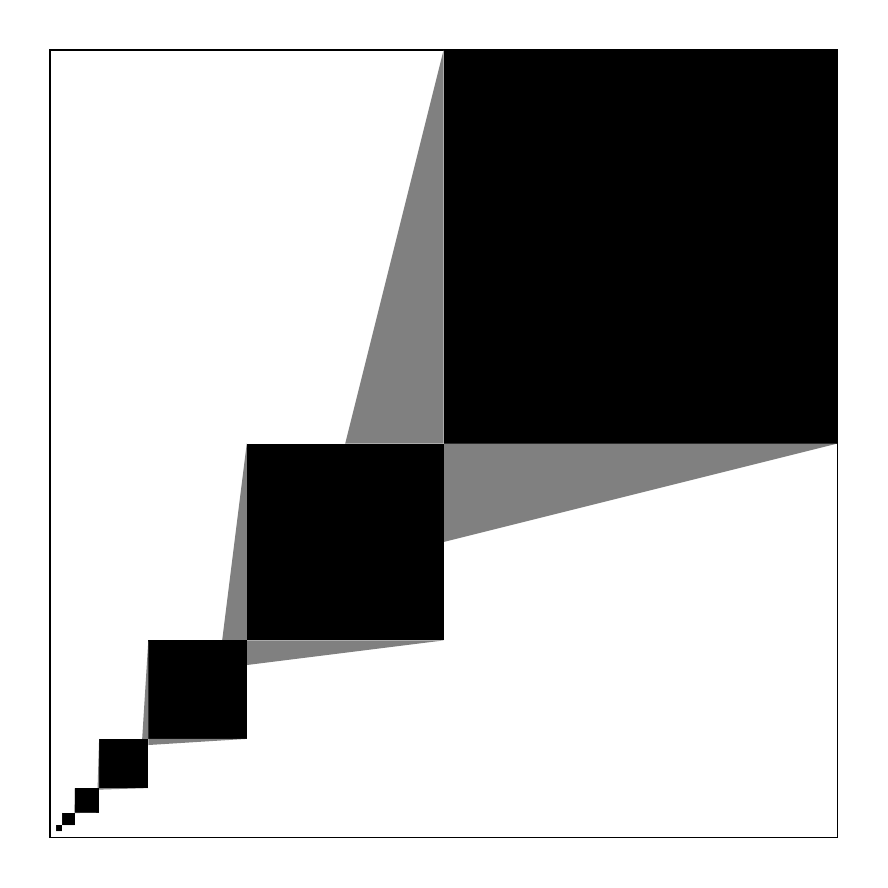}
		\caption{Example of a connected graphon whose Cheeger constant vanishes.} 
		\label{graphon with vanihing cheeger constant}
	\end{figure}

	The bottom left endpoints of the black squares in the above figure have coordinates $(1/2^n, 1/2^n)$, $n=1, 2, 3, \ldots$.
	The lengths of horizontal edges of the gray triangles above the $y=x$ line are $1/2^{2n+1}$, $n=1, 2, 3, \ldots$.
	Let $A_n$ be the interval $[0, 1/2^n]$.
	Then $e(A_n, A_n^c)$ equals
	 the measure of the gray region in $A^n\times A_n^c$.
	But there is only one gray triangle in this region.
	The sides of this right triangle (other than the hypotenuse) have lengths $1/2^{2n+1}$ and $1/2^n$.
	Thus $e(A_n, A_n^c)= 1/2^{3n+2}$.
	Let $V$ denote the total measure of the points shaded black.
	Then $\vol(A_n)\geq V/4^n + e(A_n, A_n^c)$ because the measure of the black region inside $A_n\times I$ is exactly $V/4^n$.
	For large $n$ we have $\vol(A_n)$ is at most half the total volume.
	Thus for large $n$ we have
	\begin{equation}
	h_W(A_n)= \frac{e(A_n, A_n^c)}{\vol(A_n)}\leq \frac{1/2^{3n+2}}{V/4^n + 1/2^{3n+2}} = \frac{1/2^{n+2}}{V+1/2^{n+2}}
	\end{equation}
	This is zero in the limit and thus the Cheeger constant of this graphon is zero.
	This graphon is connected by Lemma \ref{lemma:every neighborhood graphon is connected} because it is a neighborhood graphon.
\end{example}

\subsection{A connected graphing with zero Cheeger constant}
\label{subsection:connected graphing with vanishing cheeger constant} We prove in this section that the {\it irrational cyclic graphing} \cite[Example 18.17]{lovasz_large_networks} has zero Cheeger constant.

Let $a_0$ be an irrational number.
We get a bounded degree Borel graph $(I, E)$ on $I$ by joining two points $x$ and $y$ if $|x-y|=a_0$.
The triple $(I, E, \mu_L)$ then becomes a graphing (recall that $\mu_L$ denotes the Lebesgue measure).

An equivalent way of thinking of this graphing is as follows:
Let $T:S^1\to S^1$ be the rotation of the unit circle by an angle which is an irrational multiple of $2\pi$.
We get a Borel graph on $S^1$ by declaring $(x, y)\in S^1\times S^1$ to be an edge if and only if $T(x)=y$ or $T^{-1}(x)=y$.
Equipping the circle with the Haar measure $\mu_H$, this Borel graph is in fact a graphing \cite[Example 18.17]{lovasz_large_networks}.
We will denote this graphing by $G$.
This is a connected graphing because if $A$ is a measurable subset of $S^1$ such that $e_G(A, A^c)=0$, then we would have that $T^{-1}(A)\cap A^c$ has measure $0$
and hence $A$ is $T-$invariant. By  ergodicity of the action of $T$ on $(S^1,\mu_H)$, we infer that $A$ is either of  zero or  full measure.

We show that the Cheeger constant of this graphing is zero.
Let $X$ be a small arc of the circle with one end-point  at $(1, 0)$.
\begin{figure}[H]
	 \centering
	 \includegraphics[scale=1.5]{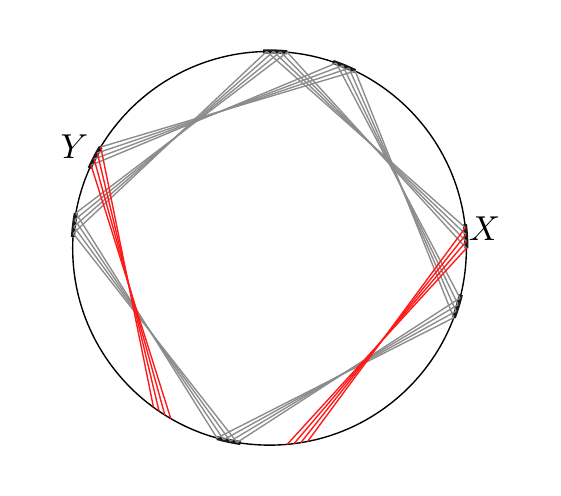}
	 \caption{Graphing corresponding to an irrational rotation of the circle.}
\end{figure}
\noindent
Given  $N > 0$, we can choose $X$ small enough   so that\\

\begin{enumerate}
\item $T^i(X) \cap T^j(X) = \emptyset$, for $0\leq i < j \leq N$,\\

\item For $A:=X\sqcup T(X)\sqcup\cdots\sqcup T^N(X)$, $\mu_H(A) \leq \frac{1}{2}$.\\

\end{enumerate}

Write $Y=T^N(X)$.
The only edges that contribute to $e_G(A, A^c)$ are the ones going from $Y$ to $T(Y)$ and the ones going from $T^{-1}(X)$ to $X$.
These are shown in red in the above figure.
Thus $e_G(A, A^c)\leq 2\mu_H(X)$.
Therefore we have
\begin{equation}
	h_G\leq
	h_G(A)
	\leq \frac{e_G(A, A^c)}{\vol_G(A)}
	\leq 2\frac{\mu_H(A)}{(N+1)\mu_H(A)} = 2/(N+1)
\end{equation}
Since $N$ is arbitrary, we conclude that $h_G=0$.

\begin{rmk}
An example of a connected graphing $G$ with positive $\lambda(G)$ and hence (by Theorem \ref{cheeger-graphing}) positive Cheeger constant $h_G$ has been described by Lovasz in \cite[Example 21.5]{lovasz_large_networks} under the rubric of `expander graphings'. 
\end{rmk}
\subsection{A necessary and sufficient condition for connectedness of a graphon}

In the special case that a graphon $W$ has degree of every vertex uniformly bounded below, we shall now proceed to give  a necessary and sufficient condition in terms of the Cheeger constant for $W$ to be connected. This is analogous to the statement that a finite graph is connected if and only if its Cheeger constant is positive.

\begin{prop}\label{connectedpositivecheeger}
	 Let $\varepsilon>0$ and $W$ be a graphon such that the degree $d_W(x) \geq \varepsilon$ for all $x \in I$.
	Then $W$ is connected if and only if $h_W > 0$.
\end{prop}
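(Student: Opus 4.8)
The plan is to prove the two implications separately. The implication $h_W>0\Rightarrow W$ connected is the contrapositive of an immediate observation, while for the converse the hypothesis $d_W\ge\varepsilon$ enters essentially; I would deduce the converse from the Buser inequality (Theorem~\ref{buser-graphon}) together with a compactness argument for the operator $\tfrac1{d_W}T_W$.

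\emph{Easy direction.} Suppose $W$ is not connected, so there is a measurable $A$ with $0<\mu_L(A)<1$ and $e_W(A,A^c)=0$. Since $\varepsilon\le d_W\le 1$ we get $\vol_W(A)=\int_A d_W(x)\,dx\ge\varepsilon\,\mu_L(A)>0$, and likewise $\vol_W(A^c)>0$, so $h_W\le h_W(A)=0$. Hence $h_W>0$ forces $W$ connected; note that even for this direction the lower degree bound is what guarantees the denominator of $h_W(A)$ does not vanish.

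\emph{Converse.} Assume $W$ is connected. By Theorem~\ref{buser-graphon} it suffices to show $\lambda_W>0$. Write $\Delta_W=I-S$ with $S=\tfrac1{d_W}T_W$, $T_Wf(x)=\int_0^1 W(x,y)f(y)\,dy$, as in Section~\ref{graphon}. Because $\varepsilon\le d_W\le 1$ we have $\varepsilon\,\mu_L\le\nu\le\mu_L$, so $L^2(I,\nu)$ and $L^2(I,\mu_L)$ coincide with equivalent norms. Moreover $S$ has integral kernel $(x,y)\mapsto W(x,y)/d_W(x)$, which is bounded by $1/\varepsilon$ and hence lies in $L^2(I^2,\mu_L\otimes\mu_L)$; therefore $S$ is Hilbert--Schmidt, in particular compact, on $L^2(I,\mu_L)$, hence also on $L^2(I,\nu)$. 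It is self-adjoint for $\ab{\cdot,\cdot}_v$, as $S=I-\Delta_W$ and $\Delta_W=d^\ast d$ is self-adjoint. By the spectral theorem for compact self-adjoint operators, $\sigma(S)$ is $\{0\}$ together with real eigenvalues accumulating only at $0$, so by the spectral mapping theorem $\sigma(\Delta_W)=\{1-\mu:\mu\in\sigma(S)\}$ accumulates only at $1$. By the discussion preceding Lemma~\ref{ess-const-graphon}, connectedness gives $\ker\Delta_W=\ab{\constone}$, one-dimensional; thus $0$ is an isolated eigenvalue of $\Delta_W$ of multiplicity one. The orthogonal splitting $L^2(I,\nu)=\ab{\constone}\oplus\constone^\perp_v$ block-diagonalizes $\Delta_W$, so $\sigma\big(\Delta_W|_{\constone^\perp_v}\big)=\sigma(\Delta_W)\setminus\{0\}$, which is closed, bounded below, and does not accumulate at $0$; hence $\lambda_W=\inf\big(\sigma(\Delta_W)\setminus\{0\}\big)>0$. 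Finally Theorem~\ref{buser-graphon} gives $h_W\ge\lambda_W/2>0$.

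\emph{Main obstacle, and variants.} The crux is the compactness of $S$, which is precisely where $d_W\ge\varepsilon$ is used: without it $S$ need not even be bounded, $0$ need not be isolated in $\sigma(\Delta_W)$, and the conclusion genuinely fails (Example~\ref{connected graphon with vanishing cheeger constant}). A second proof, avoiding Theorem~\ref{intro}, would argue directly on a minimizing sequence $A_n$ for $h_W$: one first checks $\mu_L(A_n)\not\to0$ and $\mu_L(A_n^c)\not\to0$ (otherwise $h_W(A_n)\to1$, using $e_W(A_n,A_n)\le\mu_L(A_n)^2$ and $\vol_W(A_n)\ge\varepsilon\mu_L(A_n)$), then passes to a weak-$\ast$ limit $g\in L^\infty(I)$ of $\chi_{A_n}$ and uses compactness of $T_W$ to get $e_W(A_n,A_n^c)\to\int_{I^2}g(x)(1-g(y))W(x,y)\,dxdy$; if $h_W=0$ this integral vanishes, so $W=0$ a.e.\ on $\{g>t\}\times\{g\le t\}$ for every $t\in(0,1)$, while $g$ is nonconstant because $\int_I g\,d\mu_L\in(0,1)$, so Lemma~\ref{ess-const-graphon} yields some $t$ with $0<\mu_L(\{g>t\})<1$, contradicting connectedness. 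The paper's second proof replaces this hands-on step by the structural lemma for connected graphons from \cite{bbcr2010}.
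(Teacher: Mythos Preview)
Your main proof is correct and follows essentially the same route as the paper's first proof: both establish compactness of $S=(1/d_W)T_W$ on $L^2(I,\nu)$ via the lower degree bound, combine this with the fact that connectedness forces $\ker\Delta_W=\ab{\constone}$, and then invoke the Buser inequality. The only cosmetic difference is that the paper phrases the spectral step using approximate eigenvalues (Lemmas~\ref{rayleigh quotient is an approximate eigenvalue} and~\ref{approximate eigenvalue of a compact operator is an eigenvalue}) while you invoke the spectral theorem for compact self-adjoint operators directly; the content is the same.

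Your sketched variant via a weak-$*$ limit of $\chi_{A_n}$ is a genuine alternative to the paper's second proof, which instead appeals to the $(a,b)$-cut lemma of \cite{bbcr2010}. One small gap in your sketch: the assertion ``$g$ is nonconstant because $\int_I g\,d\mu_L\in(0,1)$'' does not by itself rule out $g\equiv c\in(0,1)$; however, that case is immediately excluded since it would give $\int_{I^2}g(x)(1-g(y))W(x,y)\,dxdy=c(1-c)\vol_W(I)>0$, contradicting the vanishing you just derived.
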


We provide two proofs of the above result.
The first is an application  of Theorem \ref{buser-graphon} the Buser inequality for graphons  and is essentially self-contained using some basic facts about compact operators. The second proof uses a structural lemma about connected graphons proved in \cite{bbcr2010}. 

\begin{definition}{\cite[pg. 196]{limaye_functional_analysis}}
	We say that $\lambda\in \R$ is an \define{approximate eigenvalue} of a bounded linear operator $T:H\to H$ of a Hilbert space $H$ if the image of $T-\lambda I$ is not bounded below.
\end{definition}

\begin{lemma}
	\label{rayleigh quotient is an approximate eigenvalue}
	\cite[Lemma 27.5(a)]{limaye_functional_analysis}
	For any bounded linear self-adjoint operator $T:H\to H$ on a Hilbert space $H$, we have that
\begin{equation}
		\inf_{x\in H:\ \norm{x}=1} \ab{Tx, x}
\end{equation}
is an approximate eigenvalue of $T$.
\end{lemma}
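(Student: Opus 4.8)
The plan is to reduce to the case of a positive semi-definite operator and then exploit the Cauchy--Schwarz inequality attached to the positive sesquilinear form $(x,y)\mapsto\langle Tx,y\rangle$. Set $m=\inf_{\|x\|=1}\langle Tx,x\rangle$ and $S=T-mI$. Then $S$ is bounded and self-adjoint, $\langle Sx,x\rangle=\langle Tx,x\rangle-m\|x\|^2\ge 0$ for every $x\in H$ (apply the definition of $m$ to $x/\|x\|$), and $\inf_{\|x\|=1}\langle Sx,x\rangle=0$. Since $T-mI=S$, it suffices to prove that $S$ is not bounded below, i.e. that there is a sequence of unit vectors $x_n$ with $\|Sx_n\|\to 0$; this is exactly the assertion that $m$ is an approximate eigenvalue of $T$.

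By definition of the infimum, choose unit vectors $x_n$ with $\langle Sx_n,x_n\rangle\to 0$. The key point is that, because $S\ge 0$, the form $(x,y)\mapsto\langle Sx,y\rangle$ satisfies the Cauchy--Schwarz inequality $|\langle Sx,y\rangle|^2\le\langle Sx,x\rangle\,\langle Sy,y\rangle$ for all $x,y\in H$ (this holds for any positive semi-definite Hermitian form, no definiteness needed). Taking $x=x_n$ and $y=Sx_n$ gives
\[
\|Sx_n\|^4=|\langle Sx_n,Sx_n\rangle|^2\le\langle Sx_n,x_n\rangle\,\langle S(Sx_n),Sx_n\rangle\le\|S\|\,\|Sx_n\|^2\,\langle Sx_n,x_n\rangle,
\]
where the last step uses $\langle S(Sx_n),Sx_n\rangle\le\|S(Sx_n)\|\,\|Sx_n\|\le\|S\|\,\|Sx_n\|^2$. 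When $Sx_n\ne 0$ we may cancel $\|Sx_n\|^2$ to get $\|Sx_n\|^2\le\|S\|\,\langle Sx_n,x_n\rangle$, and this inequality is trivially true when $Sx_n=0$. Hence $\|Sx_n\|^2\le\|S\|\,\langle Sx_n,x_n\rangle\to 0$, so $\|Sx_n\|\to 0$ while $\|x_n\|=1$; therefore $S$ is not bounded below and the lemma follows.

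I do not expect a genuine obstacle here: once one notices that the positivity of $S$ makes Cauchy--Schwarz available, the rest is a short manipulation. The only small points needing care are the validity of Cauchy--Schwarz for a merely positive semi-definite (not necessarily definite) form, which is standard, and the separate handling of the trivial case $Sx_n=0$ (equivalently $S=0$, i.e. $T=mI$), in which any unit vector already witnesses that $T-mI$ is not bounded below. An alternative is to argue directly with $T$ and the shifted form $\langle Tx,y\rangle-m\langle x,y\rangle$, but passing to $S$ keeps the positivity hypothesis visible and the bookkeeping cleaner.
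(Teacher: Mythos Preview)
Your proof is correct and is essentially the standard argument: shift to the positive operator $S=T-mI$, use the Cauchy--Schwarz inequality for the semidefinite form $\langle S\cdot,\cdot\rangle$ with $y=Sx_n$, and conclude $\|Sx_n\|^2\le\|S\|\langle Sx_n,x_n\rangle\to 0$. The paper itself gives no proof of this lemma; it simply cites \cite[Lemma 27.5(a)]{limaye_functional_analysis}, so there is nothing to compare against beyond noting that your argument is the usual one found in that reference.
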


\begin{lemma}
	 \label{approximate eigenvalue of a compact operator is an eigenvalue}
	 \cite[Lemma 28.4]{limaye_functional_analysis}
	 If $T:H\to H$ is a compact operator then every approximate eigenvalue of $T$ is an actual eigenvalue of $T$.
\end{lemma}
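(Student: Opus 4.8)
The plan is to unwind both definitions and then use compactness to upgrade an approximating sequence into a genuine eigenvector. Suppose $\lambda$ is an approximate eigenvalue of the compact operator $T\colon H\to H$; that is, $T-\lambda I$ is not bounded below. First I would extract, from the fact that $T-\lambda I$ is not bounded below, a sequence of unit vectors $x_n\in H$ with $\norm{(T-\lambda I)x_n}<1/n$: for each $n$ there is a nonzero vector on which $T-\lambda I$ shrinks the norm by a factor more than $n$, and one normalizes it. Thus $(x_n)$ is bounded with $(T-\lambda I)x_n\to 0$.

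Next I would use compactness of $T$: since $(x_n)$ is bounded, $(Tx_n)$ has a subsequence $Tx_{n_k}\to y$ for some $y\in H$. From $\lambda x_{n_k}=Tx_{n_k}-(T-\lambda I)x_{n_k}$ it follows that $\lambda x_{n_k}\to y$. When $\lambda\neq 0$ this forces $x_{n_k}\to x:=\lambda^{-1}y$, and since each $x_{n_k}$ is a unit vector, $\norm{x}=1$, so $x\neq 0$. Finally, continuity of $T$ gives $Tx_{n_k}\to Tx$, while also $Tx_{n_k}\to y=\lambda x$; hence $Tx=\lambda x$ with $x\neq 0$, so $\lambda$ is an eigenvalue. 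This covers the only case relevant here, namely $\lambda\neq 0$: in the application one takes the compact operator $\tfrac1{d_W}T_W$ on $L^2(I,\nu)$ (compact since its kernel $W$ is bounded and $d_W\geq\varepsilon$), for which the pertinent approximate eigenvalue is $1-\lambda_W$, and $\lambda_W\le 2h_W\le 1$, with equality forcing $h_W=\tfrac12>0$, a subcase in which there is nothing left to prove.

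The one delicate point — and the reason the statement should be read with care — is $\lambda=0$. On an infinite-dimensional $H$ a compact operator is never bounded below (for an orthonormal sequence $(e_n)$ one has $\norm{Te_n-Te_m}\geq c\sqrt 2$ if $\norm{Tx}\geq c\norm{x}$ for all $x$, contradicting compactness), so $0$ is automatically an approximate eigenvalue, yet it need not be an eigenvalue: the diagonal operator $e_n\mapsto n^{-1}e_n$ on $\ell^2$ is compact and injective. Hence for $\lambda=0$ the conclusion holds only under an extra hypothesis (for instance in finite dimensions, or when $0$ is already known to lie in the point spectrum), which is how \cite[Lemma 28.4]{limaye_functional_analysis} is to be understood, and which is immaterial for us since the approximate eigenvalue we feed in is nonzero. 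The main obstacle is therefore one of bookkeeping rather than of proof: once $\lambda\neq 0$ is in force, the two short paragraphs above constitute the complete argument, and I would present exactly those.
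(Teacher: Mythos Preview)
The paper does not prove this lemma at all; it simply quotes it from \cite[Lemma 28.4]{limaye_functional_analysis} and uses it as a black box. So there is no ``paper's own proof'' to compare against, and what you have written is the standard compactness argument that any textbook would give: extract a near-eigenvector sequence from the failure of $T-\lambda I$ to be bounded below, pass to a subsequence along which $Tx_{n_k}$ converges, and for $\lambda\neq 0$ conclude that $x_{n_k}$ itself converges to a genuine eigenvector. That argument is correct as written.

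Your observation about $\lambda=0$ is well taken and in fact sharper than the paper's formulation: as you note, on an infinite-dimensional Hilbert space every compact operator has $0$ as an approximate eigenvalue but need not have $0$ as an eigenvalue, so the lemma as stated is literally false without the restriction $\lambda\neq 0$ (and indeed Limaye's Lemma~28.4 carries that hypothesis). You are also right that this does not affect the application in the proof of Proposition~\ref{connectedpositivecheeger}: the approximate eigenvalue fed into the lemma is $1-\lambda_W$, and $\lambda_W\leq 2h_W\leq 1$ by Theorem~\ref{buser-graphon} and Lemma~\ref{lemma:bound on the cheeger constant of a graphon}; the boundary case $\lambda_W=1$ forces $h_W=1/2>0$, which is the desired conclusion anyway. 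So your bookkeeping closes the only gap that the paper's citation glosses over.
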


\begin{lemma}
	\label{inverse diagonal times adjacency operator is compact}
	Let $W$ be a connected graphon with $d_W$ bounded below by a positive real.
	Then the map $(1/d_W)T_W:L^2(I, \nu)\to L^2(I, \nu)$ is a compact operator.
\end{lemma}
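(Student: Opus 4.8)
The plan is to reduce the assertion to the classical fact that a Hilbert--Schmidt integral operator is compact, after replacing the weighted Hilbert space $L^2(I, \nu)$ by the standard space $L^2(I, \mu_L)$.

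The first step is to observe that the two Hilbert spaces coincide as topological vector spaces. Since $0 \leq W \leq 1$ we have $d_W(x) = \int_0^1 W(x, y)\ dy \leq 1$ for every $x$, while by hypothesis $d_W(x) \geq \varepsilon$ for some fixed $\varepsilon > 0$. Hence $\varepsilon\, \norm{f}_{L^2(I)}^2 \leq \norm{f}_v^2 \leq \norm{f}_{L^2(I)}^2$ for all $f$, so $L^2(I, \nu)$ and $L^2(I, \mu_L)$ are the same underlying vector space carrying equivalent norms, hence the same topology. As compactness of an operator is a purely topological notion, it suffices to prove that the operator $(1/d_W)T_W$, regarded as acting on $L^2(I, \mu_L)$, is compact.

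The second step is to factor this operator as a composition. The operator $T_W$ given by $(T_Wf)(x) = \int_0^1 W(x, y) f(y)\ dy$ is the integral operator with kernel $W$; since $W \in L^\infty(I^2) \subseteq L^2(I^2, \mu_L \otimes \mu_L)$, it is Hilbert--Schmidt, and therefore compact, on $L^2(I, \mu_L)$. Multiplication by $1/d_W$ is a bounded operator on $L^2(I, \mu_L)$ of operator norm at most $1/\varepsilon$. Since the composition of a bounded operator with a compact operator is compact, and $(1/d_W)T_W$ is precisely the composition of multiplication by $1/d_W$ with $T_W$, it is compact on $L^2(I, \mu_L)$, and hence on $L^2(I, \nu)$ as well.

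There is no serious obstacle here; the only points that require care are bookkeeping ones --- verifying carefully that the weighted and unweighted $L^2$-norms are genuinely equivalent (this is exactly where the lower bound $d_W \geq \varepsilon$ is used) and checking that the operator is literally unchanged when one passes between the two norms, so that the Hilbert--Schmidt criterion applies verbatim. Everything else is standard functional analysis: Hilbert--Schmidt operators are compact, and the compact operators form a two-sided ideal among the bounded operators.
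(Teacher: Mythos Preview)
Your proof is correct and follows essentially the same route as the paper: establish that the norms on $L^2(I,\nu)$ and $L^2(I,\mu_L)$ are equivalent (using $\varepsilon \le d_W \le 1$), note that $T_W$ is compact on $L^2(I,\mu_L)$ (the paper cites Lov\'asz, you invoke Hilbert--Schmidt directly), compose with the bounded multiplication by $1/d_W$, and transfer back. The only cosmetic difference is that the paper spells out the sequential characterization of compactness to pass between the two spaces, whereas you invoke the cleaner observation that compactness is a topological property preserved under equivalent norms.
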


\begin{proof}
	Since $d_W$ is  bounded below, it follows that $L^2(I, \mu_L)$ and $L^2(I, \nu)$ have comparable norms. Therefore $I:L^2(I, \mu_L)\to L^2(I, \nu)$ is a bounded linear isomorphism.
 	The operator $T_W:L^2(I, \mu_L)\to L^2(I, \mu_L)$ is compact \cite[Section 7.5]{lovasz_large_networks}.
 Since $d_W$ is bounded below, i.e.\  $1/d_W$ is $L^\infty$, it follows that the operator $(1/d_W)T_W:L^2(I, \mu_L)\to L^2(I, \mu_L)$ is also compact.

 Take any bounded sequence $(f_n)$ in $L^2(I, \nu)$.
 Then $(f_n)$ is bounded in $L^2(I, \mu_L)$ too because of comparability of norms.
  Since $(1/d_W)T_W:L^2(I, \mu_L)\to L^2(I, \mu_L)$ is compact, there exists a subsequence $(f_{n_k})$ such that $(1/d_W)(T_Wf_{n_k})$ converges in $L^2(I, \mu_L)$.
 Again, the comparability of norms give that $(1/d_W)(T_W f_{n_k})$ converges in $L^2(I, \nu)$ as required.
\end{proof}

Note that for any graphon $W$, the Laplacian $\Delta_W:L^2(I, \nu)\to L^2(I, \nu)$ restricts to  a linear operator $\Delta_W:\constone^{\perp}_v\to \constone^\perp_v$.\\

\begin{proof} of Proposition \ref{connectedpositivecheeger}:\\
	 If $h_W>0$ then clearly $W$ is connected.
	 So we need to prove the other direction.
	 Let $W$ be a connected graphon
	  with $d_W(x)\geq \varepsilon$ for all $x\in I$.
	 Lemma \ref{inverse diagonal times adjacency operator is compact} ensures that $(1/d_W)T_W$ is a compact operator on $L^2(I, \nu)$ and it is easy to check that it restricts to a linear operator from $\constone^\perp_v$ to itself. 
	Throughout we will think of $\Delta_W$ and $(1/d_W)T_W$ as linear operators in $\constone^\perp_v$.
	Now by Lemma \ref{rayleigh quotient approximation} $\lambda(W)$ is an approximate eigenvalue of $\Delta_W$.
	Thus the image of $\Delta_W-\lambda(W)I = (1-\lambda(W))I-(1/d_W) T_W$ is not bounded below in $\constone^\perp_v$.
	Hence $1-\lambda(W)$ is an approximate eigenvalue of $(1/d_W)T_W$.
	But $(1/d_W)T_W:\constone^\perp_v\to \constone^\perp_v$ is a compact operator, and thus by Lemma \ref{approximate eigenvalue of a compact operator is an eigenvalue} we have that $1-\lambda(W)$ is in fact an eigenvalue of $(1/d_W)T_W$.
	Therefore $\lambda(W)$ is an eigenvalue of $\Delta_W$.
	Let $f\in \constone^\perp_v$ be nonzero such that $\Delta_Wf=\lambda(W)f$.
	If $h_W$ were equal to $0$, then by the Buser inequality (Theorem \ref{buser-graphon}) we have that $\lambda(W)=0$.
	Thus $\Delta_Wf=0$, which is equivalent to saying that $df=0$.
	But as observed in the first paragraph of Section \ref{sec:buser-graphon} we then have that $f$ is a constant function and hence the only way it can belong to $\constone^\perp_v$ is that $f=0$--a contradiction.
\end{proof}

Now we give the second proof of Proposition \ref{connectedpositivecheeger}.


\begin{defn}
	 \cite{bbcr2010}
	 Let $W$ be a graphon and $0< a, b< 1$ be real numbers.
	 An $(a, b)$-\define{cut} in $W$ is a partition $\set{A, A^c}$ of $I$ with $a<\mu_L(A)< 1-a$ such that $e_W(A, A^c)\leq b$.
\end{defn}
Note that a graphon is connected if and only if it admits no $(a, 0)$-cut for any $0< a\leq 1/2$.

\begin{lemma}
	 \label{connected graphons does not admit an ab cut}
	 \cite[Lemma 7]{bbcr2010}
	 Let $W$ be a connected graphon and $0< a< 1/2$.
	 Then there is some $b>0$ such that $W$ admits no $(a, b)$-cut.
	 \vspace{1em}
\end{lemma}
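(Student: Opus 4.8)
The plan is to argue by contradiction, using weak compactness in $L^2$ together with the compactness of the integral operator $T_W$. Suppose that for every $b>0$ the graphon $W$ admits an $(a,b)$-cut; taking $b=1/n$ we get measurable sets $A_n\subseteq I$ with $a<\mu_L(A_n)<1-a$ and $e_W(A_n,A_n^c)\le 1/n$. The indicators $\chi_{A_n}$ form a bounded sequence in $L^2(I,\mu_L)$, so after passing to a subsequence we may assume $\chi_{A_n}\rightharpoonup\phi$ weakly in $L^2(I,\mu_L)$, where $0\le\phi\le 1$ a.e. (the set of such functions is convex and $L^2$-closed, hence weakly closed) and $\int_I\phi\,d\mu_L=\lim_n\mu_L(A_n)\in[a,1-a]$.

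Next I would rewrite $e_W(A_n,A_n^c)$ using $T_W$. Since $W$ is bounded, $d_W=T_W\constone\in L^\infty(I)\subseteq L^2(I)$, and $e_W(A_n,A_n^c)=\ab{\chi_{A_n},T_W(\constone-\chi_{A_n})}_{L^2(I)}=\ab{\chi_{A_n},d_W}_{L^2(I)}-\ab{\chi_{A_n},T_W\chi_{A_n}}_{L^2(I)}$. The first term tends to $\ab{\phi,d_W}_{L^2(I)}$ by weak convergence. For the second term I use that $T_W:L^2(I,\mu_L)\to L^2(I,\mu_L)$ is compact (\cite[Section 7.5]{lovasz_large_networks}), so $T_W\chi_{A_n}\to T_W\phi$ strongly in $L^2$; combined with $\chi_{A_n}\rightharpoonup\phi$ the weak--strong pairing gives $\ab{\chi_{A_n},T_W\chi_{A_n}}_{L^2(I)}\to\ab{\phi,T_W\phi}_{L^2(I)}$. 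Hence $0=\lim_n e_W(A_n,A_n^c)=\ab{\phi,T_W(\constone-\phi)}_{L^2(I)}=\int_{I^2}\phi(x)(1-\phi(y))W(x,y)\,dx\,dy$.

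Since the integrand is nonnegative, $\phi(x)(1-\phi(y))W(x,y)=0$ a.e., i.e. $W=0$ a.e. on $\{\phi>0\}\times\{\phi<1\}$ and, by symmetry of $W$, also on $\{\phi<1\}\times\{\phi>0\}$. For $t\in(0,1)$ set $S_t=\phi^{-1}(t,\infty)$; then $S_t\subseteq\{\phi>0\}$ and $S_t^c\subseteq\{\phi<1\}$, so $e_W(S_t,S_t^c)=0$, and connectedness of $W$ forces $\mu_L(S_t)\in\{0,1\}$. Together with $0\le\phi\le 1$ a.e. this verifies the hypothesis of Lemma \ref{ess-const-graphon} for $\phi$, so $\phi$ equals a constant $c$ a.e. But then $c=\int_I\phi\,d\mu_L\in[a,1-a]\subseteq(0,1)$, hence $c(1-c)>0$, and $\int_{I^2}\phi(x)(1-\phi(y))W(x,y)\,dx\,dy=c(1-c)\vol_W(I)>0$ since $d_W>0$ a.e. by connectedness --- contradicting the previous paragraph. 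So some $b>0$ does work.

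The main obstacle is the passage to the limit of the bilinear quantity $e_W(A_n,A_n^c)$: its ``diagonal'' dependence on $\chi_{A_n}$ is not continuous for the weak topology alone, and this is exactly where compactness of $T_W$ (equivalently, the fact that $W$, being bounded on $I^2$, is Hilbert--Schmidt) is essential, as it promotes one of the two weak convergences to a strong one. A secondary point that needs care is that the weak limit $\phi$ need not be an indicator function, which is why the endgame is phrased via level sets and Lemma \ref{ess-const-graphon} rather than by directly extracting a cutting set from the $A_n$.
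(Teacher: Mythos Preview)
The paper does not supply its own proof of this lemma; it is stated with a bare citation to \cite[Lemma~7]{bbcr2010} and then used as a black box in the alternate proof of Proposition~\ref{connectedpositivecheeger}. So there is no in-paper argument to compare against.

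Your proof is correct. The passage to the limit of the quadratic term $\ab{\chi_{A_n},T_W\chi_{A_n}}_{L^2(I)}$ is exactly the delicate point, and your use of compactness of $T_W$ to convert one weak convergence to a strong one handles it cleanly. The endgame via level sets and Lemma~\ref{ess-const-graphon} is also correct; one cosmetic remark is that when you restrict to $t\in(0,1)$ and then say ``together with $0\le\phi\le1$ a.e.\ this verifies the hypothesis'', the case $t=0$ is not literally covered by the boundedness alone, but it follows by the same inclusion argument you give for $t\in(0,1)$ since $\{\phi=0\}\subseteq\{\phi<1\}$ a.e. This is not a gap.

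It is worth noting that your argument stays entirely inside the functional-analytic framework the paper has already developed (compactness of $T_W$ from \cite[Section~7.5]{lovasz_large_networks}, the level-set lemma~\ref{ess-const-graphon}), so it would make the paper's second proof of Proposition~\ref{connectedpositivecheeger} self-contained rather than relying on an external reference.
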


{\bf Alternate proof} of Proposition \ref{connectedpositivecheeger} using Lemma \ref{connected graphons does not admit an ab cut}:\\
	Let $W$ be a graphon with $d_W(x)\geq \varepsilon$ for all $x$.
	We prove the non-trivial direction.
	Assume $W$ is connected.
	We will show that $h_W>0$.
	Assume on the contrary that $h_W=0$.
	Then for each $n\geq 1$ there is a measurable subset $A_n$ of $I$ with $0<\mu_L(A_n)< 1$ such that $h_W(A_n)< 1/n$.
	After passing to a subsequence, there are two cases to consider.

	Case 1: $\mu_L(A_n)\to 0$ as $n\to \infty$.

	In this case we have for large $n$ that
	\begin{align}
		 \begin{split}
		 	h_W(A_n) &= \frac{e(A_n, A_n^c)}{\vol(A_n)} = \frac{\vol(A_n) - \int_{A_n\times A_n} W}{\vol(A_n)}\\
			&= 1 - \frac{\int_{A_n\times A_n} W}{\vol(A_n)} \geq 1- \frac{\mu_L(A_n)^2}{\varepsilon \mu_L(A_n)}\\
			&\geq 1-\mu_L(A_n)/\varepsilon
		 \end{split}
	\end{align}
	But this contradicts the assumption that $h_W(A_n)< 1/n$ for all $n$.

	Case 2: $\mu_L(A_n)\to t$ for some $t>0$.

	Let $0<a< 1/2$ be such that $a< t< 1-a$.
	Now by Lemma \ref{connected graphons does not admit an ab cut}, there is a $b>0$ such that $W$ admits no $(a, b)$-cut.
	Therefore for all $n$ large enough we have
	\begin{align}
		 \begin{split}
			  h_W(A_n) &= \frac{e(A_n, A_n^c)}{\min\set{\vol(A_n), \vol(A_n^c)}}\\
						&\geq \frac{b}{\varepsilon \min\set{ \mu_L(A_n), \mu_L(A_n^c)}}\geq \frac{b}{\varepsilon (1-a)}
		 \end{split}
	\end{align}
	which again contradicts the assumption that $h_W(A_n)< 1/n$ for all $n$.
\hfill $\blacksquare$

%
%
%

%


%
%
%

%
%
%
%
\bibliographystyle{alpha}
\bibliography{Cheeger.bib}

\end{document}